% !TeX spellcheck = en_GB
\documentclass[a4paper,onecolumn,english]{article}

\usepackage[T1]{fontenc}
\usepackage[utf8]{inputenc}
\usepackage{babel}
\usepackage{pdflscape}
\usepackage{enumitem}
\usepackage{amsmath}
\usepackage{amsfonts}
\usepackage{amssymb}
\usepackage{amsthm}
\usepackage{thmtools}
\usepackage{dsfont}
\usepackage{bm}
\usepackage{relsize}
\usepackage{adjustbox}
\usepackage{graphicx}
\usepackage[title]{appendix}
\usepackage[left=2.5cm,right=2.5cm,top=2.5cm,bottom=2.5cm]{geometry}
\usepackage[colorlinks=true,
            linkcolor = blue,
            urlcolor  = blue,
            citecolor = red,
			breaklinks]{hyperref}
%\usepackage{refcheck}

%%%%%%%%%%%%%%%%%%%%%%%

\linespread{1.15}
\allowdisplaybreaks

\numberwithin{equation}{section}

%%%%%%%%%%%%%%%%%%%%%%%

\makeatletter
	\let\save@mathaccent\mathaccent
	\newcommand*\if@single[3]{%
	  \setbox0\hbox{${\mathaccent"0362{#1}}^H$}%
	  \setbox2\hbox{${\mathaccent"0362{\kern0pt#1}}^H$}%
	  \ifdim\ht0=\ht2 #3\else #2\fi
	  }
	%The bar will be moved to the right by a half of \macc@kerna, which is computed by amsmath:
	\newcommand*\rel@kern[1]{\kern#1\dimexpr\macc@kerna}
	%If there's a superscript following the bar, then no negative kern may follow the bar;
	%an additional {} makes sure that the superscript is high enough in this case:
	\newcommand*\widebar[1]{\@ifnextchar^{{\wide@bar{#1}{0}}}{\wide@bar{#1}{1}}}
	%Use a separate algorithm for single symbols:
	\newcommand*\wide@bar[2]{\if@single{#1}{\wide@bar@{#1}{#2}{1}}{\wide@bar@{#1}{#2}{2}}}
	\newcommand*\wide@bar@[3]{%
	  \begingroup
	  \def\mathaccent##1##2{%
	%Enable nesting of accents:
	    \let\mathaccent\save@mathaccent
	%If there's more than a single symbol, use the first character instead (see below):
	    \if#32 \let\macc@nucleus\first@char \fi
	%Determine the italic correction:
	    \setbox\z@\hbox{$\macc@style{\macc@nucleus}_{}$}%
	    \setbox\tw@\hbox{$\macc@style{\macc@nucleus}{}_{}$}%
	    \dimen@\wd\tw@
	    \advance\dimen@-\wd\z@
	%Now \dimen@ is the italic correction of the symbol.
	    \divide\dimen@ 3
	    \@tempdima\wd\tw@
	    \advance\@tempdima-\scriptspace
	%Now \@tempdima is the width of the symbol.
	    \divide\@tempdima 10
	    \advance\dimen@-\@tempdima
	%Now \dimen@ = (italic correction / 3) - (Breite / 10)
	    \ifdim\dimen@>\z@ \dimen@0pt\fi
	%The bar will be shortened in the case \dimen@<0 !
	    \rel@kern{0.6}\kern-\dimen@
	    \if#31
	      \overline{\rel@kern{-0.6}\kern\dimen@\macc@nucleus\rel@kern{0.4}\kern\dimen@}%
	      \advance\dimen@0.4\dimexpr\macc@kerna
	%Place the combined final kern (-\dimen@) if it is >0 or if a superscript follows:
	      \let\final@kern#2%
	      \ifdim\dimen@<\z@ \let\final@kern1\fi
	      \if\final@kern1 \kern-\dimen@\fi
	    \else
	      \overline{\rel@kern{-0.6}\kern\dimen@#1}%
	    \fi
	  }%
	  \macc@depth\@ne
	  \let\math@bgroup\@empty \let\math@egroup\macc@set@skewchar
	  \mathsurround\z@ \frozen@everymath{\mathgroup\macc@group\relax}%
	  \macc@set@skewchar\relax
	  \let\mathaccentV\macc@nested@a
	%The following initialises \macc@kerna and calls \mathaccent:
	  \if#31
	    \macc@nested@a\relax111{#1}%
	  \else
	%If the argument consists of more than one symbol, and if the first token is
	%a letter, use that letter for the computations:
	    \def\gobble@till@marker##1\endmarker{}%
	    \futurelet\first@char\gobble@till@marker#1\endmarker
	    \ifcat\noexpand\first@char A\else
	      \def\first@char{}%
	    \fi
	    \macc@nested@a\relax111{\first@char}%
	  \fi
	  \endgroup
	}
	
	%% The "\@seccntformat" command is an auxiliary command
	%% (see pp. 26f. of 'The LaTeX Companion,' 2nd. ed.)
	\def\@seccntformat#1{\@ifundefined{#1@cntformat}%
	   {\csname the#1\endcsname\quad}  % default
	   {\csname #1@cntformat\endcsname}% enable individual control
	}
	\let\oldappendix\appendix %% save current definition of \appendix
	\renewcommand\appendix{%
	    \oldappendix
	    \newcommand{\section@cntformat}{\appendixname$:\;$~\thesection}
	}
\makeatother

%%%%%%%%%%%%%%%%%%%%%%%

\newcommand{\eps}{\varepsilon}
\renewcommand{\Re}{\mathrm{Re}\,}
\newcommand{\mb}[1]{\mathbf{#1}}
\newcommand*{\ccdot}{\kern-.12em\cdot\kern-.12em}
\newcommand{\supp}{\mathrm{supp}}
\newcommand{\varrow}{\overset{v}{\longrightarrow}}

\newcommand{\convccirc}[1]{\mathbin{\mathop{\mathsmaller{\circledcirc}}\limits_{\raisebox{3.5pt}{\scalebox{0.75}{$#1$}}}}}
\newcommand\restrict[1]{\raisebox{-.5ex}{$|$}_{#1}}

%%%%%%%%%%%%%%%%%%%%%%%

\newtheoremstyle{slplain}% name
  {0.4cm}% Space above
  {0.4cm}% Space below
  {\upshape}% Body font
  {}%Indent amount (empty = no indent, \parindent = para indent)
  {\bfseries}%  Thm head font
  {.}%       Punctuation after thm head
  { }%      Space after thm head: " " = normal interword space;
  {}%       Thm head spec
  
\newtheoremstyle{itplain}% name
    {0.4cm}% Space above
    {0.4cm}% Space below
    {\itshape}% Body font
    {}%Indent amount (empty = no indent, \parindent = para indent)
    {\bfseries}%  Thm head font
    {.}%       Punctuation after thm head
    { }%      Space after thm head: " " = normal interword space;
    {}%       Thm head spec
  
\declaretheorem[style=slplain,numberwithin=section]{definition}
\declaretheorem[style=slplain,sibling=definition]{example}
\declaretheorem[style=slplain,sibling=definition]{remark}

\declaretheorem[style=itplain,sibling=definition]{theorem}

\declaretheorem[style=itplain,sibling=definition]{proposition}

\declaretheorem[style=itplain,sibling=definition]{corollary}

%%%%%%%%%%%%%%%%%%%%%%%

\AtBeginDocument{
	\addtolength{\abovedisplayskip}{0.125ex}
	\addtolength{\abovedisplayshortskip}{0.125ex}
	\addtolength{\belowdisplayskip}{0.125ex}
	\addtolength{\belowdisplayshortskip}{0.125ex}
}

\renewenvironment{abstract}{%
\noindent\hfill\begin{minipage}{0.92\textwidth}
\rule{\textwidth}{1pt}}
{\par\noindent\rule{\textwidth}{1pt}\end{minipage}\hfill}

\let\OLDthebibliography\thebibliography
\renewcommand\thebibliography[1]{
  \OLDthebibliography{#1}
  \setlength{\parskip}{0pt}
  \setlength{\itemsep}{3pt plus 0.3ex}
}

%%%%%%%%%%%%%%%%%%%%%%%

\title{\bfseries On the construction of convolution-like operators associated with multidimensional diffusion processes\\[0.3cm]}

\author{Rúben Sousa
\thanks{CMUP, Departamento de Matemática, Faculdade de Ciências, Universidade do Porto, Rua do Campo Alegre 687, 4169-007 Porto, Portugal. Email: \texttt{rubensousa@fc.up.pt}}
\and
Manuel Guerra \thanks{Corresponding author. ISEG – School of Economics and Management, Universidade de Lisboa; REM – Research in Economics and Mathematics, CEMAPRE, Rua do Quelhas 6, 1200-781 Lisbon, Portugal. Email: \texttt{mguerra@iseg.ulisboa.pt}}
\and
Semyon Yakubovich \thanks{CMUP, Departamento de Matemática, Faculdade de Ciências, Universidade do Porto, Rua do Campo Alegre 687, 4169-007 Porto, Portugal. Email: \texttt{syakubov@fc.up.pt}}
\\[0.3cm]
}
\date{\today\\}

\begin{document}

\maketitle

\begin{abstract}
	 \small
	 \parbox{\linewidth}{\vspace{-2pt}
\begin{center} \bfseries Abstract \vspace{-8pt} \end{center}
\-\ \quad 
When is it possible to interpret a given Markov process as a Lévy-like process?
Since the class of Lévy processes can be defined by  the relation between transition probabilities and convolutions, the answer to this question lies in the existence of a convolution-like operator satisfying the same relation with the transition probabilities of the process.
It is known that the so-called Sturm-Liouville convolutions have the desired properties and therefore the question above has a positive answer for a certain class of one-dimensional diffusions. 
However, more general processes have never been systematically treated in the literature.
	 	
\-\ \quad This study addresses this gap by considering the general problem of constructing a convolution-like operator for a given strong Feller process on a general locally compact metric space.
Both necessary and sufficient conditions for the existence of such convolution-like structures are determined, which reveal a connection between the answer to the above question and certain analytical and geometrical properties of the eigenfunctions of the transition semigroup.
	 	
The case of reflected Brownian motions on bounded domains of $\mathbb R^d$ and compact Riemannian manifolds is considered in greater detail:
various special cases are analysed, and a general discussion on the existence of appropriate convolution-like structures is presented.
	 
\vspace{5pt}

\-\ \quad \textbf{Keywords:}
Convolution-like operator, diffusion process, Lévy process, eigenfunction expansion, Neumann Laplacian, product formula. \vspace{6.5pt}
     }
\end{abstract}

\vspace{8pt}

\begingroup
\let\clearforchapter\relax
\section{Introduction}
\endgroup

We start by recalling a basic fact: \emph{if $\{X_t\}_{t \geq 0}$ is a Brownian motion on $\mathbb{R}^d$, then there exists a bilinear operator $\diamond$ on the space of probability measures on $\mathbb{R}^d$ such that the law of $X_t$ can be written as
\begin{equation} \label{eq:intro_brownianlevy}
P\bigl[X_t \in dy | X_s = x\bigr] = (\mu_{t-s} \diamond \delta_x)(dy), \qquad 0 \leq s \leq t,\; x \in \mathbb{R}^d
\end{equation}
where the measures $\mu_t$ satisfy the semigroup property $\mu_{t+s} = \mu_t \diamond \mu_s$}. The operator $\diamond$ is, of course, the ordinary convolution $(\mu \diamond \nu)(B) := \int_{\mathbb{R}^d} \int_{\mathbb{R}^d} \delta_{x+y}(B)\mu(dx) \nu(dy)$, and the identity \eqref{eq:intro_brownianlevy} is the convolution semigroup property of Brownian motion, i.e.\ it expresses the fact that the Brownian motion is a Lévy process. We can therefore interpret the class of Lévy processes as the natural generalization of the Brownian motion into a family of processes which share the same divisibility property. Remarkably, this family admits a complete characterization: this is the content of the famous Lévy-Khintchine theorem. 

Suppose now that $\{X_t\}_{t \geq 0}$ is a Feller process on a general locally compact metric space $E$. In light of the property stated above, one may ask: \emph{is it possible to construct an operator $\diamond$ on the space of probability measures on $E$ such that the law of $\{X_t\}$ admits the convolution-like semigroup representation \eqref{eq:intro_brownianlevy}?} 
An affirmative answer would show that the given process can be embedded into a family of processes characterized by a natural analogue of the notions of stationarity and independence.
In addition, it is reasonable to expect that such a convolution-like operator will have properties which allow us to develop analogues of the basic concepts and results in harmonic analysis. 
If so, and if the process' infinitesimal generator is an elliptic differential operator $\mathcal{L}$, then the convolution-like structure becomes a valuable tool for studying elliptic and parabolic differential equations determined by the operator $\mathcal{L}$.

The case of one-dimensional diffusion processes -- whose generators are Sturm-Liouville operators $\ell(u) := {1 \over r} \bigl( -(pu')' + qu \bigr)$ on intervals of the real line -- has been addressed by many authors \cite{achourtrimeche1979,bloomheyer1994,chebli1974,koornwinder1984,levitan1960,rentzschvoit2000,zeuner1992}. It turns out that the crucial ingredient for constructing an associated convolution-like operator is the existence of a family of probability measures $\bm{\nu}_{x,y}$ such that the product formula $w_\lambda(x) \, w_\lambda(y) = \int w_\lambda \, d\bm{\nu}_{x,y}$ holds for the solutions $w_\lambda$\, ($\lambda \geq 0$) of the Sturm-Liouville equation $\ell(u) = \lambda u$ with Neumann boundary condition. The existence of such product formulas for a large class of Sturm-Liouville operators has been established via a partial differential equation technique \cite{rentzschvoit2000,sousaetal2019b,sousaetal2020}. The resulting \emph{Sturm-Liouville convolutions} lead to a better understanding of the mapping properties of the eigenfunction expansion determined by the Sturm-Liouville operator, and they constitute a natural environment for studying infinite divisibility, Lévy-like processes and other notions from probabilistic harmonic analysis. In particular, the convolution-like semigroup representation \eqref{eq:intro_brownianlevy} holds for the corresponding one-dimensional diffusions.

The existing theory on convolution-like operators associated with stochastic processes is mostly limited to one-dimensional diffusions, but the question formulated above is meaningful for a much more general class of stochastic processes.
This paper provides a general discussion of the construction of convolutions for strong Feller processes on a locally compact separable metric space. 
Unlike previous studies, where applications to stochastic processes were regarded as a by-product of a pre-existing convolution-like structure \cite{borowieckaolszewska2015,heyerpap2010,rentzschvoit2000}, here the inclusion of a given process in some class of L\'evy-like processes is the main motivation.

This paper is organized as follows:
Section 2, contains the definition of Feller-L\'evy trivializable convolutions, i.e., the basic properties which an operator $\diamond$ should satisfy to fulfil the requirements outlined above.
It also contains some key examples of diffusions (namely Brownian motions on various spaces) which are within the scope of our general discussion, and an overview of some applications to harmonic analysis which can be derived from the existence of a Feller-L\'evy trivializable convolution.
In Section \ref{sec:characterizations} we derive both necessary and sufficient conditions relating the existence of the convolution structure with certain properties of the eigenfunctions of the generator of the Feller semigroup, such as existence of a common maximizer or positivity of a regularized product formula kernel.
Most of this analysis is focused on Feller semigroups on locally compact metric spaces whose spectrum is discrete.
In the subsequent Section \ref{sec:onedimensional} we digress into the (one-dimensional) case of semigroups generated by Sturm-Liouville operators so as to show that similar results hold without discreteness assumptions on the spectrum. Section \ref{sec:ultrahyperbolic} is devoted to the relation between convolution-like stuctures and (ultra)hyperbolic boundary value problems.

In Section \ref{sec:nonexist_multidim} we discuss the existence of common maximizers for the eigenfunctions of multidimensional diffusions. Some examples are presented which illustrate that one should not expect this property to hold unless the state space has a special geometric configuration. We then prove, using standard results on spectral theory of differential operators, that the common maximizer property does not hold for reflected Brownian motions on smooth domains of $\mathbb{R}^d$ ($d \geq 2$) or on compact Riemannian manifolds; this leads to a nonexistence theorem for convolutions on such domains. \clearpage

\section{Background} \label{sec:background}

Throughout the paper, the notations $\mathcal{M}_\mathbb{C}(E)$, $\mathcal{M}_+(E)$ and $\mathcal{P}(E)$ stand for, respectively, the spaces of finite complex, finite positive, and probability measures on a measurable space $E$. We write $\mu_n \varrow \mu$ to indicate that the sequence $\mu_n$ converges to $\mu$ in the vague topology of measures. The Dirac measure at a point $x$ is denoted by $\delta_x$, and the total variation norm of a measure $\mu \in \mathcal{M}_\mathbb{C}(E)$ is denoted by $\|\mu\|$. The ball centred at $x$ with radius $\eps > 0$ is denoted by $\mathds{B}(x;\eps)$. We denote by $\mathrm{C}(E)$ the space of continuous functions on $E$; $\mathrm{C}_\mathrm{b}(E)$ the subspace of bounded continuous functions; $\mathrm{C}_0(E)$ the subspace of functions vanishing at infinity; $\mathrm{C}_\mathrm{c}(E)$ the subspace of compactly supported functions; $\mathrm{C}^k(E)$ the space of functions which are $k$ times continuously differentiable; and $\mathrm{C}_\mathrm{c}^k(E)$ stands for the intersection $\mathrm{C}_\mathrm{c}(E) \cap \mathrm{C}^k(E)$. We also denote by $\mathrm{B}_\mathrm{b}(E)$ the space of bounded measurable (complex-valued) functions on $E$, provided with the supremum norm $\| f \|_\infty = \sup \{ |f(x)|: x \in E\}$. For $p \in [1,+\infty]$, $L^p(E,\mu)$ denotes the Lebesgue space of complex-valued functions on $E$, with the usual norm $\| \cdot \|_{L_p(E,\mu)}$.

\subsection{The notion of Feller-Lévy trivializable convolution}

Since the seminal works of Delsarte \cite{delsarte1938} and Levitan \cite{levitan1940}, many axiomatic convolution-like structures -- such as generalized convolutions, generalized translations, hypercomplex systems and hypergroups -- have been proposed with the aim of identifying the essential features which allow one to derive analogues of the basic facts of classical harmonic analysis \cite{berezansky1998,bloomheyer1994,litvinov1987,urbanik1964}.
In this paper we introduce and study the following notion of convolution-like structure:

\begin{definition} \label{def:fltc_fltcdef}
Let $E$ be a locally compact separable metric space, and let $\{T_t\}_{t \geq 0}$ be a strong Feller semigroup on $E$. We say that a bilinear operator $\diamond$ on $\mathcal{M}_\mathbb{C}(E)$ is a \emph{Feller-Lévy trivializable convolution} (FLTC) for $\{T_t\}$ if the following conditions hold:
\begin{enumerate}[itemsep=1.5pt]
\item[\textbf{I.}] $(\mathcal{M}_\mathbb{C}(E), \diamond)$ is a commutative Banach algebra over $\mathbb{C}$ (with respect to the total variation norm) with identity element $\delta_a$ ($a \in E$), and $(\mu,\nu) \mapsto \mu \diamond \nu$ is continuous in the weak topology of measures;
\item[\textbf{II.}] $\mathcal{P}(E) \diamond \mathcal{P}(E) \subset \mathcal{P}(E)$;
\item[\textbf{III.}] There exists a family $\Theta \subset \mathrm{B}_\mathrm{b}(E) \setminus \{0\}$ such that
\[ 
\mu = \mu_1 \diamond \mu_2 \qquad \text{if and only if} \qquad \mu(\vartheta) = \mu_1(\vartheta) \ccdot \mu_2(\vartheta) \; \text{for all } \vartheta \in \Theta ,
\]
where $\mu(\vartheta) := \int_E \vartheta(\xi) \mu(d\xi)$;
\item[\textbf{IV.}] The transition kernel $\{p_{t,x}\}_{t \geq 0, x \in E}$ of the semigroup $\{T_t\}$ is of the form
\begin{equation} \label{eq:fltc_convsemigrprop}
p_{t,x} = \gamma_t \diamond \delta_x,
\end{equation}
where $\{\gamma_t\}_{t \geq 0} \subset \mathcal{P}(E)$ is a family of measures such that $\gamma_{t+s} = \gamma_t \diamond \gamma_s$ for all $t, s \geq 0$.
\end{enumerate}
\end{definition}

Conditions I and II in the above definition can be seen as basic axioms allowing us to interpret $(\mathcal{M}_\mathbb{C}(E), \diamond)$ as a probability-preserving convolution-like structure.
Condition III requires the existence of an integral transform with bounded kernels which determines uniquely a given measure $\mu \in \mathcal{M}_\mathbb{C}(E)$ (in the sense that if $\mu(\vartheta) = \nu(\vartheta)$ for all $\vartheta \in \Theta$, then $\mu = \nu$) and trivializes the convolution in the same way as the Fourier transform trivializes the ordinary convolution.
As noted in \cite{volkovich1989}, it is possible, in principle, to study infinite divisibility of probability measures on measure algebras not satisfying Condition III;
however, it is natural to require Condition III to hold, not only because, to the best of our knowledge, all known examples of convolution-like structures are constructed from a product formula of the form $\vartheta(x) \vartheta(y) = (\delta_x \diamond \delta_y)(\vartheta)$ (and therefore possess such a trivializing family of functions) but also because this trivialization property leads to a richer theory.
Lastly, condition IV expresses the probabilistic motivation mentioned above:
the Feller semigroup $\{T_t\}$ is conservative and has the convolution semigroup property with respect to the operator $\diamond$;
in other words, a Feller process $\{X_t\}_{t \geq 0}$ associated to $\{T_t\}$ is a Lévy process with respect to $\diamond$, in the sense that we have $P\bigl[X_t \in \bm{\cdot} \mskip0.5\thinmuskip | X_s = x\bigr] = \gamma_{t-s} \diamond \delta_x$ for every $0 \leq s \leq t$ and $x \in E$.

\subsection{Feller semigroups on locally compact metric spaces: key examples}

The problem of existence of an associated FLTC is meaningful for any given strong Feller semigroup on a locally compact separable metric space.
In this subsection we introduce some basic examples of strong Feller semigroups which constitute model cases that should be kept in mind while examining the general results of Section \ref{sec:characterizations}.
This subsection also serves as a preparation for Section \ref{sec:nonexist_multidim}, where we will provide a definitive answer to whether it is possible to construct an FLTC for (reflected) Brownian motions on domains of $\mathbb{R}^d$ and Riemannian manifolds.

We begin by recalling some notions from the theory of Dirichlet forms.
Let $\mb{m}$ be a $\sigma$-finite measure on $E$. We say that $(\mathcal{E}, \mathcal{D}(\mathcal{E}))$ is a \emph{Dirichlet form} on $L^2(E,\mb{m})$ if $\mathcal{D}(\mathcal{E})$ is a dense subspace of $L^2(E,\mb{m})$ and $\mathcal{E}$ is a nonnegative, closed, Markovian symmetric sesquilinear form\label{not:sesquilinear2} defined on $\mathcal{D}(\mathcal{E}) \times \mathcal{D}(\mathcal{E})$. The associated non-positive self-adjoint operator $(\mathcal{G}^{(2)}, \mathcal{D}(\mathcal{G}^{(2)}))$ is defined as
\[
u \in \mathcal{D}(\mathcal{G}^{(2)}) \quad\; \text{ if and only if } \quad\; \exists \mskip0.5\thinmuskip \phi \in L^2(E,\mb{m}) \: \text{ such that }\: \mathcal{E}(u,v) = -\langle\phi, v\rangle_{L^2(E,\mb{m})} \text{ for all } v \in \mathcal{D}(\mathcal{E})
\]
and $\mathcal{G}^{(2)} u := \phi$ for $u \in \mathcal{D}(\mathcal{G}^{(2)})$. The semigroup determined by $\mathcal{E}$ is defined by $T_t^{(2)} := e^{t \mathcal{G}^{(2)}}$ (where the latter is obtained by spectral calculus); one can show \cite[Theorem 1.1.3]{chenfukushima2011} that $\{T_t^{(2)}\}$ is a strongly continuous, sub-Markovian contraction semigroup on $L^2(E,\mb{m})$. The Dirichlet form $(\mathcal{E}, \mathcal{D}(\mathcal{E}))$ is said to be \emph{strongly local} if $\mathcal{E}(u,v) = 0$ whenever $u \in \mathcal{D}(\mathcal{E})$ has compact support and $v \in \mathcal{D}(\mathcal{E})$ is constant on a neighbourhood of $\mathrm{supp}(u)$. It is said to be \emph{regular} if $\mathcal{D}(\mathcal{E}) \cap \mathrm{C}_\mathrm{c}(E)$ is dense both in $\mathcal{D}(\mathcal{E})$ with respect to the norm $\|u\|_{\mathcal{D}(\mathcal{E})} = \sqrt{\mathcal{E}(u,u) + \|u\|_{L^2(E,\mb{m})}}$ and in $\mathrm{C}_\mathrm{c}(E)$ with respect to the sup norm. A well-known result \cite[Theorem 7.2.1]{fukushimaetal2011} states that if $(\mathcal{E},\mathcal{D}(\mathcal{E}))$ is a regular Dirichlet form on $L^2(E,\mb{m})$ with semigroup $\{T_t^{(2)}\}_{t \geq 0}$, then there exists a Hunt process with state space $E$ whose transition semigroup $\{P_t\}_{t \geq 0}$ is such that $P_t u$ is, for all $u \in \mathrm{C}_\mathrm{c}(E)$, a quasi-continuous version of $T_t^{(2)} u$. (A \emph{Hunt process} is essentially a strong Markov process whose paths are right-continuous and quasi-left-continuous; for details we refer to \cite[Appendix A.2]{fukushimaetal2011}.) 

Now, we proceed with the announced examples.

\begin{example} \label{exam:fltc_feller_riemannian}
Let $(E,g)$ be a complete Riemannian manifold, let $\mb{m}$ be the Riemannian volume on $E$ and let $\nabla$ denote the Riemannian gradient on $(E,g)$. The sesquilinear form
\[
\mathcal{E}(u,v) = {1 \over 2} \int_E \langle \nabla u, \nabla v \rangle_g \, d\mb{m}, \qquad u, v \in \mathcal{D}(\mathcal{E})
\]
with domain
\[
\mathcal{D}(\mathcal{E}) = \text{closure of }  \mathrm{C}_\mathrm{c}^\infty(E) \text{ in the Sobolev space } H^1(E) \equiv \{u \in L^2(E,\mb{m}) \mid |\nabla u| \in L^2(E,\mb{m}) \}
\]
is a strongly local regular Dirichlet form on $L^2(E,\mb{m})$.
The Hunt process $\{X_t\}_{t \geq 0}$ with state space $E$ associated with this Dirichlet form is a \emph{Brownian motion on $(E,g)$}. One can show that the strongly continuous contraction semigroup $\{T_t\}$ determined by $\mathcal{E}$ is such that $T_t\bigl(\mathrm{B}_\mathrm{b}(E)\bigr) \subset \mathrm{C}_\mathrm{b}(E)$, so that the Brownian motion $\{X_t\}$ is a strong Feller process \cite[Section 6]{sturm1998a}. Moreover, it is shown in \cite[Example 5.7.2]{fukushimaetal2011} that the Feller semigroup $\{T_t\}$ is conservative provided that the Riemannian volume $\mb{m}$ is such that
\[
\liminf_{r \to \infty} {1 \over r^2} \log \mb{m}(\mathds{B}(x_0;r)) < \infty \text{ for some fixed } x_0 \in E.
\]
Let $\mathcal{G}: \mathcal{D}(\mathcal{G}) \subset \mathrm{C}_0(E) \longrightarrow \mathrm{C}_0(E)$ be the infinitesimal generator of the Brownian motion $\{X_t\}$. Then $\mathcal{G} u = {1 \over 2}\Delta u$ for $u \in \mathrm{C}_\mathrm{c}^\infty(E) \subset \mathcal{D}(\mathcal{G})$, where $\Delta$ is the Laplace-Beltrami operator on the Riemannian manifold $(E,g)$.
\end{example}

\begin{example} \label{exam:fltc_feller_euclidean}
Let $E = \mathbb{R}^d$,\, $m$ a positive function such that $m, {1 \over m} \in \mathrm{C}_\mathrm{b}(\mathbb{R}^d)$ and $A = (a_{jk})$ a symmetric $d \times d$ matrix-valued function such that $a_{jk} \in \mathrm{C}(\mathbb{R}^d)$ (for each $j,k \in \{1,\ldots,d\}$) and
\begin{equation} \label{eq:fltc_uniformelliptcond}
c^{-1} |\xi|^2 \leq \sum_{j,k=1}^d a_{jk}(x) \xi_j \xi_k \leq c |\xi|^2, \qquad (x,\xi) \in \mathbb{R}^d \times \mathbb{R}^d
\end{equation}
for some constant $c > 0$. The sesquilinear form
\[
\mathcal{E}(u,v) = {1 \over 2} \sum_{j,k=1}^d \int_{\mathbb{R}^d} a_{jk}(x) {\partial u \over \partial x_j} {\partial \overline{v\rule{0pt}{0.26\baselineskip}} \over \partial x_k} m(x) dx, \qquad u,v \in \mathcal{D}(\mathcal{E})
\]
with domain
\[
\mathcal{D}(\mathcal{E}) = \text{closure of } \mathrm{C}_\mathrm{c}^\infty(\mathbb{R}^d) \text{ under the inner product } \mathcal{E}(\bm{\cdot},\bm{\cdot}) + \langle \bm{\cdot},\bm{\cdot} \rangle_{L^2(\mb{m})}
\]
is a strongly local regular Dirichlet form on the space $L^2(\mb{m}) \equiv L^2(\mathbb{R}^d, m(x)dx)$ \cite[Section 3.1]{fukushimaetal2011}.
The Hunt process $\{X_t\}_{t \geq 0}$ associated with the Dirichlet form $\mathcal{E}$ is conservative \cite[Example 5.7.1]{fukushimaetal2011}.
The process $\{X_t\}$, which is called an \emph{$(A,m)$-diffusion} on $\mathbb{R}^d$, is a strong Feller process, cf.\ \cite[Example 4.C and Proposition 7.5]{sturm1998b}.
If, in addition, the functions ${\partial (m \mskip0.6\thinmuskip a_{jk}) \over \partial x_j} \mskip.7\thinmuskip$ ($j,k \in \{1,\ldots,d\}$) are locally square-integrable, then the infinitesimal generator $\mathcal{G}$ of the Feller semigroup is the elliptic operator $(\mathcal{G} u)(x) = {1 \over 2m(x)} \sum_{j,k=1}^d {\partial \over \partial x_j}\bigl( m(x) a_{jk}(x) {\partial u \over \partial x_k} \bigr)$\, (for $u \in \mathrm{C}_\mathrm{c}^2(\mathbb{R}^d) \subset \mathcal{D}(\mathcal{G})$).
\end{example}

\begin{example} \label{exam:fltc_feller_boundeddom}
Let $E$ be the closure of a bounded Lipschitz domain $\mathring{E} \subset \mathbb{R}^d$ and, as usual, let $H^k(E)$\, ($k \in \mathbb{N}$) be the Sobolev space\label{not:sobolev} defined as $H^k(E) := \{u \in L^2(E,dx) \mid \partial^\alpha u \in L^2(E,dx) \text{ for all } \alpha = (\alpha_1,\ldots,\alpha_d) \text{ with } |\alpha| \leq k\}$. Let $m \in H^1(E)$ be a positive function such that $m, {1 \over m} \in \mathrm{C}(E)$ and let $A = (a_{jk})$ be a symmetric bounded $d \times d$ matrix-valued function such that $a_{jk} \in H^1(E)$ for $j,k \in \{1,\ldots,d\}$ and the uniform ellipticity condition \eqref{eq:fltc_uniformelliptcond} holds for $(x,\xi) \in E \times \mathbb{R}^d$. The sesquilinear form
\[
\mathcal{E}(u,v) = {1 \over 2} \sum_{j,k=1}^d \int_{E} a_{jk}(x) {\partial u \over \partial x_j} {\partial \overline{v\rule{0pt}{0.26\baselineskip}} \over \partial x_k} m(x) dx, \qquad u,v \in \mathcal{D}(\mathcal{E}) = H^1(E)
\]
is a strongly local regular Dirichlet form on $L^2(E,\mb{m}) \equiv L^2(E,m(x)dx)$ whose associated Hunt process is a conservative Feller process, cf.\ \cite{chenzhang2014,chenfan2015}.
The process $\{X_t\}$ is called an \emph{$(A,m)$-reflected diffusion} on $E$. The infinitesimal generator $\mathcal{G}$ of the Feller process $\{X_t\}$ is such that $\mathrm{C}_\mathrm{c}^2(\mathring{E}) \subset \mathcal{D}(\mathcal{G})$ and $(\mathcal{G} u)(x) = {1 \over 2m(x)} \sum_{j,k=1}^d {\partial \over \partial x_j}\bigl( m(x) a_{jk}(x) {\partial u \over \partial x_k} \bigr)$ for $u \in \mathrm{C}_\mathrm{c}^2(\mathring{E})$. In the special case $a_{ij} = \delta_{ij}$ and $m = \mathds{1}$, the $(A,m)$-reflected diffusion is known as the \emph{reflected Brownian motion} on $E$, whose infinitesimal generator $\mathcal{G} u = {1 \over 2} \Delta u$ is the so-called \emph{Neumann Laplacian} on $E$.
\end{example}

\begin{example}
Let $E$ be a locally compact separable metric space with distance $\mb{d}$\label{not:distance2} and let $\mb{m}$ be a locally finite Borel measure on $E$ with $\mb{m}(U) > 0$ for all nonempty open sets $U \subset E$. 
Suppose that the triplet $(E,\mb{d},\mb{m})$ satisfies the \emph{measure contraction property} introduced in \cite[Definition 4.1]{sturm1998b}; roughly speaking, this means that there exists a family of quasi-geodesic maps connecting almost every pair of points $x,y \in E$ and which satisfy a contraction property which controls the distortions of the measure $\mb{m}$ along each quasi-geodesic. It was proved in \cite{sturm1998b} that the family of Dirichlet forms defined as
\[
\mathcal{E}^r(u,u) = {1 \over 2} \int_E \int_{\mathds{B}(x;r) \setminus \{x\}} \biggl|{u(z) - u(x) \over \mb{d}(z,x)}\biggr|^2 {\mb{m}(dz) \over \sqrt{\mb{m}(\mathds{B}(z;r))}} {\mb{m}(dx) \over \sqrt{\mb{m}(\mathds{B}(x;r))}}, \qquad r > 0
\]
(and $\mathcal{E}^r(u,v)$ defined via the polarization identity) converges as $r \downarrow 0$ (in a suitable variational sense, see \cite{sturm1998b}) to a strongly local regular Dirichlet form on $L^2(E,\mb{m})$. The associated Hunt process $\{X_t\}_{t \geq 0}$ is a strong Feller process with state space $E$.
If the growth of the volumes $\mb{m}(\mathds{B}(x;r))$ satisfies the condition stated in \cite[Theorem 5.7.3]{fukushimaetal2011}, then $\{X_t\}$ is conservative. This class of strong Feller processes includes, as particular cases, the diffusions of Examples \ref{exam:fltc_feller_riemannian} and \ref{exam:fltc_feller_euclidean} above, diffusions on manifolds with boundaries or corners, spaces obtained by gluing together manifolds, among others.
\end{example}

\subsection{Harmonic analysis with respect to Feller-Lévy trivializable convolutions}

The body of the paper is devoted to deriving necessary and sufficient conditions for the existence of an FLTC in a general framework that includes the examples given in the preceding subsection. As noted above, the motivation for this comes from the intention of extending the central concepts in the theory of harmonic analysis to a given Feller semigroup. Before proceeding to our main topic, we glance at some of the basic machinery that one can set up after having solved the problem of constructing an FLTC.

A Feller semigroup $\{T_t\}_{t \geq 0}$ on $E$ is said to be \emph{symmetric} with respect to a positive Borel measure $\mb{m}$ if $\int_E (T_t f)(x) \, g(x) \mskip0.5\thinmuskip \mb{m}(dx) = \int_E f(x) \, (T_t g)(x) \mskip0.5\thinmuskip \mb{m}(dx)$ for $f, g \in \mathrm{C}_\mathrm{c}(E)$. For simplicity, in this subsection we assume that the metric space $E$ is compact and the given strong Feller semigroup $\{T_t\}_{t \geq 0}$ is symmetric with respect to a finite measure $\mb{m} \in \mathcal{M}_+(E)$. Unless otherwise stated, we also assume that there exists an FLTC for the semigroup $\{T_t\}_{t \geq 0}$ whose trivializing family is $\Theta = \{\varphi_j\}_{j \in \mathbb{N}}$, where the $\varphi_j$ are real-valued eigenfunctions of the generator of $\{T_t\}$ and constitute a countable orthogonal basis of $L^2(E,\mb{m})$. (By the results in Section \ref{sec:characterizations}, we expect the trivializing family to be of this form whenever $E$ is compact.)

The following result ensures the existence of an invariant measure on the algebra $(\mathcal{M}_\mathbb{C}(E), \diamond)$ and summarizes the mapping properties of the convolution on the spaces $L^p(E,\mb{m})$.

\begin{proposition}
Let $\{T_t\}$ be a strong Feller semigroup on a compact space $E$, and let $\diamond$ be an FLTC for $\{T_t\}$. Let $(\mathcal{G}, \mathcal{D}(\mathcal{G}))$ be the generator of $\{T_t\}$.
\begin{enumerate}[itemsep=2pt]
\item[\textbf{(a)}] The identity $\delta_x \diamond \mb{m} = \mb{m}$ holds for all $x \in E$;
\item[\textbf{(b)}] Let $1 \leq p \leq \infty$ and $\mu \in \mathcal{M}_\mathbb{C}(E)$. If $f \in L^p(E,\mb{m})$, then the function $(\mathcal{T}^\mu f)(x) := \int_E f\, d(\mu \diamond \delta_x)$ is Borel measurable in $x \in E$ and satisfies $\|\mathcal{T}^\mu f\|_p \leq \|\mu\| \ccdot \|f\|_p$\, (where $\|\bm{\cdot}\|_p \equiv \|\bm{\cdot}\|_{L^p(E,\mb{m})}$).
\item[\textbf{(c)}] Let $p_1,p_2 \in [1, \infty]$ such that ${1 \over p_1} + {1 \over p_2} \geq 1$, and write $\mathcal{T}^x := \mathcal{T}^{\delta_x}$\, ($x \in E$). For $f \in L^{p_1}(E,\mb{m})$ and $h \in L^{p_2}(E,\mb{m})$, the $\diamond$-convolution 
\[
(f \diamond h)(x) = \int_M (\mathcal{T}^y f)(x)\, h(y)\, \mb{m}(dy)
\]
is well-defined and, for $r \in [1, \infty]$ defined by ${1 \over r} = {1 \over p_1} + {1 \over p_2} - 1$, it satisfies
\[
f \diamond h \in L^r(E,\mb{m}) \qquad \text{with} \qquad \| f \diamond h \|_r \leq \| f \|_{p_1} \| h \|_{p_2}.
\]
\item[\textbf{(d)}] The Banach space $L^1(E,\mb{m})$, equipped with the convolution multiplication $f \cdot h \equiv f \diamond h$, is a commutative Banach algebra without identity element.
\item[\textbf{(e)}] If $f \in \mathcal{D}(\mathcal{G})$ and $h \in L^1(E,\mb{m})$ then $\,f \diamond h \in \mathcal{D}(\mathcal{G})$ and $\:\mathcal{G} (f \diamond h) = (\mathcal{G} f) \diamond h$.
\end{enumerate}
\end{proposition}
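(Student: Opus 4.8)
The whole argument rests on the product formula for the trivializing family, so I would extract that first. Since $\delta_a$ is the identity of $(\mathcal{M}_\mathbb{C}(E),\diamond)$, Condition III forces $\varphi_j(a)=1$ for every $j$, and applying the ``only if'' direction of Condition III to $\nu_{x,y}:=\delta_x\diamond\delta_y$ gives $\int_E \varphi_j\,d\nu_{x,y}=\varphi_j(x)\varphi_j(y)$ for all $j$, with $\nu_{x,y}\in\mathcal{P}(E)$ by Condition II. From this I record two facts used throughout: the translation operators satisfy $\mathcal{T}^y\varphi_j=\varphi_j(y)\varphi_j$, so the real orthogonal family $\{\varphi_j\}$ diagonalizes each $\mathcal{T}^y$ and makes it self-adjoint on $L^2(E,\mb{m})$; and $\|\varphi_j\|_\infty=1$, since $|\varphi_j(x)|^2=|\int_E\varphi_j\,d\nu_{x,x}|\le\|\varphi_j\|_\infty$ forces $\|\varphi_j\|_\infty\le1$ while $\varphi_j(a)=1$ gives equality. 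Part (a) is then immediate: $(\delta_x\diamond\mb{m})(\varphi_j)=\varphi_j(x)\,\mb{m}(\varphi_j)$, and as the constant $\mathds{1}$ is the eigenfunction for eigenvalue $0$ and the remaining $\varphi_j$ are $\mb{m}$-orthogonal to it, $\mb{m}(\varphi_j)$ equals $\mb{m}(E)$ or $0$, so in both cases $\varphi_j(x)\,\mb{m}(\varphi_j)=\mb{m}(\varphi_j)$; Condition III yields $\delta_x\diamond\mb{m}=\mb{m}$. By bilinearity and weak continuity this upgrades to $\mu\diamond\mb{m}=\|\mu\|\,\mb{m}$ for $\mu\in\mathcal{M}_+(E)$, which feeds into (b).

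For part (b), the case $p=\infty$ follows at once from the Banach-algebra bound $\|\mu\diamond\delta_x\|\le\|\mu\|\,\|\delta_x\|=\|\mu\|$ of Condition I. For $p=1$ I would first take $\mu\in\mathcal{M}_+(E)$ and $f\ge0$: Fubini together with $\int_E(\mu\diamond\delta_x)\,\mb{m}(dx)=\mu\diamond\mb{m}=\|\mu\|\,\mb{m}$ gives $\int_E \mathcal{T}^\mu f\,d\mb{m}=\|\mu\|\int_E f\,d\mb{m}$, and decomposing $\mu$ and $f$ into their positive parts yields $\|\mathcal{T}^\mu f\|_1\le\|\mu\|\,\|f\|_1$. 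Measurability of $x\mapsto(\mathcal{T}^\mu f)(x)$ is clear for $f\in\mathrm{C}(E)$ from the weak continuity of $\diamond$, and the $L^1$-bound shows $\mu\diamond\delta_x\ll\mb{m}$ for $\mb{m}$-a.e.\ $x$, so the formula persists for all $f\in L^p$ by density; the intermediate range $1<p<\infty$ then follows by Riesz--Thorin interpolation between the $L^1$ and $L^\infty$ estimates.

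Part (c) is the generalized Young inequality and is the technical heart of the proposition. Symmetry of $\nu_{x,y}$ gives $(\mathcal{T}^yf)(x)=(\mathcal{T}^xf)(y)$, hence the two representations $(f\diamond h)(x)=\int_E f(y)\,(\mathcal{T}^yh)(x)\,\mb{m}(dy)=\int_E (\mathcal{T}^xf)(y)\,h(y)\,\mb{m}(dy)$. The first, via Minkowski's integral inequality and the contractivity $\|\mathcal{T}^yh\|_p\le\|h\|_p$ from (b), gives $\|f\diamond h\|_p\le\|f\|_1\|h\|_p$; the second, via H\"older and (b), gives $\|f\diamond h\|_\infty\le\|f\|_{p_1}\|h\|_{p_1'}$. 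Fixing $f$ and interpolating the linear map $h\mapsto f\diamond h$ between $L^1\to L^{p_1}$ and $L^{p_1'}\to L^\infty$ by Riesz--Thorin reproduces exactly the Young relation $\tfrac1{p_1}+\tfrac1{p_2}=1+\tfrac1r$, with operator norm $\le\|f\|_{p_1}$; the admissible parameter $\theta=1-\tfrac{p_1}{r}\in[0,1]$ exists precisely because $\tfrac1{p_1}+\tfrac1{p_2}\ge1$ forces $r\ge p_1$. This simultaneously secures the a.e.\ well-definedness of the convolution integral. I expect the genuine obstacle to live in (b)--(c): namely, passing rigorously from the abstract weakly continuous bilinear operator on measures to concrete a.e.-defined $L^p$ operators, which requires justifying that $\mu\diamond\delta_x\ll\mb{m}$ for a.e.\ $x$ and that $\diamond$ commutes with $\mb{m}$-integration; the remaining parts are then algebraic or spectral and routine.

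For part (d), the case $p_1=p_2=r=1$ of (c) gives submultiplicativity, so $(L^1(E,\mb{m}),\diamond)$ is a Banach algebra. Self-adjointness of $\mathcal{T}^y$ and Fubini yield the coefficient identity $\langle f\diamond h,\varphi_j\rangle=\langle f,\varphi_j\rangle\,\langle h,\varphi_j\rangle$ (where $\langle u,\varphi_j\rangle:=\int_E u\,\varphi_j\,d\mb{m}$), equivalently $(f\mb{m})\diamond(h\mb{m})=(f\diamond h)\mb{m}$; commutativity and associativity then descend from the measure algebra. An identity $e\in L^1$ would satisfy $\langle e,\varphi_j\rangle=1=\delta_a(\varphi_j)$ for all $j$, so the injectivity in Condition III would force $e\mb{m}=\delta_a$, impossible as $e\mb{m}\ll\mb{m}$ while $\delta_a$ is singular. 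Finally, part (e) combines the same coefficient identity with $\|\varphi_j\|_\infty=1$: for $f\in\mathcal{D}(\mathcal{G})$ and $h\in L^1$, the bound $|\langle h,\varphi_j\rangle|\le\|h\|_1\|\varphi_j\|_\infty=\|h\|_1$ gives $\sum_j\lambda_j^2\,|\langle f\diamond h,\varphi_j\rangle|^2/\|\varphi_j\|_2^2\le\|h\|_1^2\,\|\mathcal{G}f\|_2^2<\infty$, so $f\diamond h\in\mathcal{D}(\mathcal{G})$, and matching Fourier coefficients (both equal $-\lambda_j\langle f,\varphi_j\rangle\langle h,\varphi_j\rangle$) yields $\mathcal{G}(f\diamond h)=(\mathcal{G}f)\diamond h$.
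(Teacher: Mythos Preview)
Your argument is correct and, for part~(a), actually more direct than the paper's. The paper first identifies the regularized measure $\gamma_t\diamond\delta_x\diamond\delta_y$ with $q_t(x,y,\xi)\,\mb{m}(d\xi)$, computes $\int_E(\delta_x\diamond\delta_y)(f)\,\mb{m}(dy)$ as a limit $t\downarrow 0$, and uses the orthogonality $\int_E\varphi_j\,d\mb{m}=0$ ($j\ge2$) inside the $q_t$-series. You reach the same conclusion in one line by applying Condition~III to $\delta_x\diamond\mb{m}$ and the same orthogonality; the detour through $q_t$ buys nothing here. For parts~(b)--(e) the paper gives no argument at all, deferring to \cite{sousaetal2020} and \cite[Proposition~4.9]{sousaetal2019a}; your explicit Riesz--Thorin/Young and coefficient-identity arguments are precisely the standard hypergroup-style proofs one finds in those references.

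Two points deserve a second look. In~(d), the step ``$e\mb{m}=\delta_a$ is impossible since $e\mb{m}\ll\mb{m}$ while $\delta_a$ is singular'' tacitly assumes $\mb{m}(\{a\})=0$; this does hold (e.g.\ because $\varphi_j(a)=1$ and $\{\varphi_j/\|\varphi_j\|_2\}$ orthonormal force $\sum_j\|\varphi_j\|_2^{-2}=\infty$, so $\delta_a$ cannot have an $L^2$-density), but you should state it. In~(e), your spectral computation really proves the statement for the $L^2$-generator $\mathcal{G}^{(2)}$, not for the Feller generator $\mathcal{G}$ on $\mathrm{C}(E)$: you have not shown $f\diamond h\in\mathrm{C}(E)$ or that the difference quotient converges uniformly. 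A cleaner route is to observe that $f\diamond h=\mathcal{T}^{h\mb{m}}f$, that $T_t=\mathcal{T}^{\gamma_t}$ by Condition~IV, and that associativity/commutativity of $\diamond$ give $T_t\,\mathcal{T}^{h\mb{m}}=\mathcal{T}^{h\mb{m}}\,T_t$; since $\mathcal{T}^{h\mb{m}}$ is bounded on $\mathrm{C}(E)$ with norm $\le\|h\|_1$, the uniform convergence $\tfrac{1}{t}(T_tf-f)\to\mathcal{G}f$ passes through $\mathcal{T}^{h\mb{m}}$ and yields $\mathcal{G}(f\diamond h)=(\mathcal{G}f)\diamond h$ directly in $\mathrm{C}(E)$.
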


\begin{proof}
By the assumptions above we have $(T_t \varphi_j)(x) = e^{-\lambda_j t} \varphi_j(x)$, where $\lambda_j$ is the eigenvalue of the generator corresponding to the eigenfunction $\varphi_j$ (see the proof of Proposition \ref{prop:fltc_trivTheta_L2basis} below). Using this, one can check that for all $j$ we have $(\gamma_t \diamond \delta_x \diamond \delta_y)(\varphi_j) = \int_E \varphi_j(\xi) \, q_t(x,y,\xi) \, \mb{m}(d\xi)$, where
\[
q_t(x,y,\xi) := \sum_{j=1}^\infty {1 \over \|\varphi_j\|_2^2} e^{-\lambda_j t} \varphi_j(x) \, \varphi_j(y) \, \varphi_j(\xi)
\]
and thus the measures $(\gamma_t \diamond \delta_x \diamond \delta_y)(d\xi)$ and $q_t(x,y,\xi) \, \mb{m}(d\xi)$ coincide. Consequently, for $f \in \mathrm{C}(E)$ we have
\begin{align*}
\int_E (\delta_x \diamond \delta_y)(f) \, \mb{m}(dy) & = \lim_{t \downarrow 0} \int_E f(\xi) q_t(x,y,\xi) \mb{m}(d\xi) \, \mb{m}(dy) \\
& = \lim_{t \downarrow 0} \int_E q_t(x,y,\xi) \mb{m}(dy) \, f(\xi) \mb{m}(d\xi) = \int_E f(y) \mb{m}(dy)
\end{align*}
which proved part (a).

Parts (b)--(e) are proved by arguing as in \cite[Section 4]{sousaetal2020} and \cite[Proposition 4.9]{sousaetal2019a}.
\end{proof}

The basic notions of the theory of infinite divisibility of probability measures can be readily extended to the measure algebra determined by an FLTC:

\begin{definition}
Let $\{T_t\}$ be a strong Feller semigroup on a compact space $E$, and let $\diamond$ be an FLTC for $\{T_t\}$.
\begin{itemize}
\item The set $\mathcal{P}_{\mathrm{id}}$ of \emph{$\diamond$-infinitely divisible measures} is defined by
\[
\mathcal{P}_{\mathrm{id}} = \bigl\{ \mu \in \mathcal{P}(E) \bigm| \text{for all } n \in \mathbb{N} \text{ there exists } \nu_n \in \mathcal{P}(E) \text{ such that } \mu = (\nu_n)^{\mskip -0.5\thinmuskip \diamond n} \bigr\}
\]
where $(\nu_n)^{\mskip -0.5\thinmuskip \diamond n}$ denotes the $n$-fold $\diamond$-convolution of the measure $\nu_n$ with itself.

\item The \emph{$\diamond$-Poisson measure}\label{not:poisson_conic} associated with $\nu \in \mathcal{M}_+(E)$ is
\[
\mb{e}(\nu): = e^{-\|\nu\|} \sum_{n=0}^\infty {\nu^{\mskip -0.5\thinmuskip \diamond n} \over n!}
\]
(the infinite sum converging in total variation).

\item A measure $\mu \in \mathcal{P}(E)$ is called a \emph{$\diamond$-Gaussian measure} if $\mu \in \mathcal{P}_{\mathrm{id}}$ and
\[
\mu = \mb{e}(\nu) \diamond \vartheta \quad \bigl( \nu \in \mathcal{M}_+(E),\, \vartheta \in \mathcal{P}_{\mathrm{id}}\bigr) \qquad \implies \qquad \mb{e}(\nu) = \delta_a.
\]
\end{itemize}
\end{definition}

Like in the case of the ordinary convolution, there exists a one-to-one correspondence between $\diamond$-infinitely divisible measures and \emph{$\diamond$-convolution semigroups}, i.e.\ families $\{\mu_t\}_{t \geq 0} \subset \mathcal{P}(E)$ such that
\begin{equation} \label{eq:intro_convsemigr}
\mu_{t+s} = \mu_t \diamond \mu_s \;\;\text{ for all } t, s \geq 0, \qquad\;\; \mu_0 = \delta_a, \qquad\;\; t \longmapsto \mu_t \text{ is weakly continuous}.
\end{equation}
Indeed, we can state: \emph{if $\{\mu_t\}$ is a $\diamond$-convolution semigroup, then $\mu_t$ is (for each $t \geq 0$) a $\diamond$-infinitely divisible distribution; conversely, if $\mu$ is a $\diamond$-infinitely divisible distribution, then the semigroup $\{\mu_t\}$ defined by $\mu_t(f) = \mu^{\diamond t}(f)$ is the unique $\diamond$-convolution semigroup such that $\mu_1 = \mu$.} (The proof relies on an argument similar to that for the ordinary convolution, see \cite[Section 7]{sato1999}, \cite[Section 5.3]{bloomheyer1994}.)

The following Lévy-Khintchine type representation for infinitely divisible measures is proved in \cite{volkovich1988} (see also \cite[Subsection 10.2]{jewett1975}).

\begin{theorem} \label{thm:intro_levykhin}
Let $\{T_t\}$ be a strong Feller semigroup on a compact space $E$, and let $\diamond$ be a FLTC for $\{T_t\}$. Let $\mu \in \mathcal{P}_\mathrm{id}$ be a measure such that 
\begin{equation} \label{eq:intro_levykhin_hyp}
\delta_x \diamond \mu \neq \mu \quad \text{ for all } x \in E \setminus \{a\}.
\end{equation}
Then the trivializing integrals $\mu(\varphi_j)$ can be represented in the form
\begin{equation} \label{eq:intro_levykhin}
\mu(\varphi_j) = \alpha(\varphi_j) \exp\biggl( \int_{E \setminus \{a\}\!} \bigl( \varphi_j(x) - 1 \bigr) \nu(dx) \biggr) \qquad (j \in \mathbb{N})
\end{equation}
where $\nu$ is a $\sigma$-finite measure on $E \setminus \{a\}$ which is finite on the complement of any neighbourhood of $a$ and such that
\[
\int_{E \setminus \{a\}\!} \bigl( 1 - \varphi_j(x) \bigr) \nu(dx) < \infty \qquad (j \in \mathbb{N})
\]
and $\alpha$ is a $\diamond$-Gaussian measure.

Conversely, if $\nu$ is a $\sigma$-finite measure satisfying the stated conditions and $\alpha$ is a $\diamond$-Gaussian measure then there exists $\mu \in \mathcal{P}_\mathrm{id}$ satisfying \eqref{eq:intro_levykhin_hyp} and such that \eqref{eq:intro_levykhin} holds for all $j \in \mathbb{N}$.
\end{theorem}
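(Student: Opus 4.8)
The plan is to convert the statement into a computation of the trivializing transforms of a $\diamond$-convolution semigroup and then to extract a Lévy measure by a vague-limit procedure, in the spirit of \cite{volkovich1988,jewett1975}. First I would record the normalization $\varphi_j(a) = 1$: since $\delta_a$ is the identity of $\diamond$, Condition III applied to $\delta_a \diamond \mu = \mu$ gives $\varphi_j(a)\,\mu(\varphi_j) = \mu(\varphi_j)$ for every $\mu$, and choosing $\mu$ with $\mu(\varphi_j) \neq 0$ (such a $\mu$ exists, otherwise $\varphi_j \equiv 0$ by testing against Dirac measures) forces $\varphi_j(a) = 1$. Using the correspondence between $\diamond$-infinitely divisible measures and $\diamond$-convolution semigroups stated before the theorem, I would embed $\mu$ into a semigroup $\{\mu_t\}_{t \geq 0}$ with $\mu_0 = \delta_a$ and $\mu_1 = \mu$. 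For fixed $j$, Condition III shows that $g_j(t) := \mu_t(\varphi_j)$ is multiplicative, $g_j(t+s) = g_j(t)\,g_j(s)$, with $g_j(0) = 1$ and $g_j$ continuous (the $\varphi_j$ being continuous eigenfunctions and $t \mapsto \mu_t$ weakly continuous). Since $g_j(t) = \mu_{t/2}(\varphi_j)^2 \geq 0$ and $g_j$ cannot vanish (a zero at $t_0$ would give $g_j(t_0/n) = 0$ for all $n$, contradicting $g_j(0^+) = 1$), we obtain $g_j(t) = e^{t\eta_j}$ with $\eta_j = \log\mu(\varphi_j) \in \mathbb{R}$. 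The theorem thus reduces to the claim that $\eta_j$ decomposes as $\eta_j = \log\alpha(\varphi_j) + \int_{E\setminus\{a\}}(\varphi_j - 1)\,d\nu$.

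The heart of the argument is the construction of the Lévy measure as the vague limit $t^{-1}\mu_t|_{E\setminus\{a\}} \varrow \nu$ on the locally compact space $E\setminus\{a\}$. Fixing a neighbourhood $U$ of $a$, I would write $\eta_j = \lim_{t\downarrow 0} t^{-1}\int_E (\varphi_j - 1)\,d\mu_t$ (legitimate because $\mu_t(E) = 1$ and $\varphi_j(a) = 1$) and split the integral over $U$ and $E\setminus U$. Testing $t^{-1}\mu_t$ against functions in $\mathrm{C}_\mathrm{c}(E\setminus\{a\})$ and using this bound shows that the mass of $t^{-1}\mu_t$ on $E\setminus U$ stays bounded as $t\downarrow 0$, giving vague relative compactness. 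Hypothesis \eqref{eq:intro_levykhin_hyp} is precisely what excludes a nontrivial idempotent factor of $\mu$, and hence forces the limit $\nu$ to be unique and to admit $a$ as its only possible point of mass accumulation; this produces a $\sigma$-finite measure on $E\setminus\{a\}$ that is finite off every neighbourhood of $a$. The integrability $\int_{E\setminus\{a\}}(1 - \varphi_j)\,d\nu < \infty$ then follows from the same decomposition: the part over $E\setminus U$ is dominated by the finite mass $\nu(E\setminus U)$, while near $a$ the smallness of $1 - \varphi_j$ (continuity of $\varphi_j$ with $\varphi_j(a) = 1$) controls the contribution.

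With $\nu$ in hand, I would set $\log\alpha(\varphi_j) := \eta_j - \int_{E\setminus\{a\}}(\varphi_j - 1)\,d\nu$ and verify that the numbers $\alpha(\varphi_j)$ are the trivializing transforms of a genuine $\diamond$-Gaussian measure $\alpha$. The natural route is to exhibit $\alpha$ as the weak limit of the residual local part of $\mu_t$ obtained after removing the Poisson factor $\mb{e}(t\,\nu|_{E\setminus U})$; by construction this residual mass collapses onto $a$, so its only idempotent factor is $\delta_a$ and $\alpha$ carries no Poisson component, which is exactly the defining property of a $\diamond$-Gaussian measure. I expect this to be the main obstacle: one must show that the jump and local parts genuinely factor at the level of measures (not merely of the exponents $\eta_j$), that the residual part converges to a bona fide element of $\mathcal{P}_\mathrm{id}$ rather than degenerating, and that \eqref{eq:intro_levykhin_hyp} is indeed the correct non-degeneracy hypothesis guaranteeing this separation.

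For the converse I would reverse the construction. The clean computation is that for a finite $\nu$ the Poisson measure has transform $\mb{e}(\nu)(\varphi_j) = e^{-\|\nu\|}\sum_{n=0}^\infty \frac{\nu(\varphi_j)^n}{n!} = \exp\bigl(\int_{E\setminus\{a\}}(\varphi_j - 1)\,d\nu\bigr)$, using $\nu^{\diamond n}(\varphi_j) = \nu(\varphi_j)^n$ (iterated Condition III) and $\|\nu\| = \int 1\,d\nu$. Given a $\sigma$-finite $\nu$ with the stated properties and a $\diamond$-Gaussian $\alpha$, I would truncate to $\nu_\eps := \nu|_{E\setminus\mathds{B}(a;\eps)}$, form $\mu^{(\eps)} := \mb{e}(\nu_\eps)\diamond\alpha$, and let $\eps\downarrow 0$. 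The condition $\int(1 - \varphi_j)\,d\nu < \infty$ guarantees $\mb{e}(\nu_\eps)(\varphi_j) \to \exp\bigl(\int(\varphi_j - 1)\,d\nu\bigr)$ for each $j$, and by Condition III together with the weak continuity of $\diamond$ from Condition I the family $\mu^{(\eps)}$ converges weakly to a measure $\mu$ whose transforms satisfy \eqref{eq:intro_levykhin}; infinite divisibility and property \eqref{eq:intro_levykhin_hyp} are then read directly off the representation.
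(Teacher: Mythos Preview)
The paper does not supply its own proof of this theorem: it states the result and refers the reader to \cite{volkovich1988} (and \cite[Subsection~10.2]{jewett1975}) for the argument. Your sketch---embedding $\mu$ in a $\diamond$-convolution semigroup, extracting the L\'evy measure as the vague limit of $t^{-1}\mu_t$ on $E\setminus\{a\}$, peeling off the compound-Poisson part, and identifying the remainder as Gaussian---is precisely the standard route to L\'evy--Khintchine formulae in abstract convolution algebras, and is the approach taken in those references. You have also correctly flagged the factorization of the Gaussian and Poisson components at the level of measures (not just of the exponents $\eta_j$) as the genuine technical hurdle; filling this in rigorously is where the machinery of \cite{volkovich1988} is actually needed.
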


The next proposition shows that the problem of constructing Feller semigroups associated with a given convolution measure algebra -- which is the converse of the problem studied in this paper -- has a straightforward solution. (The proof is straightforward and does not depend on the existence of an associated trivializing family, cf.\ e.g.\ \cite[Proposition 2.1]{rentzschvoit2000}.)

\begin{proposition} \label{prop:intro_converseprob}
Let $E$ be a compact metric space and let $\diamond$ be a bilinear operator on $\mathcal{M}_\mathbb{C}(E)$ satisfying conditions I and II of Definition \ref{def:fltc_fltcdef}. Let $\{\mu_t\} \subset \mathcal{P}(E)$ be a $\diamond$-convolution semigroup, i.e.\ a family of measures for which \eqref{eq:intro_convsemigr} holds. For $\nu \in \mathcal{M}^+(E)$, let $\mathcal{T}^\nu: \mathrm{C}(E) \longrightarrow \mathrm{C}(E)$ be the operator defined as
\[
(\mathcal{T}^\nu f)(x) := \int_E f\, d(\nu \diamond \delta_x).
\]
Then the family of operators $\{\mathcal{T}^{\mu_t}\}_{t \geq 0}$ constitutes a conservative Feller semigroup.
\end{proposition}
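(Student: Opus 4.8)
The plan is to verify directly that the family $\{\mathcal{T}^{\mu_t}\}_{t \geq 0}$ satisfies the defining properties of a conservative Feller semigroup on $\mathrm{C}(E)$: each $\mathcal{T}^{\mu_t}$ maps $\mathrm{C}(E)$ into itself and is positive and conservative (hence a contraction); the family obeys the semigroup law with $\mathcal{T}^{\mu_0} = \mathrm{Id}$; and $t \mapsto \mathcal{T}^{\mu_t}$ is strongly continuous. Throughout I would write $\mathcal{T}^{\mu_t} f(x) = (\mu_t \diamond \delta_x)(f)$ and use that, by Condition II, $\mu_t \diamond \delta_x \in \mathcal{P}(E)$ for every $t \geq 0$ and $x \in E$, since both factors are probability measures. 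Since all measures involved stay inside the compact metrizable set $\mathcal{P}(E)$, sequential weak arguments suffice.

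First I would dispose of the elementary properties. For the mapping property, fix $t$ and $f \in \mathrm{C}(E)$: if $x_n \to x$ then $\delta_{x_n} \to \delta_x$ weakly, so by the joint weak continuity of $\diamond$ (Condition I) we get $\mu_t \diamond \delta_{x_n} \to \mu_t \diamond \delta_x$ weakly, and evaluating at $f \in \mathrm{C}(E) = \mathrm{C}_\mathrm{b}(E)$ shows $\mathcal{T}^{\mu_t} f \in \mathrm{C}(E)$. Positivity is immediate because $\mu_t \diamond \delta_x$ is a positive measure; conservativeness follows from $\mathcal{T}^{\mu_t} \mathds{1}(x) = (\mu_t \diamond \delta_x)(E) = 1$, which together with positivity yields the contraction bound $\|\mathcal{T}^{\mu_t}\| \leq 1$. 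Finally $\mathcal{T}^{\mu_0} = \mathrm{Id}$ because $\mu_0 = \delta_a$ is the identity of $(\mathcal{M}_\mathbb{C}(E),\diamond)$, so $\mu_0 \diamond \delta_x = \delta_x$.

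The semigroup property is the first substantive point and rests on the integral identity
\[
(\sigma \diamond \rho)(f) = \int_E (\mathcal{T}^\sigma f)(y) \, \rho(dy), \qquad \sigma, \rho \in \mathcal{P}(E), \; f \in \mathrm{C}(E).
\]
To prove it I would approximate $\rho$ weakly by finitely supported measures $\rho_n = \sum_i c_i^{(n)} \delta_{y_i^{(n)}}$, which are weakly dense in $\mathcal{P}(E)$ for compact $E$; bilinearity gives $(\sigma \diamond \rho_n)(f) = \sum_i c_i^{(n)} (\mathcal{T}^\sigma f)(y_i^{(n)}) = \int_E (\mathcal{T}^\sigma f) \, d\rho_n$, and as $n \to \infty$ the left side converges by weak continuity of $\diamond$ while the right side converges because $\mathcal{T}^\sigma f \in \mathrm{C}(E)$. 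Applying this with $\sigma = \mu_s$ and $\rho = \mu_t \diamond \delta_x$, together with commutativity, associativity and the relation $\mu_{t+s} = \mu_t \diamond \mu_s$, will yield
\[
\mathcal{T}^{\mu_t}(\mathcal{T}^{\mu_s} f)(x) = (\mu_s \diamond \mu_t \diamond \delta_x)(f) = (\mu_{t+s} \diamond \delta_x)(f) = \mathcal{T}^{\mu_{t+s}} f(x).
\]

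The main obstacle will be strong continuity, namely upgrading the pointwise convergence $\mathcal{T}^{\mu_t} f(x) \to f(x)$ — which holds because $\mu_t \to \delta_a$ weakly forces $\mu_t \diamond \delta_x \to \delta_a \diamond \delta_x = \delta_x$ weakly — to convergence in the supremum norm. My approach is to establish joint continuity of $(t,x) \mapsto \mathcal{T}^{\mu_t} f(x) = (\mu_t \diamond \delta_x)(f)$ on $[0,\infty) \times E$: if $(t_n,x_n) \to (t_0,x_0)$, then $\mu_{t_n} \to \mu_{t_0}$ weakly (weak continuity of the convolution semigroup) and $\delta_{x_n} \to \delta_{x_0}$ weakly, so joint weak continuity of $\diamond$ gives $\mu_{t_n} \diamond \delta_{x_n} \to \mu_{t_0} \diamond \delta_{x_0}$ weakly and hence convergence of the integrals of $f$. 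Since $E$ is compact, joint continuity of this scalar function forces $t \mapsto \mathcal{T}^{\mu_t} f$ to be continuous into $(\mathrm{C}(E), \|\cdot\|_\infty)$, by a standard compactness argument: were it to fail, one could extract $t_n \to t_0$ and $x_n \to x_0$ contradicting joint continuity. In particular $\|\mathcal{T}^{\mu_t} f - f\|_\infty \to 0$ as $t \downarrow 0$, which completes the verification. The only genuinely delicate step is this passage from weak/pointwise to uniform convergence, and compactness of $E$ is what makes it go through.
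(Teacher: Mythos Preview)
Your proposal is correct and is exactly the kind of direct verification the paper has in mind: the paper does not actually give a proof of this proposition, noting only that ``the proof is straightforward and does not depend on the existence of an associated trivializing family'' and citing \cite[Proposition 2.1]{rentzschvoit2000}. Your argument supplies precisely those details---the mapping and contraction properties from Condition II, the semigroup law via the disintegration identity $(\sigma \diamond \rho)(f) = \int_E (\mathcal{T}^\sigma f)\,d\rho$, and strong continuity from joint weak continuity of $\diamond$ together with compactness of $E$---and nothing more is needed.
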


An $E$-valued time-homogeneous Markov process $\{X_t\}_{t \geq 0}$ is called a \emph{$\diamond$-Lévy process} if its transition probabilities $p_{t,x} = P[X_t \in \bm{\cdot} | X_0 = x]$ can be written in the form \eqref{eq:fltc_convsemigrprop} for some $\diamond$-convolution semigroup $\{\mu_t\}$. Some equivalent martingale characterizations are given next. (The proof is the same as in \cite[Theorem 3.4]{rentzschvoit2000}.)

\begin{proposition} 
Let $\{T_t\}$ be a strong Feller semigroup on a compact space $E$, and let $\diamond$ be an FLTC for $\{T_t\}$. Let $\{\mu_t\}_{t \geq 0}$ be a $\diamond$-convolution semigroup and $\psi_j := \log\mu_1(\varphi_j)$. Let $(\mathcal{A}, \mathcal{D}(\mathcal{A}))$ be the generator of the Feller semigroup $\{\mathcal{T}^{\mu_t}\}_{t \geq 0}$ (Proposition \ref{prop:intro_converseprob}), and let $X$ be an $E$-valued càdlàg Markov process. The following assertions are equivalent:
\begin{enumerate}[itemsep=0pt,topsep=4pt]
\item[\textbf{(i)}] $X$ is a $\diamond$-Lévy process associated with $\{\mu_t\}$;
\item[\textbf{(ii)}] $\{e^{t \mskip0.5\thinmuskip \psi_j} \varphi_j(X_t)\}_{t \geq 0}$ is a martingale for each $j \in \mathbb{N}$;
\item[\textbf{(iii)}] $\bigl\{ \varphi_j(X_t) - \varphi_j(X_0) + \psi_j \int_0^t \varphi_j(X_s)\, ds \bigr\}_{t \geq 0}$ is a martingale for each $\lambda \geq 0$;
\item[\textbf{(iv)}] $\{f(X_t) - f(X_0) - \int_0^t (\mathcal{A}f)(X_s)\, ds\}$ is a martingale for each $f \in \mathcal{D}(\mathcal{A})$.
\end{enumerate}
\end{proposition}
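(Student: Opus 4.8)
The plan is to prove the four assertions equivalent by running the cycle (i) $\Rightarrow$ (iv) $\Rightarrow$ (iii) $\Leftrightarrow$ (ii) $\Rightarrow$ (i), the return implication (ii) $\Rightarrow$ (i) being the one that carries the real weight. Everything hinges on how the trivializing functions $\varphi_j$ evolve under $\{\mathcal{T}^{\mu_t}\}$, so I would begin by recording the basic eigen-relation. By Condition III of Definition \ref{def:fltc_fltcdef},
\[
(\mathcal{T}^{\mu_t}\varphi_j)(x) = \int_E \varphi_j\, d(\mu_t \diamond \delta_x) = (\mu_t \diamond \delta_x)(\varphi_j) = \mu_t(\varphi_j)\,\varphi_j(x),
\]
so each $\varphi_j$ is an eigenfunction of $\mathcal{T}^{\mu_t}$. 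Since $\delta_a$ is the $\diamond$-identity, Condition III also forces $\varphi_j(a) = 1$; hence the scalars $m_j(t) := \mu_t(\varphi_j)$ satisfy $m_j(t+s) = m_j(t)\,m_j(s)$ and $m_j(0) = 1$ and are continuous in $t$ by the weak continuity in \eqref{eq:intro_convsemigr}, so that $m_j(t) = e^{t\psi_j}$. Differentiating at $t = 0$ then shows $\varphi_j \in \mathcal{D}(\mathcal{A})$ with $\mathcal{A}\varphi_j = \psi_j\varphi_j$.

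With this in hand the three remaining implications along the cycle are short. For (i) $\Rightarrow$ (iv) I would appeal to Dynkin's formula: a $\diamond$-Lévy process associated with $\{\mu_t\}$ is, by Proposition \ref{prop:intro_converseprob}, the Feller process generated by $\mathcal{A}$, so $f(X_t) - f(X_0) - \int_0^t (\mathcal{A}f)(X_s)\,ds$ is a martingale for every $f \in \mathcal{D}(\mathcal{A})$. Specializing to $f = \varphi_j$ and inserting the eigen-relation $\mathcal{A}\varphi_j = \psi_j\varphi_j$ produces the compensated martingale of (iii), giving (iv) $\Rightarrow$ (iii). The equivalence (iii) $\Leftrightarrow$ (ii) is then a purely deterministic, one-$j$-at-a-time matter: the exponential prefactor in (ii) is exactly the integrating factor that absorbs the linear-in-$t$ compensator of (iii), so an integration by parts (the product rule applied to the finite-variation factor $e^{t\psi_j}$) turns either martingale into the other, with no further probabilistic input.

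It remains to close the loop with (ii) $\Rightarrow$ (i), and this is where the trivialization property is indispensable. Conditioning the martingale in (ii) on $\mathcal{F}_s$ gives, for every $j \in \mathbb{N}$ and $0 \le s \le t$,
\[
E\bigl[\varphi_j(X_t) \mid \mathcal{F}_s\bigr] = \mu_{t-s}(\varphi_j)\,\varphi_j(X_s) = (\mu_{t-s} \diamond \delta_{X_s})(\varphi_j).
\]
Thus the $\Theta$-transform of the regular conditional law $P[X_t \in \bm{\cdot} \mid \mathcal{F}_s]$ coincides with that of $\mu_{t-s} \diamond \delta_{X_s}$. Since $\Theta = \{\varphi_j\}$ determines a finite measure uniquely (Condition III), the two measures are equal, i.e.\ $P[X_t \in \bm{\cdot} \mid \mathcal{F}_s] = \mu_{t-s} \diamond \delta_{X_s}$ almost surely; this single identity simultaneously encodes the Markov property, time-homogeneity, and the convolution transition function \eqref{eq:fltc_convsemigrprop}, which is precisely the assertion that $X$ is a $\diamond$-Lévy process associated with $\{\mu_t\}$.

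The step I expect to be the main obstacle is exactly this final passage from conditional expectations to conditional distributions: the martingale hypothesis delivers only the conditional means of the individual functions $\varphi_j$, and these must be upgraded to an identity between measures. Two points need care. First, a single $P$-null exceptional set must serve all $j$ simultaneously, which is harmless because $\mathbb{N}$ is countable. Second, one needs a regular conditional distribution to exist before the determining property of $\Theta$ can be invoked; this is guaranteed since the compact metric space $E$ is Polish. A minor subsidiary point, already used above, is the justification that $\varphi_j \in \mathcal{D}(\mathcal{A})$, which rests on the continuity of $\varphi_j$ on the compact $E$ together with the smoothness of $t \mapsto e^{t\psi_j}$.
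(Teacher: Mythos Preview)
The paper does not supply its own proof, deferring instead to \cite[Theorem 3.4]{rentzschvoit2000}; your cycle (i) $\Rightarrow$ (iv) $\Rightarrow$ (iii) $\Leftrightarrow$ (ii) $\Rightarrow$ (i), with the determining property of $\Theta$ carrying the return step (ii) $\Rightarrow$ (i), is precisely the standard argument that reference contains. One small caveat worth flagging: with $\mu_t(\varphi_j)=e^{t\psi_j}$ and hence $\mathcal{A}\varphi_j=\psi_j\varphi_j$, Dynkin's formula produces the compensator $-\psi_j\int_0^t\varphi_j(X_s)\,ds$ and the exponential martingale $e^{-t\psi_j}\varphi_j(X_t)$, so the signs in (ii)--(iii) as printed look like typos; your line ``inserting the eigen-relation \ldots\ produces the compensated martingale of (iii)'' and your conditional-expectation computation in (ii) $\Rightarrow$ (i) are in fact using the corrected signs, which you may want to note explicitly.
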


We refer to \cite{berezansky1998,bloomheyer1994} and references therein for other properties of convolution algebras determined by FLTCs which constitute analogues of standard results in harmonic analysis.

\section{Characterizations of Feller-Lévy trivializable convolutions on locally compact spaces} \label{sec:characterizations}

\subsection{A necessary condition: the common maximizer property}

In this subsection we will prove that the FLTC axioms (i.e.\ conditions I--IV of Definition \ref{def:fltc_fltcdef}) entail a strong restriction on the behaviour of the eigenfunctions of the Feller semigroup: their maximum must be located at a common point of the metric space $E$.

Given a Feller semigroup $\{T_t\}_{t \geq 0}$ on $E$, its \emph{$\eta$-resolvent operator} is defined as
\[
\mathcal{R}_\eta: \mathrm{C}_\mathrm{b}(E) \longrightarrow \mathrm{C}_\mathrm{b}(E), \qquad\quad \mathcal{R}_\eta f := \int_0^\infty e^{-\eta t} \mskip0.8\thinmuskip T_t f \, dt \qquad\quad (\eta > 0)
\]
and we define the \emph{$\mathrm{C}_\mathrm{b}$-generator} $\bigl(\mathcal{G}^{(b)}, \mathcal{D}(\mathcal{G}^{(b)})\bigr)$ as the operator with domain $\mathcal{D}(\mathcal{G}^{(b)}) = \mathcal{R}_\eta\bigl(\mathrm{C}_\mathrm{b}(E)\bigr)$ and given by
\[
(\mathcal{G}^{(b)} u)(x) = \eta u(x) - g(x) \qquad \text{ for }\, u = \mathcal{R}_\eta g, \;\; g \in \mathrm{C}_\mathrm{b}(E), \;\; x \in E.
\]
(One can check that $\mathcal{G}^{(b)}$ is independent of $\eta$.) The following proposition and corollary show that the existence of an FLTC for $\{T_t\}$ implies that all the trivializing functions are eigenfunctions of the $\mathrm{C}_\mathrm{b}$-generator and, in addition, are uniformly bounded by their value at the identity element $a \in E$.

\begin{proposition} \label{prop:fltc_prodform_eigenf}
Let $\{T_t\}$ be a strong Feller semigroup on a locally compact separable metric space $E$, and let $\diamond$ be a bilinear operator on $\mathcal{M}_\mathbb{C}(E)$ satisfying conditions I, II and IV of Definition \ref{def:fltc_fltcdef}. Suppose that $\vartheta \in \mathrm{B}_\mathrm{b}(E)$, $\vartheta \not\equiv 0$ is a function such that 
\begin{equation} \label{eq:fltc_prodform_functeq}
(\delta_x \diamond \delta_y)(\vartheta) = \vartheta(x) \ccdot \vartheta(y) \qquad \text{for all } x,y \in E.
\end{equation}
Then $\vartheta(a) = \|\vartheta\|_\infty = 1$. Moreover, $\vartheta$ is an eigenfunction of the $\mathrm{C}_\mathrm{b}$-generator $(\mathcal{G}^{(b)}, \mathcal{D}(\mathcal{G}^{(b)}))$ associated with an eigenvalue of nonpositive real part, in the sense that we have $\vartheta \in \mathcal{D}(\mathcal{G}^{(b)})$ and $\mathcal{G}^{(b)} \vartheta = -\lambda \vartheta$ for some $\lambda \in \mathbb{C}$ with $\Re \lambda \geq 0$.
\end{proposition}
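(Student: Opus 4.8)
The plan is to first pin down the normalization of $\vartheta$ directly from the algebraic axioms, then to promote the pointwise product formula \eqref{eq:fltc_prodform_functeq} to a product formula for \emph{arbitrary} measures, and finally to feed the transition kernel $p_{t,x} = \gamma_t \diamond \delta_x$ through the latter to recognize $\vartheta$ as an eigenfunction. First I would establish the normalization. Since $\delta_a$ is the identity of $(\mathcal{M}_\mathbb{C}(E),\diamond)$, taking $y = a$ in \eqref{eq:fltc_prodform_functeq} gives $\vartheta(x) = (\delta_x \diamond \delta_a)(\vartheta) = \vartheta(x)\ccdot\vartheta(a)$ for every $x$; as $\vartheta \not\equiv 0$ this forces $\vartheta(a) = 1$, so $\|\vartheta\|_\infty \geq 1$. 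For the reverse bound I would iterate \eqref{eq:fltc_prodform_functeq}: the $n$-fold convolution power $\delta_x^{\diamond n}$ belongs to $\mathcal{P}(E)$ by Condition II, hence $\|\delta_x^{\diamond n}\| = 1$, and therefore $|\vartheta(x)|^n = \bigl|(\delta_x^{\diamond n})(\vartheta)\bigr| \leq \|\vartheta\|_\infty$ for all $n$. Letting $n \to \infty$ yields $|\vartheta(x)| \leq 1$ for every $x$, so $\|\vartheta\|_\infty = 1 = \vartheta(a)$.

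The main obstacle is the passage from the product formula on Dirac masses to the identity $(\mu \diamond \nu)(\vartheta) = \mu(\vartheta)\ccdot\nu(\vartheta)$ for all $\mu,\nu \in \mathcal{M}_\mathbb{C}(E)$, which I need with $\vartheta$ only bounded measurable. One cannot simply approximate in total variation, since the span of Dirac masses is not norm-dense in $\mathcal{M}_\mathbb{C}(E)$; instead I would argue at the level of measures. Set $K(x,y) := \delta_x \diamond \delta_y \in \mathcal{P}(E)$, which depends weakly continuously on $(x,y)$ by the weak continuity in Condition I. For fixed $\mu,\nu \in \mathcal{P}(E)$ I claim that $\mu \diamond \nu$ coincides with the measure $\sigma(\,\cdot\,) := \iint K(x,y)(\,\cdot\,)\,\mu(dx)\,\nu(dy)$. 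Indeed, for $f \in \mathrm{C}_\mathrm{b}(E)$ both $(\mu,\nu) \mapsto (\mu\diamond\nu)(f)$ and $(\mu,\nu)\mapsto \sigma(f)$ are bilinear and weakly continuous (the first again by Condition I, the second because $(x,y)\mapsto K(x,y)(f)$ is continuous and bounded), and by bilinearity of $\diamond$ they agree on the finitely supported probability measures, which are weakly dense in $\mathcal{P}(E)$ for the separable metric space $E$. Hence $\mu\diamond\nu$ and $\sigma$ agree on $\mathrm{C}_\mathrm{b}(E)$, and two finite Borel measures on a metric space that agree on $\mathrm{C}_\mathrm{b}(E)$ are equal. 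Evaluating this equality on the bounded measurable function $\vartheta$ and using \eqref{eq:fltc_prodform_functeq} gives $(\mu\diamond\nu)(\vartheta) = \iint \vartheta(x)\vartheta(y)\,\mu(dx)\nu(dy) = \mu(\vartheta)\ccdot\nu(\vartheta)$, which extends to $\mathcal{M}_\mathbb{C}(E)$ by linearity.

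With the extended product formula in hand, I would apply Condition IV. Writing $c(t) := \gamma_t(\vartheta)$, the transition kernel representation gives, for each $x$,
\[
(T_t\vartheta)(x) = p_{t,x}(\vartheta) = (\gamma_t \diamond \delta_x)(\vartheta) = \gamma_t(\vartheta)\ccdot\vartheta(x) = c(t)\,\vartheta(x),
\]
so $T_t\vartheta = c(t)\,\vartheta$; moreover $c(t) = (T_t\vartheta)(a) = \gamma_t(\vartheta)$, $c(0) = \vartheta(a) = 1$, and $|c(t)| \leq \|\vartheta\|_\infty\,\|\gamma_t\| = 1$. The semigroup property $\gamma_{t+s} = \gamma_t \diamond \gamma_s$ combined with the extended product formula gives $c(t+s) = (\gamma_t\diamond\gamma_s)(\vartheta) = c(t)c(s)$. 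To solve this multiplicative equation I first upgrade $\vartheta$ to $\mathrm{C}_\mathrm{b}(E)$: there must exist $t_0 > 0$ with $c(t_0) \neq 0$, for otherwise $T_t\vartheta \equiv 0$ for all $t > 0$, forcing $\mathcal{R}_\eta\vartheta = 0$ and contradicting $\vartheta\not\equiv0$ by injectivity of the resolvent; then $\vartheta = c(t_0)^{-1}\,T_{t_0}\vartheta \in \mathrm{C}_\mathrm{b}(E)$ by the strong Feller property. Now that $\vartheta$ is continuous, the weak continuity of $t \mapsto \gamma_t = p_{t,a}$ makes $c(t) = \gamma_t(\vartheta)$ continuous, so the multiplicative, unimodular-bounded, continuous function $c$ with $c(0)=1$ is of the form $c(t) = e^{-\lambda t}$ for a unique $\lambda \in \mathbb{C}$; the bound $|c(t)|\leq1$ gives $e^{-t\,\Re\lambda}\leq1$ for all $t\geq0$, hence $\Re\lambda \geq 0$.

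Finally I would identify the $\mathrm{C}_\mathrm{b}$-generator action. Since $\Re(\eta+\lambda) > 0$, for $\eta>0$ the resolvent of $\vartheta\in\mathrm{C}_\mathrm{b}(E)$ computes to
\[
\mathcal{R}_\eta \vartheta = \int_0^\infty e^{-\eta t}\,T_t\vartheta\,dt = \Bigl(\int_0^\infty e^{-(\eta+\lambda)t}\,dt\Bigr)\vartheta = \frac{1}{\eta+\lambda}\,\vartheta.
\]
Setting $g := (\eta+\lambda)\vartheta \in \mathrm{C}_\mathrm{b}(E)$ gives $\mathcal{R}_\eta g = \vartheta$, so $\vartheta \in \mathcal{D}(\mathcal{G}^{(b)})$ and $\mathcal{G}^{(b)}\vartheta = \eta\vartheta - g = -\lambda\vartheta$, completing the argument. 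I expect the measure-theoretic extension of the product formula in the second paragraph to be the delicate point, precisely because $\vartheta$ need not be continuous a priori; routing through equality of the two Borel measures on $\mathrm{C}_\mathrm{b}(E)$ is what lets me evaluate on $\vartheta$ despite its merely measurable regularity.
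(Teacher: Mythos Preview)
Your proof is correct and follows essentially the same route as the paper's: establish $\vartheta(a)=1$ from the identity element, bound $\|\vartheta\|_\infty$ using that $\delta_x\diamond\delta_x\in\mathcal{P}(E)$, extend the product formula from Diracs to all of $\mathcal{M}_\mathbb{C}(E)$ via weak continuity and bilinearity, feed $p_{t,x}=\gamma_t\diamond\delta_x$ through it, use the strong Feller property to get $\vartheta\in\mathrm{C}_\mathrm{b}(E)$, solve the multiplicative functional equation for $c(t)$, and compute the resolvent. Your treatment of the extension step and of the nonvanishing of $c(t_0)$ is in fact more explicit than the paper's, which simply asserts the disintegration $(\mu\diamond\nu)(d\xi)=\iint(\delta_x\diamond\delta_y)(d\xi)\,\mu(dx)\nu(dy)$ as ``a straightforward argument'' and writes $\vartheta=T_t\vartheta/\mu_t(\vartheta)$ without justifying the denominator.

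One small ordering issue: your $n$-fold iteration $|\vartheta(x)|^n=\bigl|(\delta_x^{\diamond n})(\vartheta)\bigr|$ for $n\geq 3$ already presupposes the extended product formula (since $\delta_x^{\diamond 2}$ is not a Dirac mass, you cannot apply \eqref{eq:fltc_prodform_functeq} directly to $(\delta_x^{\diamond 2}\diamond\delta_x)(\vartheta)$), and that formula only appears in your next paragraph. The fix is trivial: $n=2$ alone suffices, since $|\vartheta(x)|^2=\bigl|(\delta_x\diamond\delta_x)(\vartheta)\bigr|\leq\|\vartheta\|_\infty$ for every $x$ gives $\|\vartheta\|_\infty^2\leq\|\vartheta\|_\infty$, and combined with $\|\vartheta\|_\infty\geq\vartheta(a)=1$ this yields $\|\vartheta\|_\infty=1$. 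This is precisely the paper's argument. Alternatively, you could postpone the norm bound until after the extended product formula is in hand, at which point your iteration becomes legitimate.
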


\begin{proof}
Clearly, $\vartheta(x) = \delta_x(\vartheta) = (\delta_x \diamond \delta_a)(\vartheta) = \vartheta(x) \vartheta(a)$ for all $x \in E$. Since $\vartheta \not\equiv 0$, this implies that $\vartheta(a) = 1$.

Next, pick $\eps > 0$ and choose $x_0 \in E$ such that $|\vartheta(x_0)| > \| \vartheta \|_\infty - \eps$. Then $(\| \vartheta \|_\infty - \eps)^2 < |\vartheta(x_0)|^2 = |(\delta_{x_0} \diamond \delta_{x_0})(\vartheta)| \leq \|\vartheta\|_\infty$ (by condition II, $\delta_{x_0} \diamond \delta_{x_0} \in \mathcal{P}(E)$, which justifies the last step). Since $\eps$ is arbitrary, $\| \vartheta \|_\infty^2 \leq \|\vartheta\|_\infty$, hence $\|\vartheta\|_\infty \leq 1$.

Since $\diamond$ is bilinear and weakly continuous, a straightforward argument yields that $(\mu \diamond \nu)(d\xi) = \int_E \int_E (\delta_x \diamond \delta_y)(d\xi) \mu(dx) \nu(dy)$ for $\mu, \nu \in \mathcal{M}_\mathbb{C}(E)$. Consequently, \eqref{eq:fltc_prodform_functeq} implies that $(\mu \diamond \nu)(\vartheta) = \mu(\vartheta) \ccdot \nu(\vartheta)$ for all $\mu, \nu \in \mathcal{M}_\mathbb{C}(E)$. In particular,
\[
(T_t \vartheta)(x) \equiv p_{t,x}(\vartheta) = (\mu_t \diamond \delta_x)(\vartheta) = \mu_t(\vartheta) \ccdot \vartheta(x)
\]
due to condition IV. Given that $\{T_t\}$ is strong Feller, we have $T_t \vartheta \in \mathrm{C}_\mathrm{b}(E)$ and therefore $\vartheta = {T_t \vartheta \over \mu_t(\vartheta)} \in \mathrm{C}_\mathrm{b}(E)$.
Moreover, the fact that $\{T_t\}$ is a Feller semigroup ensures that $\mu_t(\vartheta) = (T_t \vartheta)(a)$ is a continuous function of $t$ which, by condition IV, satisfies the functional equation $\mu_{t+s}(\vartheta) = \mu_t(\vartheta) \mu_s(\vartheta)$. Therefore $\mu_t(\vartheta) = e^{-\lambda t}$ for some $\lambda \in \mathbb{C}$, and the fact that $|\mu_t(\vartheta)| \leq \|\vartheta\|_\infty = 1$ implies that $\Re\lambda \geq 0$. We thus have $T_t \vartheta = e^{-\lambda t} \vartheta$ and $\mathcal{R}_\eta \vartheta = {\vartheta \over \lambda + \eta}$ for $\eta > 0$, so we conclude that $\vartheta \in \mathcal{D}(\mathcal{G}^{(b)})$ and $\mathcal{G}^{(b)} \vartheta = -\lambda \vartheta$.
\end{proof}

It is worth noting that if the strong Feller semigroup $\{T_t\}$ is symmetric with respect to a finite measure $\mb{m} \in \mathcal{M}_+(E)$, then the space $\mathrm{C}_\mathrm{b}(E)$ is contained in $L^2(E,\mb{m})$; accordingly, the Feller semigroup $\{T_t\}_{t \geq 0}$ and the $\mathrm{C}_\mathrm{b}$-generator $\mathcal{G}^{(b)}$ extend, respectively, to a strongly continuous semigroup $\{T_t^{(2)}\}$ of symmetric operators on $L^2(E,\mb{m})$ and to the corresponding infinitesimal generator $\mathcal{G}^{(2)}$. In this setting, the trivializing functions $\vartheta \in \Theta$ are eigenfunctions of the $L^2$-generator $\mathcal{G}^{(2)}$. Applying spectral-theoretic results for self-adjoint operators on Hilbert spaces, we can deduce further properties for the trivializing functions:

\begin{proposition} \label{prop:fltc_trivTheta_L2basis}
Let $\{T_t\}$ be a Feller semigroup on a locally compact separable metric space $E$.

\smallskip
\noindent
\textbf{(a)} Suppose the corresponding transition kernels $\{p_{t,x}(\bm{\cdot})\}_{t > 0, x \in E}$ are of the form $p_{t,x}(dy) = p_t(x,y)\mb{m}(dy)$ for some finite measure $\mb{m} \in \mathcal{M}_+(E)$ and some density function $p_t(x,y)$ which is bounded and symmetric on $E \times E$ for each $t > 0$. Then:
\begin{enumerate}[leftmargin=3em]
\item[\textbf{(a1)}] $\{T_t\}$ is strong Feller, symmetric with respect to $\mb{m}$, and admits an extension $\{T_t^{(2)}\}$ which is a strongly continuous semigroup on the space $L^2(E,\mb{m})$;
\item[\textbf{(a2)}] There exists a sequence $0 \leq \lambda_1 \leq \lambda_2 \leq \lambda_3 \leq \ldots$, and an orthonormal basis $\{\omega_j\}_{j \in \mathbb{N}}$ of $L^2(E,\mb{m})$ such that
\[
T_t^{(2)} \omega_j = e^{-\lambda_j t} \omega_j \quad (t \geq 0), \qquad\; \mathcal{G}^{(2)} \omega_j = -\lambda_j \omega_j
\]
where $\mathcal{G}^{(2)}$ stands for the generator of the $L^2$-semigroup $\{T_t^{(2)}\}$. The sequence of eigenvalues is such that $\sum_{j=1}^\infty e^{-\lambda_j t} < \infty$ for each $t > 0$ (so that, in particular, $\lim_{j \to \infty} \lambda_j = \infty$).
\end{enumerate}
\smallskip
\noindent
\textbf{(b)} Assume also that $\diamond$ is an FLTC for $\{T_t\}$ and that $\Theta$ is the family of trivializing functions for $\diamond$. Write $S_k = \{j \in \mathbb{N}\mid \lambda_j = \lambda_k\}$ and $m_k = |S_k|$\, ($k \in \mathbb{N}$). Then each function $\vartheta \in \Theta$ is a solution of $\mathcal{G}^{(2)}\vartheta = -\lambda_j \vartheta$ for some $j \in \mathbb{N}$. Furthermore, there exist functions $\{\vartheta_j\}_{j \in \mathbb{N}} \subset \Theta$ such that
\[
\mathrm{span}(\{\omega_j\}_{j \in S_k}) = \mathrm{span}(\{\vartheta_j\}_{j \in S_k})
\]
and, consequently, $\overline{\mathrm{span}}(\Theta) = L^2(E,\mb{m})$.
\end{proposition}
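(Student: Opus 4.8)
The plan is to establish (a1), (a2) and (b) in turn, with the real content lying in the spectral analysis of part (a2) and in the exploitation of the separating property of $\Theta$ in part (b). For (a1), symmetry of $\{T_t\}$ with respect to $\mb{m}$ is immediate: writing $(T_t f)(x) = \int_E p_t(x,y) f(y)\, \mb{m}(dy)$ and using $p_t(x,y) = p_t(y,x)$, Fubini's theorem gives $\int_E (T_t f)\, g\, d\mb{m} = \int_E f\, (T_t g)\, d\mb{m}$. Since $\mb{m}$ is finite we have $\mathrm{C}_\mathrm{b}(E) \subset L^2(E,\mb{m})$, and the sub-Markovian symmetric semigroup is both an $L^1$- and an $L^\infty$-contraction, hence an $L^2$-contraction by interpolation; strong continuity of the extension $\{T_t^{(2)}\}$ on $L^2(E,\mb{m})$ then follows from the Feller property together with the density of $\mathrm{C}_\mathrm{b}(E)$. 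The strong Feller property $T_t(\mathrm{B}_\mathrm{b}(E)) \subset \mathrm{C}_\mathrm{b}(E)$ follows from the representation of $T_t$ by the bounded kernel $p_t$.

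For (a2), the key observation is that for each $t > 0$ the kernel $p_t$ belongs to $L^2(E \times E,\, \mb{m}\otimes\mb{m})$ (being bounded, with $\mb{m}$ finite), so $T_t^{(2)}$ is a Hilbert--Schmidt operator, in particular compact and self-adjoint. Consequently the resolvent $\mathcal{R}_\eta$ is compact and the self-adjoint generator $\mathcal{G}^{(2)}$ has purely discrete spectrum; the spectral theorem supplies an orthonormal basis $\{\omega_j\}$ of eigenfunctions. Positivity of $T_t^{(2)} = (T_{t/2}^{(2)})^2$ together with $L^2$-contractivity forces the eigenvalues $e^{-\lambda_j t}$ to lie in $(0,1]$, giving real $\lambda_j \geq 0$, which we order increasingly with multiplicity. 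Finally, since $T_{t/2}^{(2)}$ is Hilbert--Schmidt its eigenvalues are square-summable, so $\sum_j e^{-\lambda_j t} = \sum_j (e^{-\lambda_j t/2})^2 < \infty$ for every $t > 0$; in particular $\lambda_j \to \infty$.

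For (b), I first identify the functions in $\Theta$ as $L^2$-eigenfunctions. Taking $\mu_1 = \delta_x$ and $\mu_2 = \delta_y$ in Condition III shows that every $\vartheta \in \Theta$ satisfies the product formula \eqref{eq:fltc_prodform_functeq}, so Proposition \ref{prop:fltc_prodform_eigenf} applies: $\vartheta$ is a bounded continuous eigenfunction of $\mathcal{G}^{(b)}$, and by symmetry it is also an eigenfunction of $\mathcal{G}^{(2)}$. Since $\mathcal{G}^{(2)}$ admits the complete eigenbasis $\{\omega_j\}$, its eigenvalues are exactly $\{-\lambda_j\}$, and hence $\mathcal{G}^{(2)}\vartheta = -\lambda_j\vartheta$ for some $j$. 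Thus each $\vartheta \in \Theta$ lies in one of the finite-dimensional eigenspaces $W_k := \mathrm{span}(\{\omega_j\}_{j \in S_k})$.

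The main obstacle, and the heart of the argument, is to show that for each $k$ the functions of $\Theta$ lying in $W_k$ already span all of $W_k$. Here I invoke the separating property of Condition III: if $\mu \in \mathcal{M}_\mathbb{C}(E)$ satisfies $\mu(\vartheta) = 0$ for all $\vartheta \in \Theta$, then $\mu = 0$. Suppose, for contradiction, that the span $V_k$ of the functions in $\Theta$ with eigenvalue $\lambda_k$ were a proper subspace of $W_k$; choose $h \in W_k$, $h \neq 0$, orthogonal in $L^2(E,\mb{m})$ to $V_k$, and set $\mu := \overline{h}\, \mb{m}$, a finite complex measure since $\int_E |h|\, d\mb{m} \leq \|h\|_{L^2(E,\mb{m})}\, \mb{m}(E)^{1/2} < \infty$. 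With the convention $\langle f,g\rangle_{L^2(E,\mb{m})} = \int_E f\,\overline{g}\, d\mb{m}$, we have $\mu(\vartheta) = \langle \vartheta, h\rangle_{L^2(E,\mb{m})}$; this vanishes for $\vartheta$ with eigenvalue $\lambda_j \neq \lambda_k$ by orthogonality of distinct eigenspaces, and for $\vartheta \in V_k$ by the choice of $h$. Hence $\mu$ annihilates all of $\Theta$, forcing $\overline{h}\,\mb{m} = 0$ and thus $h = 0$, a contradiction. Therefore $V_k = W_k$, and choosing a basis $\{\vartheta_j\}_{j \in S_k} \subset \Theta$ of this $m_k$-dimensional space gives $\mathrm{span}(\{\vartheta_j\}_{j \in S_k}) = \mathrm{span}(\{\omega_j\}_{j \in S_k})$. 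Taking the union over all distinct eigenvalues places every $\omega_j$ in $\overline{\mathrm{span}}(\Theta)$, and since $\{\omega_j\}$ is an orthonormal basis we conclude $\overline{\mathrm{span}}(\Theta) = L^2(E,\mb{m})$.
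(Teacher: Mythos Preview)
Your proof is correct and follows essentially the same route as the paper's. The only noteworthy variation is in (a2): the paper diagonalizes the compact self-adjoint operator $T_1^{(2)}$ and then uses a dyadic argument (writing $T_1^{(2)} - \mu_j = (T_{1/2}^{(2)} + \mu_j^{1/2})(T_{1/2}^{(2)} - \mu_j^{1/2})$ and iterating) to show that the resulting eigenfunctions are eigenfunctions of every $T_t^{(2)}$, whereas you go through the generator directly, inferring compactness of the resolvent from the Hilbert--Schmidt property of $T_t^{(2)}$ and invoking the spectral theorem for $\mathcal{G}^{(2)}$; your route is slightly cleaner since the relation $T_t^{(2)}\omega_j = e^{-\lambda_j t}\omega_j$ is then immediate from the functional calculus. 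Part (b) is identical in substance to the paper's argument: both use Proposition~\ref{prop:fltc_prodform_eigenf} to place each $\vartheta$ in some eigenspace and then exploit the separating property of $\Theta$ (a consequence of Condition~III, since $\mu(\vartheta)=\nu(\vartheta)$ for all $\vartheta$ implies $\mu = \nu \diamond \delta_a = \nu$) via the measure $h\,\mb{m}$ to rule out a nontrivial orthogonal complement.
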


\begin{proof}
\textbf{(a1)} The strong Feller property follows from \cite[Theorem 1.14]{bottcher2013}. The symmetry with respect to $\mb{m}$ is obvious, and it is straightforward to show that for $f \in \mathrm{C}_\mathrm{c}(E)$ we have $\|T_t f\|_{L^2(E,\mb{m})} \leq \|f\|_{L^2(E,\mb{m})}$ and $\|T_t f - f\|_{L^2(E,\mb{m})} \to 0$ as $t \downarrow 0$, so that the extension $\{T_t^{(2)}\}$ is a strongly continuous semigroup on $L^2(E,\mb{m})$.
\smallskip

\textbf{(a2)} Let $\langle \bm{\cdot},\bm{\cdot}\rangle$ be the inner product on $L^2(E,\mb{m})$. By the spectral theorem for compact self-adjoint operators (cf.\ e.g.\ \cite[Theorem 6.7]{teschl2014}), the operator $T_1^{(2)}$ can be written as $T_1^{(2)} = \sum_{j=1}^\infty \mu_j \langle\omega_j,\bm{\cdot}\rangle \, \omega_j$, where $\mu_1 \geq \mu_2 \geq \ldots$ are the eigenvalues of $T_1^{(2)}$ and $\{\omega_j\}_{j \in \mathbb{N}}$ is an orthonormal basis of $L^2(E,\mb{m})$ such that each $\omega_j$ is an eigenfunction of $T_1^{(2)}$ associated with the eigenvalue $\mu_j$; in addition, we have $\mu_1 \leq \|T_1^{(2)}\|$ and $\mu_j \downarrow 0$ as $j \to \infty$. If we define $\lambda_j = -\log\mu_j$, then it follows that $T_t^{(2)} = \sum_{j=1}^\infty e^{-\lambda_j t} \langle\omega_j,\bm{\cdot}\rangle \, \omega_j$. (This can be justified as follows, cf.\ \cite[pp.\ 463--464]{getoor1959} for further details: we know that $(T_1^{(2)} - \mu_j)\omega_j = (T_{1/2}^{(2)} + \mu_j^{1/2})(T_{1/2}^{(2)} - \mu_j^{1/2}) \omega_j = 0$, and all the eigenvalues of $(T_{1/2}^{(2)} + \mu_j^{1/2})$ are positive, hence $T_{1/2}^{(2)} \omega_j = \mu_j^{1/2} \omega_j$; similarly $T_t^{(2)} \omega_j = e^{-\lambda_j t} \omega_j$ for all $t = m/2^k$ and thus, by strong continuity, for all $t > 0$.) Consequently, $\mathcal{G}^{(2)} \omega_j = \lim_{t \downarrow 0} \tfrac{1}{t} (T_t^{(2)} \omega_j - \omega_j) =  -\lambda_j \omega_j$. Since $\mb{m}$ is a finite measure and the densities $p_t(\bm{\cdot},\bm{\cdot})$ are bounded, the operator $T_t^{(2)}$ is, for each $t > 0$, a Hilbert-Schmidt operator, and therefore we have $\sum_{j=1}^\infty e^{-\lambda_j t} < \infty$ for all $t > 0$.
\smallskip

\textbf{(b)} By Proposition \ref{prop:fltc_prodform_eigenf}, each $\vartheta \in \Theta$ is such that $\mathcal{G}^{(2)} \vartheta = -\lambda \vartheta$ for some $\lambda \in \mathbb{C}$. Given that $\Theta \subset L^2(E,\mb{m})$ and eigenfunctions associated with different eigenvalues are orthogonal, we have $\lambda = \lambda_j$ because otherwise we get a contradiction with the basis property of $\{\omega_j \}$.
Next, fix $t>0$, $k \in \mathbb{N}$ and let $\Theta_k := \{\vartheta \in \Theta \mid T_t^{(2)} \vartheta = e^{-\lambda_k t} \vartheta\} \subset L^2(E,\mb{m})$. Given that $\{\omega_j\}_{j \in S_k}$ is a basis of the eigenspace associated with $\lambda_k$, we have $\mathrm{span}(\Theta_k) \subset \mathrm{span}(\{\omega_j\}_{j \in S_k})$. To prove the reverse inclusion, let $h \in \mathrm{span}(\{\omega_j\}_{j \in S_k}) \cap \mathrm{span}(\Theta_k)^\bot$, write $\nu_h(dx) := h(x) \mb{m}(dx)$ and observe that (since $\mb{m}$ is a finite measure) $\nu_h \in \mathcal{M}_\mathbb{C}(E)$. Then the integral
\[
\nu_h(\vartheta) = \int_E \vartheta(x) h(x) \mb{m}(dx)
\]
is equal to zero for $\vartheta \in \Theta_k$ because $h \in \mathrm{span}(\Theta_k)^\bot$, and is also equal to zero for $\vartheta \in \Theta \setminus \Theta_k$ because then $h$ and $\vartheta$ are eigenfunctions of $T_t^{(2)}$ associated with different eigenvalues. Since measures $\nu \in \mathcal{M}_\mathbb{C}(E)$ are uniquely determined by the integrals $\{\nu(\vartheta)\}_{\vartheta \in \Theta}$, it follows that $\nu_h = 0$ and therefore $h = 0$; this shows that $\mathrm{span}(\Theta_k) = \mathrm{span}(\{\omega_j\}_{j \in S_k})$. It follows at once that there exist linearly independent functions $\{\vartheta_j\}_{j \in \mathbb{N}} \subset \Theta$ such that $\mathrm{span}(\{\omega_j\}_{j \in S_k}) = \mathrm{span}(\{\vartheta_j\}_{j \in S_k})$.
\end{proof}

The conclusions of Proposition \ref{prop:fltc_trivTheta_L2basis} are valid, in particular, for the Feller semigroups associated with the Brownian motion on a compact Riemannian manifold or with an $(A,m)$-reflected diffusion on a bounded Lipschitz domain, cf.\ Examples \ref{exam:fltc_feller_riemannian} and \ref{exam:fltc_feller_boundeddom} respectively. (Indeed, it follows from e.g.\ \cite[Theorem 7.4]{sturm1998b} that in both cases we have $p_{t,x}(dy) = p_t(x,y) \mb{m}(dy)$ with $p_t(x,y)$ bounded and symmetric; recall also that compact Riemannian manifolds have finite volume, cf.\ e.g.\ \cite[Theorem 3.11]{grigoryan2009}.)

The bottom line of Propositions \ref{prop:fltc_prodform_eigenf} and \ref{prop:fltc_trivTheta_L2basis} is that if there exists an FLTC for a Feller semigroup $\{T_t\}$ satisfying the assumptions above, then the following \emph{common maximizer property} holds:
\begin{enumerate}
\item[\textbf{CM.}] There exists a set $\{\vartheta_j\}_{j \in \mathbb{N}}$ of eigenfunctions of $\mathcal{G}^{(2)}$ and a point $a \in E$ such that $\mathrm{span}\{\vartheta_j \}_{j \in \mathbb N}$ is dense in $L^2(E, \mathbf m)$ and  $\vartheta_j (a)  = \|\vartheta_j \|_\infty = 1$ for every $j \in \mathbb N$.
\end{enumerate}
(The functions $\vartheta_j$ associated with a common eigenvalue need not be orthogonal in $L^2(E,\mb{m})$.) The common maximizer property will play a fundamental role in the proof of the inexistence results established in Subsection \ref{sub:eigenexp_critical_nonex}.

\subsection{Sufficient conditions on compact metric spaces}

In the context of Feller semigroups determined by one-dimensional diffusions, it has been shown that the positivity of the kernel of a regularized product formula is a key ingredient for the construction of probability-preserving convolution-like operators \cite[Section 4]{sousaetal2019b}. The next result shows that a similar positivity condition is enough to ensure the existence of an FLTC for a Feller semigroup on a general compact metric space.

\begin{proposition} \label{prop:fltc_prodform_kerncharact}
Under the assumptions of part (a) of Proposition \ref{prop:fltc_trivTheta_L2basis}, assume that the metric space $E$ is compact.
Let $0 \leq \lambda_1 \leq \lambda_2 \leq \lambda_3 \leq \ldots$ be the eigenvalues of $-\mathcal{G}^{(2)}$ and let $\{\varphi_j\}_{j \in \mathbb{N}} \subset L^2(E,\mb{m})$ be an orthogonal set of functions such that
\[
T_t^{(2)} \varphi_j = e^{-\lambda_j t} \varphi_j, \qquad \varphi_1 = \mathds{1}, \qquad \sup_j \|\varphi_j\|_2 < \infty
\]
where $\|\bm{\cdot}\|_2$ denotes the norm of the space $L^2(E,\mb{m})$. Then $\varphi_j \in \mathrm{C}(E)$ for all $j \in \mathbb{N}$, and the series $\sum_{j=1}^\infty {1 \over \|\varphi_j\|_2^2} e^{-\lambda_j t} \varphi_j(x) \, \varphi_j(y) \, \varphi_j(\xi)$ is absolutely convergent for all $t > 0$ and $x,y,\xi \in E$. Moreover, the following assertions are equivalent:
\begin{enumerate}
\item[\textbf{(i)}] We have 
\begin{equation} \label{eq:fltc_timeshiftkern_positiv}
q_t(x,y,\xi) := \sum_{j=1}^\infty {1 \over \|\varphi_j\|_2^2} e^{-\lambda_j t} \varphi_j(x) \, \varphi_j(y) \, \varphi_j(\xi) \geq 0
\end{equation}
for all $t > 0$ and $x, y, \xi \in E$.
\item[\textbf{(ii)}] For each $x, y \in E$ there exists a measure $\bm{\nu}_{x,y} \in \mathcal{P}(E)$ such that the product $\varphi_j(x) \, \varphi_j(y)$ admits the integral representation
\begin{equation} \label{eq:fltc_tshypprodform}
\varphi_j(x) \, \varphi_j(y) = \int_E \varphi_j(\xi)\, \bm{\nu}_{x,y}(d\xi), \qquad x, y \in E, \; j \in \mathbb{N}.
\end{equation}
\end{enumerate}
If these equivalent conditions hold and, in addition, there exists $a \in E$ such that $\varphi_j(a) = 1$ for all $j \in \mathbb{N}$, then the bilinear operator $\diamond$ on $\mathcal{M}_\mathbb{C}(E)$ defined as
\begin{equation} \label{eq:fltc_kerncharact_convdef}
(\mu \diamond \nu)(d\xi) = \int_E \int_E \bm{\nu}_{x,y}(d\xi) \, \mu(dx) \, \nu(dy)
\end{equation}
is an FLTC for $\{T_t\}$ with trivializing family $\Theta = \{\varphi_j\}_{j \in \mathbb{N}}$.
\end{proposition}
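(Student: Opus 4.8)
The plan is to prove the four assertions in order, exploiting throughout the spectral resolution furnished by Proposition~\ref{prop:fltc_trivTheta_L2basis}(a2): since the $\varphi_j$ are orthogonal eigenfunctions spanning the eigenspaces of $\mathcal{G}^{(2)}$ with full multiplicities, the normalized functions $\widehat\varphi_j := \varphi_j/\|\varphi_j\|_2$ form an orthonormal basis of $L^2(E,\mb{m})$, so that the density admits the Mercer expansion $p_t(\eta,\xi) = \sum_j e^{-\lambda_j t}\widehat\varphi_j(\eta)\widehat\varphi_j(\xi)$. For the regularity of the eigenfunctions I would note that, $\mb{m}$ being finite and $p_s(x,\cdot)$ bounded, $(T_s\varphi_j)(x) = \int_E p_s(x,y)\varphi_j(y)\,\mb{m}(dy)$ is a bounded measurable function; writing $\varphi_j = e^{2\lambda_j s}\,T_s(T_s\varphi_j)$ and invoking the strong Feller property (which sends $\mathrm{B}_\mathrm{b}(E)$ into $\mathrm{C}_\mathrm{b}(E)$) produces a continuous representative, which I identify with $\varphi_j$. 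For the series I would use Cauchy--Schwarz and the reproducing identity $\|p_s(x,\cdot)\|_2^2 = p_{2s}(x,x)$ to get $|\widehat\varphi_j(x)| = e^{\lambda_j s}\bigl|\int_E p_s(x,y)\widehat\varphi_j(y)\mb{m}(dy)\bigr| \le e^{\lambda_j s}\,p_{2s}(x,x)^{1/2}$. Taking $s = t/6$ and splitting $e^{-\lambda_j t}$ evenly over the three variables yields $e^{-\lambda_j t}\,|\widehat\varphi_j(x)\widehat\varphi_j(y)\widehat\varphi_j(\xi)| \le e^{-\lambda_j t/2}\bigl(p_{t/3}(x,x)\,p_{t/3}(y,y)\,p_{t/3}(\xi,\xi)\bigr)^{1/2}$; since $\sup_j\|\varphi_j\|_2 < \infty$, the diagonal heat kernel is bounded and $\sum_j e^{-\lambda_j t/2} < \infty$, so the series in \eqref{eq:fltc_timeshiftkern_positiv} converges absolutely, indeed uniformly in $x,y,\xi$.

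For the equivalence of (i) and (ii), the implication (ii)$\Rightarrow$(i) follows by substituting the product formula \eqref{eq:fltc_tshypprodform} into \eqref{eq:fltc_timeshiftkern_positiv} and interchanging sum and integral (justified by the absolute convergence above), which gives $q_t(x,y,\xi) = \int_E p_t(\eta,\xi)\,\bm{\nu}_{x,y}(d\eta) \ge 0$. For the converse I would set $\bm{\nu}_{x,y}^{\,t}(d\xi) := q_t(x,y,\xi)\,\mb{m}(d\xi)$; orthogonality of the $\varphi_j$ together with $\varphi_1 = \mathds{1}$ gives the total mass $\bm{\nu}_{x,y}^{\,t}(E) = e^{-\lambda_1 t} \to 1$ and, more generally, $\int_E \varphi_k\,d\bm{\nu}_{x,y}^{\,t} = e^{-\lambda_k t}\varphi_k(x)\varphi_k(y)$. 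Under hypothesis (i) these are (sub)probability measures on the compact space $E$, so Prokhorov's theorem provides a sequence $t_n\downarrow 0$ along which $\bm{\nu}_{x,y}^{\,t_n}$ converges weakly to some $\bm{\nu}_{x,y}\in\mathcal{P}(E)$; since each $\varphi_k\in\mathrm{C}(E)$, passing to the limit yields exactly \eqref{eq:fltc_tshypprodform}.

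With $\diamond$ defined by \eqref{eq:fltc_kerncharact_convdef}, formula \eqref{eq:fltc_tshypprodform} gives the trivialization identity $(\mu\diamond\nu)(\varphi_j) = \mu(\varphi_j)\ccdot\nu(\varphi_j)$ at once. The crucial auxiliary fact is injectivity: if $\mu\in\mathcal{M}_\mathbb{C}(E)$ satisfies $\mu(\varphi_j) = 0$ for all $j$, then $\int_E p_t(x,\xi)\,\mu(d\xi) = \sum_j e^{-\lambda_j t}\widehat\varphi_j(x)\,\|\varphi_j\|_2^{-1}\mu(\varphi_j) = 0$ for every $x$ and $t$, whence $\int_E (T_t f)\,d\mu = 0$ for all $f\in\mathrm{C}(E)$; letting $t\downarrow 0$ and using strong continuity of the Feller semigroup on $\mathrm{C}(E)$ forces $\int_E f\,d\mu = 0$, i.e.\ $\mu = 0$. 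This injectivity is precisely Condition~III, and it reduces the remaining axioms to identities between trivializing integrals: commutativity (equivalently $\bm{\nu}_{x,y} = \bm{\nu}_{y,x}$) and associativity hold because both sides share all their $\varphi_j$-integrals; $\delta_a$ is the unit because $\varphi_j(a) = 1$; Condition~II and the estimate $\|\mu\diamond\nu\| \le \|\mu\|\,\|\nu\|$ follow from $\bm{\nu}_{x,y}\in\mathcal{P}(E)$; and Condition~IV holds with $\gamma_t := p_{t,a}$, since $\gamma_t(\varphi_j) = e^{-\lambda_j t}$ gives $(\gamma_t\diamond\delta_x)(\varphi_j) = e^{-\lambda_j t}\varphi_j(x) = p_{t,x}(\varphi_j)$ and $(\gamma_t\diamond\gamma_s)(\varphi_j) = \gamma_{t+s}(\varphi_j)$ (here $\gamma_t\in\mathcal{P}(E)$ because the semigroup is conservative, i.e.\ $\lambda_1 = 0$). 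The weak continuity of $\diamond$ in Condition~I reduces, again via Prokhorov and injectivity, to weak continuity of $(x,y)\mapsto\bm{\nu}_{x,y}$: its $\varphi_j$-integrals $\varphi_j(x)\varphi_j(y)$ are continuous, so any weak limit of $\bm{\nu}_{x_n,y_n}$ along $(x_n,y_n)\to(x,y)$ must coincide with $\bm{\nu}_{x,y}$, making $(x,y)\mapsto\int_E f\,d\bm{\nu}_{x,y}$ an element of $\mathrm{C}(E\times E)$ for each $f\in\mathrm{C}(E)$.

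The step I expect to be the main obstacle is the converse implication (i)$\Rightarrow$(ii): positivity of $q_t$ only yields a family of candidate kernels $q_t(x,y,\cdot)\mb{m}$, and extracting from it a genuine probability measure satisfying the exact product formula \eqref{eq:fltc_tshypprodform} requires a compactness argument that leans essentially on $E$ being compact (so that no mass escapes as $t\downarrow 0$). A close second is the injectivity underpinning Condition~III, which cannot be derived from sup-norm density of $\mathrm{span}\{\varphi_j\}$ (unavailable in general) and must instead be extracted from the regularizing action of the heat semigroup; once injectivity is in hand, the verification of the algebraic axioms I--IV is routine.
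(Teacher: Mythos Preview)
Your proposal is correct and follows essentially the same route as the paper: continuity of the $\varphi_j$ via the strong Feller property, the eigenfunction bound $|\widehat\varphi_j(x)|\le e^{\lambda_j s}\|p_s(x,\cdot)\|_2$ (the paper writes this with $c_\eps\,\mb{m}(E)^{1/2}$ in place of $p_{2s}(x,x)^{1/2}$, which is the same Cauchy--Schwarz estimate), the measures $\bm{\nu}_{t,x,y}=q_t(x,y,\cdot)\mb{m}$ plus compactness for (i)$\Rightarrow$(ii), the substitution $q_t=\int p_t\,d\bm{\nu}$ for (ii)$\Rightarrow$(i), and injectivity via $\int(T_th)\,d\mu=0$ for Condition~III. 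Two cosmetic remarks: since $\varphi_1=\mathds 1$ forces $\lambda_1=0$ (as you note yourself later), the total mass $\bm{\nu}_{x,y}^{\,t}(E)$ is exactly $1$, not merely $\to 1$; and whereas the paper proves uniqueness of the subsequential limit directly (by testing against $T_\eps h$ and letting $\eps\downarrow 0$), you defer this to the injectivity statement---that is fine, but make sure you invoke injectivity \emph{before} using $\bm{\nu}_{x,y}$ in the definition \eqref{eq:fltc_kerncharact_convdef}, so that the map $(x,y)\mapsto\bm{\nu}_{x,y}$ is unambiguously defined and weakly continuous at the moment you integrate against it.
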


\begin{proof}
Denote the inner product of the space $L^2(E,\mb{m})$ by $\langle \bm{\cdot}, \bm{\cdot }\rangle$. For each $\eps > 0$ we have
\begin{equation} \label{eq:fltc_prodform_kerncharact_pf1}
|\varphi_j(x)| = e^{\lambda_j \eps} |(T_\eps^{(2)} \varphi_j)(x)| = e^{\lambda_j \eps} \langle \varphi_j, p_\eps(x,\bm{\cdot}) \rangle \leq c_\eps \, \mb{m}(E)^{1/2}\, e^{\lambda_j \eps} \|\varphi_j\|_2 < \infty \qquad \text{for } \mb{m}\text{-a.e. } x \in E
\end{equation}
where $c_\eps = \sup_{(x,y) \in E \times E\,} p_\eps(x,y)$. This shows that the function $\varphi_j$ belongs to the space $\mathrm{B}_\mathrm{b}(E)$ (possibly after redefining $\varphi_j$ on a $\mb{m}$-null set). Since $\{T_t\}$ is strong Feller (Proposition \ref{prop:fltc_trivTheta_L2basis}), it follows that $\varphi_j = e^{\lambda_j \eps\mskip0.7\thinmuskip } T_\eps\varphi_j \in \mathrm{C}(E)$. The assumption that $\sup_j \|\varphi_j\|_2 < \infty$, together with the estimate \eqref{eq:fltc_prodform_kerncharact_pf1}, ensures that the series $\sum_{j=1}^\infty {1 \over \|\varphi_j\|_2^2} e^{-\lambda_j t} \varphi_j(x) \, \varphi_j(y) \, \varphi_j(\xi)$ is absolutely convergent.

Suppose that \eqref{eq:fltc_timeshiftkern_positiv} holds and fix $x, y \in E$. For $t > 0$, let $\bm{\nu}_{t,x,y} \in \mathcal{M}_+(E)$ be the measure defined by $\bm{\nu}_{t,x,y}(d\xi) = q_t(x,y,\xi) \mb{m}(d\xi)$. We have
\begin{equation} \label{eq:fltc_tshypprodform_pf1}
\begin{aligned}
\int_E \varphi_j(\xi) \, \bm{\nu}_{t,x,y}(d\xi) & = \int_E \varphi_j(\xi)\sum_{k=1}^\infty {1 \over \|\varphi_j\|_2^2} e^{-\lambda_k t} \varphi_k(x) \, \varphi_k(y) \, \varphi_k(\xi) \, \mb{m}(d\xi) \\
& = \sum_{k=1}^\infty {1 \over \|\varphi_j\|_2^2} e^{-\lambda_k t} \varphi_k(x) \, \varphi_k(y) \, \langle\varphi_j, \varphi_k\rangle \\
& = e^{-\lambda_j t} \varphi_j(x) \, \varphi_j(y)
\end{aligned}
\end{equation}
It then follows from \eqref{eq:fltc_tshypprodform_pf1} (with $j=1$) that $\bm{\nu}_{t,x,y}(E) = 1$, so that
\[
\bm{\nu}_{t,x,y} \in \mathcal{P}(E) \qquad \text{for all } t > 0,\; x, y \in E.
\]
Now, let $\{t_n\}_{n \in \mathbb{N}}$ be an arbitrary decreasing sequence with $t_n \downarrow 0$. Since any uniformly bounded sequence of finite positive measures contains a vaguely convergent subsequence, there exists a subsequence $\{t_{n_k}\}$ and a measure $\bm{\nu}_{x,y} \in \mathcal{M}_+(E)$ such that $\bm{\nu}_{t_{n_k},x,y} \varrow \bm{\nu}_{x,y}$ as $k \to \infty$. Let us show that all such subsequences $\{\bm{\nu}_{t_{n_k},x,y}\}$ have the same vague limit. Suppose that $t_k^{1}$, $t_k^{2}$ are two different sequences with $t_k^s \downarrow 0$ and that $\bm{\nu}_{t_k^s,x,y}\! \varrow \bm{\nu}_{x,y}^s$ as $k \to \infty$ ($s=1,2$). Recalling that $E$ is compact, it follows that for all $h \in \mathrm{C}(E)$ and $\eps > 0$ we have
\begin{align*}
\int_E (T_\eps h)(\xi) \, \bm{\nu}_{x,y}^s(d\xi) & = \lim_{k \to \infty} \int_E (T_\eps h)(\xi) \, \bm{\nu}_{t_k^s,x,y}(d\xi) \\
& = \lim_{k \to \infty} \sum_{j=1}^\infty {1 \over \|\varphi_j\|_2^2} e^{-\lambda_j(t_k^s + \eps)} \varphi_j(x) \, \varphi_j(y) \, \langle h, \varphi_j \rangle \\[-1.5pt]
& = \sum_{j=1}^\infty {1 \over \|\varphi_j\|_2^2} e^{-\lambda_j \eps} \varphi_j(x) \, \varphi_j(y) \, \langle h, \varphi_j \rangle
\end{align*}
where the second equality follows from the identities $\langle q_t(x,y,\bm{\cdot}),\varphi_j \rangle = e^{-\lambda_j t} \varphi_j(x) \, \varphi_j(y)$ and $\langle T_\eps h,\varphi_j \rangle = \langle h, T_\eps \varphi_j \rangle = e^{-\lambda_j \eps} \langle h, \varphi_j \rangle$. Consequently, we have
\begin{equation} \label{eq:fltc_tshypprodform_pf2}
\int_E (T_\eps h)(\xi) \, \bm{\nu}_{x,y}^1(d\xi) = \int_E (T_\eps h)(\xi) \, \bm{\nu}_{x,y}^2(d\xi) \qquad \text{for all } \eps > 0.
\end{equation}
Since $h \in \mathrm{C}(E)$, by strong continuity of the Feller semigroup $\{T_t\}$ we have $\lim_{\eps \downarrow 0} \| T_\eps h - h \|_\infty = 0$, so by taking the limit $\eps \downarrow 0$ in both sides of \eqref{eq:fltc_tshypprodform_pf2} we deduce that $\bm{\nu}_{x,y}^1(h) = \bm{\nu}_{x,y}^2(h)$, where $h \in \mathrm{C}(E)$ is arbitrary; therefore, $\bm{\nu}_{x,y}^1 = \bm{\nu}_{x,y}^2$. Thus all subsequences have the same vague limit, and from this we conclude that $\bm{\nu}_{t,x,y} \varrow \bm{\nu}_{x,y}$ as $t \downarrow 0$. The product formula \eqref{eq:fltc_tshypprodform} is then obtained by taking the limit $t \downarrow 0$ in the leftmost and rightmost sides of \eqref{eq:fltc_tshypprodform_pf1}.

Conversely, suppose that \eqref{eq:fltc_tshypprodform} holds for some measure $\bm{\nu}_{x,y} \in \mathcal{M}_+(E)$. Noting that for $h \in \mathrm{C}(E)$ we have \vspace{-5pt}
\begin{align*}
\bigl\langle h, p_t(x,\bm{\cdot}) \bigr\rangle = (T_t h)(x) & = \sum_{j=1}^\infty {1 \over \|\varphi_j\|_2^2} \langle T_t h, \varphi_j \rangle \, \varphi_j(x) \\
& = \sum_{j=1}^\infty {1 \over \|\varphi_j\|_2^2} e^{-\lambda_j t} \langle h, \varphi_j \rangle \, \varphi_j(x) \\[-1pt]
& = \Bigl\langle h,\sum_{j=1}^\infty {1 \over \|\varphi_j\|_2^2} e^{-\lambda_j t} \varphi_j(x) \varphi_j(\bm{\cdot}) \Bigr\rangle
\end{align*}
we see that $p_t(x,y) = \sum_{j=1}^\infty {1 \over \|\varphi_j\|_2^2} e^{-\lambda_j t} \varphi_j(x) \, \varphi_j(y)$. Consequently, we have
\[
q_t(x,y,\xi) = \sum_{j=1}^\infty {1 \over \|\varphi_j\|_2^2} e^{-\lambda_j t} \varphi_j(x) \! \int_E \varphi_j(z)\, \bm{\nu}_{y,\xi}(dz) = \int_E p_t(x,z)\, \bm{\nu}_{y,\xi}(dz) \geq 0 \qquad (t > 0,\; x,y \in E)
\]
because both the density $p_t(x,\bm{\cdot})$ and the measures $\bm{\nu}_{y,\xi}$ are nonnegative.

Finally, assume that $\varphi_j(a) = 1$ for all $j$ and that (ii) holds. Let $\diamond$ be the operator defined by \eqref{eq:fltc_kerncharact_convdef}. To prove that $\Theta = \{\varphi_j\}_{j \in \mathbb{N}}$ satisfies condition III in Definition \ref{def:fltc_fltcdef}, it only remains to show that each $\mu \in \mathcal{M}_\mathbb{C}(E)$ is uniquely characterized by $\{\mu(\varphi_j)\}_{j \in \mathbb{N}}$. Indeed, if we take $\mu \in \mathcal{M}_\mathbb{C}(E)$ such that $\mu(\varphi_j) = 0$ for all $j$, then for $h \in \mathrm{C}(E)$ and $t > 0$ we have
\[
\int_E (T_t h)(x) \, \mu(dx) = \int_E \, \sum_{j=1}^\infty {1 \over \|\varphi_j\|_2^2} e^{-\lambda_j t} \langle h, \varphi_j \rangle \, \varphi_j(x) \, \mu(dx) = \sum_{j=1}^\infty e^{-\lambda_j t} \langle h, \varphi_j \rangle \, \mu(\varphi_j)  = 0
\]
and this implies that $\mu(h) = 0$ for all $h \in \mathrm{C}(E)$, so that $\mu \equiv 0$. Using the fact that $\Theta$ satisfies condition III, we can easily check that $\diamond$ is commutative, associative, bilinear and has identity element $\delta_a$. It is also straightforward that $\|\mu \diamond \nu\| \leq \|\mu\| \ccdot \|\nu\|$ and that $\mathcal{P}(E) \diamond \mathcal{P}(E) \subset \mathcal{P}(E)$. If $x_n \to x$ and $y_n \to y$, then 
\[
(\delta_{x_n} \diamond \delta_{y_n})(\varphi_j) = \varphi_j(x_n) \varphi_j(y_n) \longrightarrow \varphi_j(x) \varphi_j(y) = (\delta_x \diamond \delta_y)(\varphi_j) \qquad (j \in \mathbb{N})
\]
and therefore (by a standard vague convergence argument, cf.\ \cite[Remark 6.4]{sousaetal2020}) $\delta_{x_n} \diamond \delta_{y_n} \varrow \delta_x \diamond \delta_y$; it is then straightforward to conclude that $(\mu, \nu) \mapsto \mu \diamond \nu$ is continuous in the weak topology. Noting that $p_{t,x}(\varphi_j) = e^{-\lambda_j t} \varphi_j(x) = p_{t,a}(\varphi_j) \delta_x(\varphi_j)$, we conclude that $\diamond$ is an FLTC for $\{T_t\}$.
\end{proof}

Combining the proposition above with the common maximizer property of the previous subsection, we obtain the following characterization for the existence of an FLTC associated with a Feller semigroup whose eigenvalues are simple:

\begin{corollary} \label{cor:fltc_prodform_simplspectr_iff}
In the conditions of Proposition \ref{prop:fltc_prodform_kerncharact}, assume that the operator $T_1^{(2)}$ has simple spectrum (i.e.\ all the eigenvalues $e^{-\lambda_j}$ have multiplicity $1$). Let $\{(\lambda_j,\omega_j)\}_{j \in \mathbb{N}}$ be the eigenvalue-eigenfunction pairs defined in Proposition \ref{prop:fltc_trivTheta_L2basis}. Then the following are equivalent:
\begin{enumerate}
\item[\textbf{(i)}] There exists an FLTC for $\{T_t\}_{t \geq 0}$;
\item[\textbf{(ii)}] There exists $a \in E$ such that $|\omega_j(a)| = \|\omega_j\|_\infty$ for all $j \in \mathbb{N}$, and the positivity condition \eqref{eq:fltc_timeshiftkern_positiv} holds for the eigenfunctions $\varphi_j(x) := {\omega_j(x) \over \omega_j(a)}$. 
\end{enumerate}
\end{corollary}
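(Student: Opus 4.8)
The plan is to deduce both implications directly from the three preceding results, the decisive leverage being the simple-spectrum hypothesis, which forces every eigenspace of $\mathcal{G}^{(2)}$ to be one-dimensional. Indeed, in the notation of Proposition \ref{prop:fltc_trivTheta_L2basis} each index set $S_k$ is then a singleton, so any eigenfunction associated with $\lambda_j$ is a scalar multiple of $\omega_j$. At the outset I would record two standing remarks: first, that each $\omega_j$ may be taken real-valued (each eigenspace being a line and $T_t^{(2)}$ preserving real functions), so that the kernel in \eqref{eq:fltc_timeshiftkern_positiv} is a genuine real quantity; and second, that the constant function is the eigenfunction for the least eigenvalue (equivalently $\varphi_1 = \mathds{1}$, as posited in Proposition \ref{prop:fltc_prodform_kerncharact}), whence by simplicity $\omega_1$ is constant.

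For the implication (ii) $\Rightarrow$ (i) I would take the common maximizer $a$ and the positivity \eqref{eq:fltc_timeshiftkern_positiv} as given and verify that $\varphi_j := \omega_j/\omega_j(a)$ meets every hypothesis of Proposition \ref{prop:fltc_prodform_kerncharact}. The functions $\varphi_j$ are orthogonal (scalar multiples of the $\omega_j$) and are eigenfunctions; the normalization gives $\varphi_j(a) = \|\varphi_j\|_\infty = 1$ and $\varphi_1 = \mathds{1}$; and since $\|\omega_j\|_\infty \geq \|\omega_j\|_2\, \mb{m}(E)^{-1/2} = \mb{m}(E)^{-1/2}$, one has $\sup_j\|\varphi_j\|_2 = \sup_j \|\omega_j\|_\infty^{-1} \leq \mb{m}(E)^{1/2} < \infty$. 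The hypothesis \eqref{eq:fltc_timeshiftkern_positiv} is precisely condition (i) of that proposition, and because $\varphi_j(a) = 1$ for all $j$, its concluding assertion produces the FLTC defined by \eqref{eq:fltc_kerncharact_convdef}.

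For the converse (i) $\Rightarrow$ (ii) I would start from an FLTC $\diamond$ with trivializing family $\Theta$. By Proposition \ref{prop:fltc_trivTheta_L2basis}(b) together with simplicity, for each $k$ there is a trivializing function $\vartheta_k \in \Theta$ spanning the $\lambda_k$-eigenspace, so $\vartheta_k = c_k \omega_k$ with $c_k \neq 0$. Specializing condition III to point masses yields $(\delta_x \diamond \delta_y)(\vartheta_k) = \vartheta_k(x)\vartheta_k(y)$, so Proposition \ref{prop:fltc_prodform_eigenf} applies and gives $\vartheta_k(a) = \|\vartheta_k\|_\infty = 1$ at the algebra identity $a$. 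Reading off $c_k\omega_k(a) = 1$ and $|c_k|\,\|\omega_k\|_\infty = 1$ forces $|\omega_k(a)| = \|\omega_k\|_\infty$ (the common maximizer part of (ii)) and identifies $\varphi_k = \omega_k/\omega_k(a) = \vartheta_k$. Finally, the measure $\bm{\nu}_{x,y} := \delta_x \diamond \delta_y$, which lies in $\mathcal{P}(E)$ by condition II, satisfies $\int_E \varphi_k\, d\bm{\nu}_{x,y} = \varphi_k(x)\varphi_k(y)$; this is condition (ii) of Proposition \ref{prop:fltc_prodform_kerncharact}, whose hypotheses on $\{\varphi_j\}$ I have already checked, so its equivalence delivers the positivity \eqref{eq:fltc_timeshiftkern_positiv}.

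The main obstacle is the passage, in the direction (i) $\Rightarrow$ (ii), from the abstract and a priori over-determined family $\Theta$ to the concrete normalized eigenfunctions $\varphi_j$. This is exactly where simplicity of the spectrum is indispensable: it collapses each eigenspace to a line, thereby both fixing $\vartheta_k$ up to a scalar and letting that scalar be pinned down by the normalization $\vartheta_k(a) = \|\vartheta_k\|_\infty = 1$. One must also take care to identify the witness $a$ of condition (ii) with the identity element of the algebra $(\mathcal{M}_\mathbb{C}(E), \diamond)$, and to keep the $\omega_j$ real so that \eqref{eq:fltc_timeshiftkern_positiv} is a bona fide positivity statement rather than a statement about complex quantities.
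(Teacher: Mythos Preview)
Your proposal is correct and follows essentially the same route as the paper: invoke the final statement of Proposition \ref{prop:fltc_prodform_kerncharact} for (ii) $\Rightarrow$ (i), and for (i) $\Rightarrow$ (ii) use Propositions \ref{prop:fltc_prodform_eigenf} and \ref{prop:fltc_trivTheta_L2basis}(b) together with simple spectrum to identify the trivializing functions with the normalized eigenfunctions $\varphi_j = \omega_j/\omega_j(a)$, then read off the product formula with $\bm{\nu}_{x,y} = \delta_x \diamond \delta_y$ and apply the equivalence in Proposition \ref{prop:fltc_prodform_kerncharact}. Your write-up is in fact more careful than the paper's, in that you explicitly verify $\sup_j \|\varphi_j\|_2 < \infty$ (via $\|\omega_j\|_\infty \geq \mb{m}(E)^{-1/2}$) and spell out why simplicity forces each $\vartheta_k$ to equal $\varphi_k$ exactly rather than merely span the same line.
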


\begin{proof}
The implication (ii) $\!\implies\!$ (i) follows from the final statement in Proposition \ref{prop:fltc_prodform_kerncharact}. Conversely, if (i) holds then the common maximizer property discussed above implies that $\Theta = \{\varphi_j\}_{j \in \mathbb{N}}$ where $\varphi_j(x) := {\omega_j(x) \over \omega_j(a)}$; from this it follows (by condition III of Definition \ref{def:fltc_fltcdef}) that \eqref{eq:fltc_tshypprodform} holds with $\bm{\nu}_{x,y} = \delta_x \diamond \delta_y$ and therefore (by Proposition \ref{prop:fltc_prodform_kerncharact}) the $\varphi_j$ satisfy the positivity condition \eqref{eq:fltc_timeshiftkern_positiv}.
\end{proof}

We note here that the assumption that $T_t^{(2)}$ (or, equivalently, the generator $\mathcal{G}^{(2)}$) has no eigenvalues with multiplicity greater than $1$ is known to hold for many strong Feller semigroups of interest. In fact, it is proved in \cite[Example 6.4]{henry2005} that the property that \emph{all the eigenvalues of the Neumann Laplacian are simple} is a generic property in the set of all bounded connected $\mathrm{C}^2$ domains $E \subset \mathbb{R}^d$. (The meaning of this is the following: given a bounded connected $\mathrm{C}^2$ domain $E$, consider the collection of domains $\mathfrak{M}_3(E) = \{h(E) \mid h:E \longrightarrow \mathbb{R}^d \text{ is a } \mathrm{C}^3\text{-diffeomorphism}\}$, which is a separable Banach space, see \cite{henry2005} for details concerning the appropriate topology. Let $\mathfrak{M}_{\mathrm{simp}} \subset \mathfrak{M}_3(E)$ be the subspace of all $\widetilde{E} \in \mathfrak{M}_3(E)$ such that all the eigenvalues of the Neumann Laplacian on $\widetilde{E}$ are simple. Then $\mathfrak{M}_{\mathrm{simp}}$ can be written as a countable intersection of open dense subsets of $\mathfrak{M}_3(E)$.) Similar results hold for the Laplace-Beltrami operator on a compact Riemannian manifold: it was proved in \cite{uhlenbeck1976} that, given a compact manifold $M$, the set of Riemannian metrics $g$ for which all the eigenvalues of the Laplace-Beltrami operator on $(M,g)$ are simple is a generic subset of the space of Riemannian metrics on $M$.

However, one should not expect the property of simplicity of spectrum to hold for Euclidean domains or Riemannian manifolds with symmetries. For instance, if a bounded domain $E \subset \mathbb{R}^2$ is invariant under the natural action of the dihedral group $\bm{D}_n$, then one can show (see \cite{helfferetal2002}) that the Dirichlet or Neumann Laplacian on $E$ has infinitely many eigenvalues with multiplicity $\geq 2$.

\section{The one-dimensional case} \label{sec:onedimensional}

As mentioned in the Introduction, the construction of probability-preserving convolution-like operators for one-dimensional diffusion semigroups generated by Sturm-Liouville operators has been the subject of many papers. As one would expect, the known existence theorems rely on a detailed study of the Sturm-Liouville eigenfunction expansion. We refer to \cite{berezansky1998,bloomheyer1994,sousaetal2019b,sousaetal2020} and references therein for background on convolutions of Sturm-Liouville type.

The aim of this section is to prove that the necessary and sufficient condition for existence of FLTCs stated in Corollary \ref{cor:fltc_prodform_simplspectr_iff} can be extended to one-dimensional (Feller) diffusion semigroups on possibly unbounded intervals and whose spectrum needs not be discrete. This will build upon and generalize our previous results in \cite[Section 4]{sousaetal2019b}.

We first review some notions from Sturm-Liouville theory. Consider the Sturm-Liouville operator
\begin{equation} \label{eq:onedim_SLop}
\ell = -{1 \over r} {d \over dx} \Bigl( p \, {d \over dx}\Bigr), \qquad x \in (a,b)
\end{equation}
where we assume that the coefficients are such that $p(x), r(x) > 0$ for all $x \in (a,b)$,\, $p, p', r, r' \in \mathrm{AC_{loc}}(a,b)$, the endpoint $a$ is regular or entrance, and the endpoint $b$ is regular, entrance or natural. (The classification as a regular, entrance, exit or natural endpoint refers to the Feller boundary classification, cf.\ e.g.\ \cite[Section 5.11]{ito2006}.) Denote by $w_\lambda(\bm{\cdot})$ the solution of the initial value problem
\[
\ell(w) = \lambda w \quad (a < x < b, \; \lambda \in \mathbb{C}), \qquad\; w(a) = 1, \qquad\; (pw')(a) = 0.
\]
(The existence of a unique solution is proved in \cite[Lemma 2.1]{sousaetal2019b}.) The Neumann realization of $\ell$ is the operator $(\mathcal{L}^{(2)}, \mathcal{D}(\mathcal{L}^{(2)}))$ defined as
\[
\mathcal{D}(\mathcal{L}^{(2)}) := \begin{cases}
\bigl\{ u \in L^2(r) \bigm| u, u' \in \mathrm{AC}_{\mathrm{loc}}(a,b), \; \ell(u) \in L^2(r), \; (pu')(a) = 0 \bigr\} \qquad & \text{ if } b \text{ is natural}\\[2pt]
\biggl\{ u \in L^2(r) \biggm| \!\!
\begin{array}{l}
u, u' \in \mathrm{AC}_{\mathrm{loc}}(a,b), \; \ell(u) \in L^2(r), \\[-3pt]
(pu')(a) = (pu')(b) = 0
\end{array} \! \biggr\} \qquad & \hspace{-.114\linewidth}\text{ if } b \text{ is regular or entrance}\\
\end{cases}
\]
and $\mathcal{L}^{(2)} u = \ell(u)$ for $u \in \mathcal{D}(\mathcal{L}^{(2)})$. Let $\mathcal{F}: L^2(r) \longrightarrow L^2(\mathbb{R}; \bm{\rho}_\mathcal{L})$ be the eigenfunction expansion of the Neumann realization of $\ell$, which is an isometric isomorphism given by \cite[Proposition 2.5]{sousaetal2019b}
\begin{equation} \label{eq:onedim_eigenexp}
(\mathcal{F} h)(\lambda) := \int_a^b h(x) \, w_\lambda(x) \, r(x) dx, \qquad (\mathcal{F}^{-1} \varphi)(x) = \int_{\bm{\Lambda}} \varphi(\lambda) \, w_\lambda(x) \, \bm{\rho}_\mathcal{L}(d\lambda)
\end{equation}
where $\bm{\rho}_\mathcal{L}$ is a locally finite positive Borel measure on $\mathbb{R}$ and $\bm{\Lambda} := \supp(\bm{\rho}_\mathcal{L})$. Let $I = [a,b)$ if $b$ is natural and $I = [a,b]$ if $b$ is regular or entrance, and denote by $\{T_t^{(2)}\}_{t \geq 0}$ the semigroup generated by $(\mathcal{L}^{(2)}, \mathcal{D}(\mathcal{L}^{(2)}))$. By \cite[Proposition 2.7]{sousaetal2019b}, the restriction of $\{T_t^{(2)}\}$ to $\mathrm{C}_\mathrm{c}(a,b)$ extends to a Feller process $\{T_t\}_{t \geq 0}$ on $I$ which for $h \in \mathrm{C}_0(I) \cap L^2(r)$ admits the representations
\begin{equation} \label{eq:onedim_fellergen_tpdf}
(T_t h)(x) = \int_a^b h(y) \, p(t,x,y) \, r(y) dy = \int_{\bm{\Lambda}} e^{-t\lambda} w_\lambda(x) \, (\mathcal{F} h)(\lambda)\, \bm{\rho}_\mathcal{L}(d\lambda) \qquad t > 0, \, x \in (a,b)
\end{equation}
where $p(t,x,y) = \int_{\bm{\Lambda}} e^{-t\lambda} \, w_\lambda(x) \, w_\lambda(y)\, \bm{\rho}_\mathcal{L}(d\lambda)$.

We are now able to state the advertised characterization for the existence of an FLTC for the Feller semigroup generated by the Neumann realization of $\ell$:

\begin{proposition} \label{prop:fltc_prodform_kerncharact_onedim}
Assume that $e^{-t\mskip0.5\thinmuskip \bm{\pmb\cdot}} \! \in L^2(\bm{\Lambda};\bm{\rho}_\mathcal{L})$ for all $t > 0$. Then the following are equivalent:
\begin{enumerate}
\item[\textbf{(i)}] There exists an FLTC for $\{T_t\}_{t \geq 0}$ with trivializing family $\Theta = \{w_\lambda\}_{\lambda \in \bm{\Lambda}}$.
\item[\textbf{(ii)}] We have $w_\lambda \in \mathrm{C}_\mathrm{b}(I)$ for all $\lambda \in \bm{\Lambda}$, and the function
\begin{equation} \label{eq:fltc_onedim_regkernel}
q_t(x,y,\xi) := \int_{\bm{\Lambda}\!} e^{-t\lambda\,} w_\lambda(x) \, w_\lambda(y) \, w_\lambda(\xi) \, \bm{\rho}_{\mathcal{L}}(d\lambda) \qquad \bigl(t > 0, \; x, y, \xi \in (a,b)\bigr)
\end{equation}
is well-defined as an absolutely convergent integral; moreover, the measures defined as $\bm{\nu}_{t,x,y}(d\xi) = q_t(x,y,\xi) \, r(\xi) d\xi$ are such that $\{\bm{\nu}_{t,x,y}\}_{0 < t \leq 1,\, x, y \in (a,\beta]}$ is, for each $\beta < b$, a tight family of probability measures on $I$.
\end{enumerate}
\end{proposition}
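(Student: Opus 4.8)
The plan is to adapt the compact-space argument of Proposition~\ref{prop:fltc_prodform_kerncharact} and its Corollary~\ref{cor:fltc_prodform_simplspectr_iff} to the interval $I$, with \emph{tightness} playing the role that compactness of the state space played there. The essential new phenomenon is that on a non-compact $I$ the mass of the regularized measures $\bm{\nu}_{t,x,y}$ could escape towards the endpoint $b$ as $t \downarrow 0$; tightness is precisely the hypothesis that rules this out and guarantees that the limiting product-formula measures are genuine elements of $\mathcal{P}(I)$. I would prove the two implications separately, in each case matching generalized Fourier transforms $\mu \mapsto (\mu(w_\lambda))_{\lambda \in \bm{\Lambda}}$ and exploiting the fact that such transforms determine measures uniquely.

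For \textbf{(i)}~$\Rightarrow$~\textbf{(ii)}: since the $w_\lambda$ are the trivializing functions, Proposition~\ref{prop:fltc_prodform_eigenf} gives $w_\lambda \in \mathrm{C}_\mathrm{b}(I)$ with $\|w_\lambda\|_\infty = 1$, and the standing hypothesis that $\lambda \mapsto e^{-t\lambda}$ lies in $L^2(\bm{\Lambda};\bm{\rho}_{\mathcal{L}})$ together with $|w_\lambda| \le 1$ makes the integral \eqref{eq:fltc_onedim_regkernel} absolutely convergent. To identify the measures $\bm{\nu}_{t,x,y}(d\xi) = q_t(x,y,\xi)\,r(\xi)\,d\xi$, I would compute their transforms: on one hand, since $\mathcal{F}$ is an isometric isomorphism and $q_t(x,y,\bm{\cdot})$ is the $\mathcal{F}^{-1}$-image of $\lambda \mapsto e^{-t\lambda}w_\lambda(x)w_\lambda(y)$ in \eqref{eq:onedim_eigenexp}, one gets $\int_I w_\lambda(\xi)\,q_t(x,y,\xi)\,r(\xi)\,d\xi = e^{-t\lambda}\,w_\lambda(x)\,w_\lambda(y)$; on the other hand, condition~IV and the multiplicativity $(\mu \diamond \nu)(w_\lambda) = \mu(w_\lambda)\,\nu(w_\lambda)$ (established in the proof of Proposition~\ref{prop:fltc_prodform_eigenf}) give $(\gamma_t \diamond \delta_x \diamond \delta_y)(w_\lambda) = e^{-t\lambda}\,w_\lambda(x)\,w_\lambda(y)$. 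By the uniqueness part of condition~III these two measures coincide, so $\bm{\nu}_{t,x,y} = \gamma_t \diamond \delta_x \diamond \delta_y \in \mathcal{P}(I)$. Uniform tightness over $0 < t \le 1$, $x,y \in (a,\beta]$ then follows from Prokhorov's theorem on the Polish space $I$: the sets $\{\gamma_t : t \in [0,1]\}$ (weakly compact, as $\{\gamma_t\}$ is a weakly continuous convolution semigroup) and $\{\delta_x \diamond \delta_y : x,y \in [a,\beta]\}$ (a continuous image of the compact set $[a,\beta]^2$ under $(x,y) \mapsto \delta_x \diamond \delta_y$) are weakly compact, so their image under the weakly continuous operation $\diamond$ is weakly compact, and the required family, being contained in it, is tight.

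For \textbf{(ii)}~$\Rightarrow$~\textbf{(i)}: fix $x,y \in (a,b)$ and use tightness to extract from any sequence $t_n \downarrow 0$ a weakly convergent subsequence of $\{\bm{\nu}_{t_n,x,y}\}$ with limit $\bm{\nu}_{x,y} \in \mathcal{P}(I)$, where tightness guarantees the limit retains unit mass. Because $w_\lambda \in \mathrm{C}_\mathrm{b}(I)$ and $\bm{\nu}_{t,x,y}(w_\lambda) = e^{-t\lambda}\,w_\lambda(x)\,w_\lambda(y)$, every subsequential limit satisfies $\bm{\nu}_{x,y}(w_\lambda) = w_\lambda(x)\,w_\lambda(y)$. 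I would then show that $\{w_\lambda\}_{\lambda \in \bm{\Lambda}}$ \emph{separates} $\mathcal{M}_\mathbb{C}(I)$: if $\mu(w_\lambda) = 0$ for all $\lambda$, then Fubini's theorem and \eqref{eq:onedim_fellergen_tpdf} give $\int_I (T_t h)\,d\mu = \int_{\bm{\Lambda}} e^{-t\lambda}\,(\mathcal{F}h)(\lambda)\,\mu(w_\lambda)\,\bm{\rho}_{\mathcal{L}}(d\lambda) = 0$ for all $h \in \mathrm{C}_0(I) \cap L^2(r)$ and $t > 0$, and letting $t \downarrow 0$ with $T_t h \to h$ yields $\mu = 0$. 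This separation forces the subsequential limit to be unique, so $\bm{\nu}_{t,x,y}$ converges weakly to $\bm{\nu}_{x,y}$ and the product formula $w_\lambda(x)\,w_\lambda(y) = \int_I w_\lambda\,d\bm{\nu}_{x,y}$ holds with $\bm{\nu}_{x,y} \in \mathcal{P}(I)$. Defining $\diamond$ by \eqref{eq:fltc_kerncharact_convdef}, the axioms are then verified as in the final part of Proposition~\ref{prop:fltc_prodform_kerncharact}: separation yields condition~III and the uniqueness used for commutativity, associativity, and the identity $\delta_a$ (via $w_\lambda(a) = 1$, whence $\bm{\nu}_{a,y} = \delta_y$); the fact that $\bm{\nu}_{x,y} \in \mathcal{P}(I)$ yields conditions~I--II; and condition~IV follows by taking $\gamma_t := p_{t,a}$, since $\gamma_t(w_\lambda) = (T_t w_\lambda)(a) = e^{-t\lambda}$ gives $(\gamma_t \diamond \delta_x)(w_\lambda) = e^{-t\lambda}\,w_\lambda(x) = p_{t,x}(w_\lambda)$ and $\gamma_{t+s} = \gamma_t \diamond \gamma_s$.

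The main obstacle, and the genuinely new feature compared with the compact case, is the possible loss of mass at the endpoint $b$: absent compactness, a weak limit of probability measures need not be a probability measure. Tightness is the precise remedy, and in the direction \textbf{(i)}~$\Rightarrow$~\textbf{(ii)} the crux is to deduce it \emph{uniformly} in $(t,x,y)$, which I handle by realizing the $\bm{\nu}_{t,x,y}$ as continuous images of compact parameter sets under $\diamond$ and invoking Prokhorov. A secondary technical point, absent when the spectrum is discrete, is that the separation property of $\{w_\lambda\}$ must be obtained through the continuous eigenfunction expansion \eqref{eq:onedim_eigenexp}--\eqref{eq:onedim_fellergen_tpdf} and a Fubini argument, rather than from an orthonormal basis as in Proposition~\ref{prop:fltc_trivTheta_L2basis}.
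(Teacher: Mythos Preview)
Your proposal is correct and follows essentially the same strategy as the paper. In both directions you identify the regularized measures $\bm{\nu}_{t,x,y}$ with $\gamma_t \diamond \delta_x \diamond \delta_y$, pass to the limit $t\downarrow 0$ via tightness, and verify the axioms through the separation property of $\{w_\lambda\}$. Two differences in execution are worth noting: for (i)$\Rightarrow$(ii) you deduce tightness via Prokhorov from the weak compactness of $\{\gamma_t\}_{t\in[0,1]}$ and $\{\delta_x \diamond \delta_y\}_{x,y\in[a,\beta]}$, whereas the paper simply observes that $(t,x)\mapsto p_{t,x}$ is weakly continuous on the Feller side; and for (ii)$\Rightarrow$(i) you identify subsequential limits by testing directly against $w_\lambda \in \mathrm{C}_\mathrm{b}(I)$, while the paper tests against $g \in \mathcal{D}(\mathcal{L}^{(2)}) \cap \mathrm{C}_\mathrm{c}(I)$ and lets $(t_n,x_n,y_n)\to(0,x,y)$ jointly. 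The latter point matters slightly: by varying $(x_n,y_n)$ as well, the paper simultaneously obtains the weak continuity of $(x,y)\mapsto \bm{\nu}_{x,y}$ needed for axiom~I, which you gloss over when writing ``the fact that $\bm{\nu}_{x,y}\in\mathcal{P}(I)$ yields conditions I--II''; your argument extends to cover this with no new idea, but it should be stated.
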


\begin{proof}
\textbf{\emph{(i)$\!\implies\!$(ii):\;}} Let $\diamond$ be an FLTC for $\{T_t\}_{t \geq 0}$ and $p_{t,x} = \mu_t \diamond \delta_x$ the transition kernel of $\{T_t\}_{t \geq 0}$. Since the $w_\lambda$ are multiplicative linear functionals on the Banach algebra $(\mathcal{M}_\mathbb{C}(I), \diamond)$, we have $\|w_\lambda\|_\infty = 1$ for all $\lambda \in \bm{\Lambda}$, hence the right-hand side of \eqref{eq:fltc_onedim_regkernel} is absolutely convergent. Moreover,
\[
(\mu_t \diamond \delta_x \diamond \delta_y)(w_\lambda) = e^{-t\lambda\,} w_\lambda(x) \, w_\lambda(y) = \mathcal{F}[q_t(x,y,\bm{\cdot})](\lambda) \qquad \bigl(t > 0, \; x, y \in (a,b)\bigr)
\]
and it follows that for $g \in \mathrm{C}_\mathrm{c}(I)$ 
\begin{align*}
\int_I g(\xi) \, (\mu_t \diamond \delta_x \diamond \delta_y)(d\xi) & = \lim_{s \downarrow 0} \int_I (T_s g)(\xi) \, (\mu_t \diamond \delta_x \diamond \delta_y)(d\xi) \\
& = \lim_{s \downarrow 0} \int_{\bm{\Lambda}} (\mathcal{F}g)(\lambda) \, e^{-(t+s)\lambda\, } w_\lambda(x) \, w_\lambda(y) \, \bm{\rho}(d\lambda) \\
& = \int_I g(\xi) \, q_t(x,y,\xi) \, r(\xi) d\xi
\end{align*}
where we used \eqref{eq:onedim_fellergen_tpdf}, Fubini's theorem and the isometric property of $\mathcal{F}$. Since $g$ is arbitrary, this shows that the measures $(\mu_t \diamond \delta_x \diamond \delta_y)(d\xi)$ and $\bm{\nu}_{t,x,y}(d\xi) := q_t(x,y,\xi) \, r(\xi) d\xi$ coincide. Consequently, $\bm{\nu}_{t,x,y} \in \mathcal{P}(I)$ for all $t > 0$ and $x, y \in (a,b)$. Since $\{T_t\}$ is a Feller process, the mapping $(t,x) \mapsto p_{t,x} = \mu_t \diamond \delta_x$ is continuous on $[0,\infty) \times I$ with respect to the weak topology of measures, and therefore the family $\{\bm{\nu}_{t,x,y}\}_{0 < t \leq 1,\, x, y \in (a,\beta]}$ is relatively compact, hence tight. \\[-12pt]

\textbf{\emph{(ii)$\!\implies\!$(i):\;}} In the case where $b$ is regular or entrance, this implication follows from Proposition \ref{prop:fltc_prodform_kerncharact}. Assume that (ii) holds and that $b$ is natural. It follows from \eqref{eq:onedim_eigenexp} that
\begin{equation} \label{eq:fltc_prodform_kerncharact_onedim_pf2}
e^{-t\lambda} \, w_\lambda(x) \, w_\lambda(y) = \int_I w_\lambda(\xi)\, \bm{\nu}_{t,x,y}(d\xi) \qquad \bigl(t > 0, \; x, y \in (a,b), \; \lambda \in \bm{\Lambda}\bigr)
\end{equation}
where the integral converges absolutely. Since $\{\bm{\nu}_{t,x,y}\}_{0 < t \leq 1,\, x, y \in (a,\beta]}$ is tight, given $x,y \in I$ there exists a sequence $\{(t_n,x_n,y_n)\}_{n \in \mathbb{N}} \subset (0,+\infty) \times (a,b) \times (a,b)$ such that $(t_n,x_n,y_n) \to (0,x,y)$ and the measures $\bm{\nu}_{t_n,x_n,y_n}$ converge weakly to a measure $\bm{\nu}_{x,y} \in \mathcal{P}(I)$ as $n \to \infty$. Moreover, if $(x,y) \neq (a,a)$ and $\bm{\nu}_{x,y}^1, \bm{\nu}_{x,y}^2$ denote two such limits with approximating sequences $\{(t_n^j,x_n^j,y_n^j)\}_{n \in \mathbb{N}}$, then for $j = 1,2$ and $g \in \mathcal{D}(\mathcal{L}^{(2)}) \cap \mathrm{C}_\mathrm{c}(I)$ we have
\[
\int_I g(\xi) \, \bm{\nu}_{x,y}^{j}(d\xi) = \lim_{n \to \infty} \int_{\bm{\Lambda}} e^{-t_n^{j}\lambda\,} w_\lambda(x_n^{j}) \, w_\lambda(y_n^{j}) \, (\mathcal{F}g)(\lambda) \, \bm{\rho}_{\mathcal{L}}(d\lambda) = \int_{\bm{\Lambda}} w_\lambda(x) \, w_\lambda(y) \, (\mathcal{F}g)(\lambda) \, \bm{\rho}_{\mathcal{L}}(d\lambda)
\]
so that $\bm{\nu}_{x,y}^1 = \bm{\nu}_{x,y}^2$. This shows that the measures $\bm{\nu}_{t,\widetilde{x},\widetilde{y}}$ converge weakly as $(t,\widetilde{x},\widetilde{y}) \to (0,x,y)$ to a unique limit $\bm{\nu}_{x,y}$ which is characterized by the identity $\int_I g(\xi) \, \bm{\nu}_{x,y}(d\xi) = \int_{\bm{\Lambda}} w_\lambda(x)\, w_\lambda(y)\, (\mathcal{F}g)(\lambda)\, \bm{\rho}_\mathcal{L}(d\lambda)$\, ($g \in \mathcal{D}^{(2,0)}$, $x,y \in I$, $(x,y) \neq (a,a)$). Using this fact and the reasoning in the proof of \cite[Proposition 5.2(ii)]{sousaetal2019b}, we can verify that each measure $\mu \in \mathcal{M}_\mathbb{C}(I)$ is uniquely determined by the family of integrals $\{\mu(w_\lambda)\}_{\lambda \in \bm{\Lambda}}$. From this it follows, by taking limits in both sides of \eqref{eq:fltc_prodform_kerncharact_onedim_pf2}, that the measure $\bm{\nu}_{a,a} = \delta_a$ is the unique weak limit of $\bm{\nu}_{t_n,x_n,y_n}$ as $(t_n,x_n,y_n) \to (0,a,a)$.

For $\mu, \nu \in \mathcal{M}_\mathbb{C}(I)$, define $(\mu \diamond \nu)(d\xi) := \int_I \int_I \bm{\nu}_{x,y}(d\xi) \, \mu(dx) \, \nu(dy)$, where $\bm{\nu}_{x,y} \in \mathcal{P}(I)$ is the unique weak limit described above. Then \eqref{eq:fltc_prodform_kerncharact_onedim_pf2} yields that
\begin{equation} \label{eq:fltc_prodform_kerncharact_onedim_pf4}
w_\lambda(x) \, w_\lambda(y) = \int_I w_\lambda(\xi)\, (\delta_x \diamond \delta_y)(d\xi) \qquad \bigl(x, y \in I, \; \lambda \in \bm{\Lambda}\bigr)
\end{equation}
and, consequently, condition III of Definition \ref{def:fltc_fltcdef} holds with $\Theta = \{w_\lambda\}_{\lambda \in \bm{\Lambda}}$. It is also clear that $(\mathcal{M}_\mathbb{C}(I), \diamond)$ is a commutative Banach algebra over $\mathbb{C}$ with identity element $\delta_a$ and that $\mathcal{P}(I) \diamond \mathcal{P}(I) \subset \mathcal{P}(I)$. From the tightness of $\{\bm{\nu}_{t,x,y}\}_{0 < t \leq 1,\, x, y \in (a,\beta]}$\, ($\beta < b$) it follows that the family of limits $\{\bm{\nu}_{x,y}\}_{x, y \in (a,\beta]}$ is also tight; it then follows from \eqref{eq:fltc_prodform_kerncharact_onedim_pf4} that the map $(x,y) \mapsto \bm{\nu}_{x,y}$ is continuous with respect to the weak topology and, consequently, $(\mu,\nu) \mapsto \mu \diamond \nu$ is also continuous. Finally, it follows from \eqref{eq:onedim_eigenexp}--\eqref{eq:onedim_fellergen_tpdf} that the transition kernel $p_{t,x}(dy) \equiv p(t,x,y) \, r(y) dy$ is such that $p_{t,x}(w_\lambda) = p_{t,a}(w_\lambda) w_\lambda(x)$ for all $t > 0$ $x \in I$ and $\lambda \in \bm{\Lambda}$. We thus have $p_{t,x} = p_{t,a} \diamond \delta_x$, hence condition IV of Definition \ref{def:fltc_fltcdef} holds.
\end{proof}

We note that the assumption that $e^{-t\mskip0.5\thinmuskip \bm{\pmb\cdot}} \! \in L^2(\bm{\Lambda};\bm{\rho}_\mathcal{L})$ holds for all Sturm-Liouville operators whose left endpoint $a$ is regular, and it also holds for a fairly large class of operators for which the endpoint $a$ is entrance \cite{kotani1975,kotani2007}.

\section{Connection with hyperbolic and ultrahyperbolic equations} \label{sec:ultrahyperbolic}

In this section we briefly highlight the role of hyperbolic and ultrahyperbolic partial differential equations in the construction of an FLTC for a given Feller semigroup.

Consider first the one-dimensional case of Feller semigroups generated by Sturm-Liouville operators. By Proposition \ref{prop:fltc_prodform_kerncharact_onedim}, in order to prove the existence of an FLTC we need to ensure that
\[
\mathcal{Q}_{t,h}(x,y) := \int_{\bm{\Lambda}} e^{-t\lambda\,} w_\lambda(x) \, w_\lambda(y) \, (\mathcal{F}h)(\lambda) \, \bm{\rho}_{\mathcal{L}}(d\lambda) \geq 0 \qquad \text{for all }\, h \in \mathrm{C}_\mathrm{c}(I) \,\text{ and }\, t > 0.
\]
The function $\mathcal{Q}_{t,h}(x,y)$ is a solution of $\ell_x u = \ell_y u$ (where $\ell_x$ is the Sturm-Liouville operator \eqref{eq:onedim_SLop} acting on the variable $x$) satisfying the boundary conditions $\mathcal{Q}_{t,h}(x,a) = (T_t h)(x)$ and $\partial_y \mathcal{Q}_{t,h}(x,a) = 0$.
Much of the existing work on one-dimensional convolutions relies heavily on the study of the properties of this hyperbolic Cauchy problem, namely existence, uniqueness and positivity of solution (see \cite{berezansky1998,bloomheyer1994,sousaetal2019b,sousaetal2020} and references therein).

Assume now that $E$ is a compact metric space and the Feller semigroup $\{T_t\}_{t \geq 0}$ has a dense orthogonal set of eigenfunctions $\{\varphi_j\}_{j \in \mathbb{N}}$ such that $\varphi_1 = \mathds{1}$ and $\varphi_j(a) = \|\varphi_j\|_\infty = 1$ for all $j \in \mathbb{N}$. We saw in Proposition \ref{prop:fltc_prodform_kerncharact} that the positivity property
\[
\mathcal{Q}_{t,h}(x,y) := \sum_{j=1}^\infty {1 \over \|\varphi_j\|_2^2} e^{-t \lambda_j} \varphi_j(x) \, \varphi_j(y) \, \langle h, \varphi_j \rangle \geq 0 \qquad \text{for all }\, h \in \mathrm{C}(E) \,\text{ and }\, t > 0.
\]
is crucial in proving the existence of an FLTC for $\{T_t\}$. The function $\mathcal{Q}_{t,h}(x,y)$ is now a solution of $\mathcal{G}_x u = \mathcal{G}_y u$ (where $\mathcal{G}_x$ is the Feller generator $\mathcal{G}$ acting on the variable $x$) such that $\mathcal{Q}_{t,h}(x,a) = (T_t h)(x)$. Moreover, since the point $a$ is a maximizer of all the eigenfunctions $\varphi_j$, the function $\mathcal{Q}_{t,h}(x,y)$ also satisfies (at least formally) the boundary condition $(\nabla_y \mathcal{Q}_{t,h})(x,a) = 0$. (This could be justified e.g.\ by proving that the series can be differentiated term by term. In the next section we will see that this argument can be applied to the Neumann Laplacian on suitable bounded domains of $\mathbb{R}^d$.) This indicates that, as for the one-dimensional problem, the (positivity) properties of the boundary value problem
\begin{equation} \label{eq:fltc_ultrahyp_bvp}
\mathcal{G}_x u = \mathcal{G}_y u, \qquad u(x,a) = u_0(x), \qquad (\nabla_y u)(x,a) = 0
\end{equation}
are related with the problem of constructing a convolution associated with the given strong Feller semigroup.

Consider Examples \ref{exam:fltc_feller_euclidean}--\ref{exam:fltc_feller_boundeddom} or, more generally, any example of a strong Feller semigroup generated by a uniformly elliptic differential operator on $E \subset \mathbb{R}^d$\, ($d>2$). In this context, the principal part of the differential operator $\mathcal{G}_x - \mathcal{G}_y$ has $d$ terms ${\partial^2 \over \partial x_j^2}$ with positive coefficient and $d$ terms ${\partial^2 \over \partial y_j^2}$ with negative coefficient. Such partial differential operators are often said to be of \emph{ultrahyperbolic type} (cf.\ e.g.\ \cite[\S I.5]{petrovsky1954} and \cite[Definition 2.6]{renardyrogers2004}). According to the results of \cite{craigweinstein2009} and \cite[\S VI.17]{courant1962}, the solution for the boundary value problem \eqref{eq:fltc_ultrahyp_bvp} is, in general, not unique. The existing theory on well-posedness of ultrahyperbolic boundary value problems is, in many other respects, rather incomplete; in particular, as far as we know, no maximum principles have been determined for such problems. Adapting the partial differential equation techniques commonly used in the study of one-dimensional convolutions to problems defined on multidimensional spaces is, therefore, a highly nontrivial problem.

\section{Nonexistence results for convolutions on multidimensional domains} \label{sec:nonexist_multidim}

The results of Section \ref{sec:characterizations} show that the existence of an FLTC for a given multidimensional diffusion process depends on two conditions -- the common maximizer property and the positivity of an ultrahyperbolic boundary value problem -- for which there are no reasons to hope that they can be established other than in special cases.
In particular, the common maximizer property is rather counter-intuitive in the multidimensional setting: while in the case of one-dimensional diffusions it is natural that the properties of a Sturm-Liouville operator enforce one of the endpoints of the interval to be a common maximizer, this is no longer natural on bounded domains of $\mathbb{R}^d$ with differentiable boundary or on smooth manifold because one no longer expects that one of the points of the boundary will play a special role.

In fact, as we will see in Subsection \ref{sub:eigenexp_critical_nonex}, under certain conditions one can prove that (reflected) Brownian motions on bounded domains of $\mathbb{R}^d$ or on compact Riemannian manifolds (cf.\ Examples \ref{exam:fltc_feller_boundeddom} and \ref{exam:fltc_feller_riemannian} respectively) do not satisfy the common maximizer property and, therefore, it is not possible to construct an associated FLTC. The preceding Subsection \ref{sub:nonexist_examples} discusses some examples which are useful in better understanding the geometrical properties of the eigenfunctions that are usually encountered in the multidimensional case.

\subsection{Special cases and numerical examples} \label{sub:nonexist_examples}

The first example illustrates the fact that the construction of the convolution becomes trivial if the generator of the multidimensional diffusion decomposes trivially (via separation of variables) into one-dimensional Sturm-Liouville operators for which an associated FLTC exists.

\begin{example}[Neumann eigenfunctions of a $d$-dimensional rectangle] \label{exam:eigenf_rectangle}
Consider the $d$-dimensional rectangle $E = [0,\beta_1] \times \ldots \times [0,\beta_d] \subset \mathbb{R}^d$. The (nonnormalized) eigenfunctions of the Neumann Laplacian on $E$ and the associated eigenvalues are given by
\[
\varphi_{j_1,\ldots,j_d}(x_1,\ldots,x_d) = \prod_{k=1}^d \cos\Bigl(\pi j_k {x_k \over \beta_k}\Bigr), \qquad \lambda_{j_1,\ldots,j_d} = \pi^2 \sum_{k=1}^d {j_k^2 \over \beta_k^2} \qquad (j_1, \ldots, j_d \in \mathbb{N}_0).
\]
These eigenfunctions constitute an orthogonal basis of $L^2(E,dx)$. The point $(0,\ldots,0)$ is, obviously, a maximizer of all the functions $\varphi_{j_1,\ldots,j_d}$, thus the common maximizer property holds. 

The eigenfunctions $\varphi_{j_1,\ldots,j_d}(\bm{\cdot})$ are the product of the eigenfunctions of the Sturm-Liouville operators $-{d^2 \over dx_k^2}$ on the interval $[0,\beta_k]$ (with Neumann condition).
It is known \cite[Example 3.4.6]{bloomheyer1994} that the two-point support convolution $\smash{\delta_x \convccirc{\beta_k} \delta_y}\rule[-0.68\baselineskip]{0pt}{0pt} = {1 \over 2}(\delta_{|x-y|} + \delta_{\beta_k-|\beta_k-x-y|})$ is the convolution of a hypergroup of compact type.
Therefore, it is an FLTC for the semigroup generated by this Sturm-Liouville operator.
Accordingly, one can easily check that the product of the convolutions $([0,\beta_1], \smash{\convccirc{\beta_1}}), \ldots, ([0,\beta_d], \smash{\convccirc{\beta_d}})$, defined as 
\[
(\mu \diamond \nu)(\bm{\cdot}) = \int_E \int_E \bigl((\delta_{x_1} \convccirc{\beta_1} \delta_{y_1}) \otimes \ldots \otimes (\delta_{x_d} \convccirc{\beta_d} \delta_{y_d})\bigr)(\bm{\cdot})\, \mu(dx) \mskip0.5\thinmuskip \nu(dy),
\]
satisfies all the requirements of Definition \ref{def:fltc_fltcdef}, i.e.\ it is an FLTC for the reflected Brownian motion on $E$ (cf.\ Example \ref{exam:fltc_feller_boundeddom}).
\end{example}

Next we present three special cases of circular regions where one can assess whether the common maximizer property holds by analysing the known closed-form expressions for the eigenfunctions.

\begin{example}[Neumann eigenfunctions of disks and balls]
Let $E \subset \mathbb{R}^2$ be the closed disk of radius $R$. The eigenfunctions of the Neumann Laplacian on $E$ are given, in polar coordinates, by
\begin{align*}
\varphi_{0,k}(r,\theta) & = J_0(j_{0,k}' \mskip0.5\thinmuskip \tfrac{r}{R}) \\
\varphi_{m,k,1}(r,\theta) & = J_m(j_{m,k}' \mskip0.5\thinmuskip \tfrac{r}{R}) \cos(m\theta) \\
\varphi_{m,k,2}(r,\theta) & = J_m(j_{m,k}' \mskip0.5\thinmuskip \tfrac{r}{R}) \sin(m\theta)
\end{align*}
where $m,k \in \mathbb{N}$,\, $J_m(\bm{\cdot})$ is the Bessel function of the first kind, and $j_{m,k}'$ stands for the $k$-th (simple) zero of the derivative $J_m'(\bm{\cdot})$ (see \cite[Section 7.2]{komornikloreti2005} and \cite[Proposition 2.3]{helffersundqvist2016}). The corresponding eigenvalues are $\lambda_{0,k} = (j_{0,k}'/R)^2$ (with multiplicity $1$) and $\lambda_{m,k} = (j_{m,k}'/R)^2$ (with multiplicity $2$). It is known from \cite[pp.\ 485, 488]{watson1944}) that for $m \geq 1$ we have $|J_m(j'_{m,k})| > |J_m(x)|$ for all $x > j'_{m,k}$, hence the eigenfunctions $\varphi_{m,k,1}$ and $\varphi_{m,k,2}$ ($m \geq 1$) attain their global maximum on the circle $\bigl\{ r={j'_{m,1} \over j'_{m,k}}R \bigr\}$. This shows, in particular, that no orthogonal basis of $L^2(E,dx)$ composed of Neumann eigenfunctions can satisfy the common maximizer property.

More generally, if $E \subset \mathbb{R}^d$ is a closed $d$-ball with radius $R$, then the eigenfunctions of the Neumann Laplacian on $E$ are
\[
\varphi_{m,k}(r,\theta) = r^{1-{d \over 2}} J_{m-1+{d \over 2}}(c_{m,k} \tfrac{r}{R}) \, H_m(\theta)
\]
where $(r,\theta)$ are hyperspherical coordinates, $m \in \mathbb{N}_0$, $k \in \mathbb{N}$, $H_m$ is a spherical harmonic of order $m$ (see \cite{komornikloreti2005}) and $c_{m,k}$ is the $k$-th zero of the function $\xi \mapsto (1-{d \over 2}) J_{m-1+{d \over 2}}(\xi) + \xi J_{m-1+{d \over 2}}'(\xi)$. The corresponding eigenvalues are $\lambda_{m,k} = c_{m,k}^2$, whose multiplicity is equal to the dimension of the space of spherical harmonics of order $m$. By similar arguments we conclude that the common maximizer property does not hold.
\end{example}

\begin{example}[Neumann eigenfunctions of a circular sector]
Let $E \subset \mathbb{R}^2$ be the sector of angle $\pi \over q$, $E = \{(r\cos\theta, r\sin\theta) \mid 0 \leq r \leq 1, \, 0 \leq \theta \leq {\pi \over q}\}$, where $q \in \mathbb{N}$. The eigenfunctions of the Neumann Laplacian on $E$ and the associated eigenvalues (which have multiplicity $1$, cf.\ \cite{bobkov2018}) are given by \vspace{-2pt}
\[
\varphi_{m,k}(r,\theta) = \cos(qm\theta) J_{qm}(j'_{qm,k}\tfrac{r}{R}), \qquad \lambda_{m,k} = (j'_{qm,k}/R)^2. \vspace{-2pt}
\]
As in the previous example it follows that the global maximizer of $\varphi_{m,k}$ lies in the arc $\bigl\{ r={j'_{qm,1} \over j'_{qm,k}}R \bigr\}$, so that the common maximizer property does not hold.
\end{example}

\begin{example}[Neumann eigenfunctions of a circular annulus]
If $E \subset \mathbb{R}^2$ is the annulus $\{(r,\theta) \mid r_0 \leq r \leq R, \, 0 \leq \theta < 2\pi\}$, where $0 < r_0 < R < \infty$, then the Neumann eigenfunctions on $E$ are
\begin{equation} \label{eq:neumeig_explicit_annulus}
\begin{aligned}
\varphi_{0,k}(r,\theta) & = J_0(c_{0,k} \tfrac{r}{R}) Y_0'(c_{0,k}) - J_0'(c_{0,k}) Y_0(c_{0,k} \tfrac{r}{R}) \\
\varphi_{m,k,1}(r,\theta) & = \bigl(J_m(c_{m,k} \tfrac{r}{R}) Y_m'(c_{m,k}) - J_m'(c_{m,k}) Y_m(c_{m,k} \tfrac{r}{R}) \bigr) \cos(m\theta) \\
\varphi_{m,k,2}(r,\theta) & = \bigl(J_m(c_{m,k} \tfrac{r}{R}) Y_m'(c_{m,k}) - J_m'(c_{m,k}) Y_m(c_{m,k} \tfrac{r}{R}) \bigr) \sin(m\theta)
\end{aligned}
\end{equation}
where $m,k = 1,2,\ldots$,\, $Y_m(\bm{\cdot})$ is the Bessel function of the second kind \cite[\S10.2]{dlmf} and $c_{m,k}$ is the $k$-th zero of the function $\xi \mapsto J_m'(\tfrac{r_0}{R} \mskip0.7\thinmuskip \xi) Y_m'(\xi) - J_m'(\xi) Y_m'(\tfrac{r_0}{R} \mskip0.7\thinmuskip \xi)$. The associated eigenvalues are $\lambda_{m,k} = (c_{m,k}^2/R^2)$. Figure \ref{fig:contour_annulus} presents the contour plots of some of the Neumann eigenfunctions, obtained in two different ways: in panel (a) using the explicit representations \eqref{eq:neumeig_explicit_annulus}, where the constants $c_{m,k}$ are computed numerically with the help of the \texttt{NSolve} function of \emph{Wolfram Mathematica}; and in panel (b) using a numerical approximation of the eigenvalues and eigenfunctions which was computed via the \texttt{NDEigensystem} routine of \emph{Wolfram Mathematica}. Since the eigenvalues $\lambda_{m,k}$ with $m \geq 1$ have multiplicity $2$, the plots obtained by these two approaches differ by a rotation. The results indicate that some of the eigenfunctions (those associated with the first zero $c_{m,1}$) attain their maximum at the outer circle $\{r=R\}$, while other eigenfunctions (those associated with the higher zeros $c_{m,k}$, $k \geq 2$) attain their maximum either at the inner circle $\{r=r_0\}$ or at the interior of the annulus. It is therefore clear that the Neumann eigenfunctions do not satisfy the common maximizer property.

\begin{figure}[t]
 \centering
 \makebox[8cm][c]{
 \renewcommand{\tabcolsep}{1pt}
    	\begin{tabular}{c}    	
    	\vspace*{-2pt}  \includegraphics[width=\textwidth]{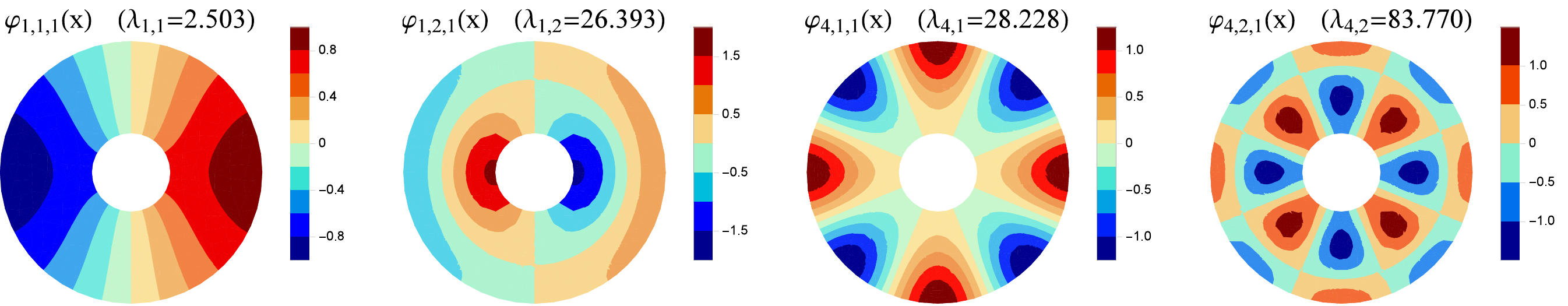}
		\\
		\small \textbf{(a)} Closed form expressions
    	\vspace*{8pt} \\
		\includegraphics[width=\textwidth]{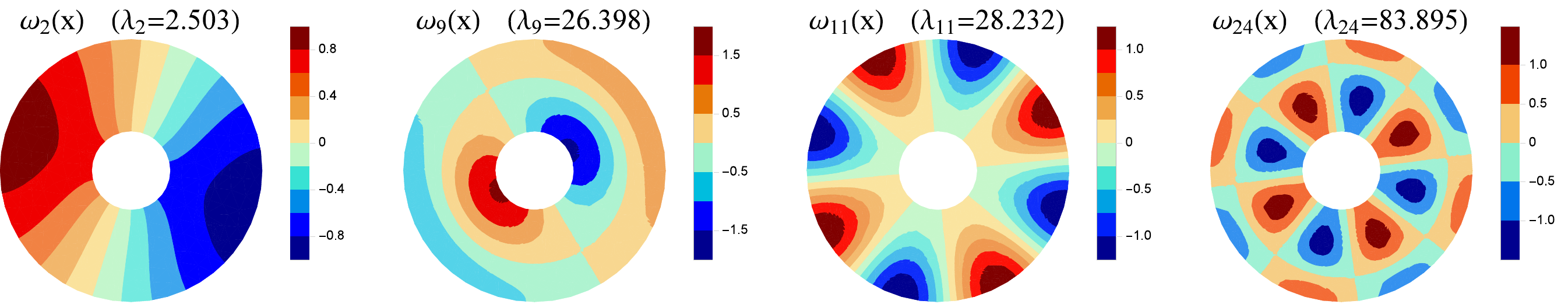}
		\\
		\small \textbf{(b)} Numerical approximation
		\vspace*{-4pt}
    	\end{tabular}
 }
 \caption[Contour plots of the Neumann eigenfunctions of a circular annulus with inner radius $r_0 = 0.3$ and outer radius $R = 1$]{Contour plots of the Neumann eigenfunctions of a circular annulus with inner radius $r_0 = 0.3$ and outer radius $R = 1$. In panel (b), the notation $\omega_k$ refers to the orthogonal eigenfunction associated with the $k$-th largest eigenvalue $\lambda_k$. In both panels the eigenfunctions were normalized so that their $L^2$ norm equals $1$. Similar results were obtained for other values of $r_0 \over R$.}
 \label{fig:contour_annulus}
\end{figure}
\end{example}

There are few other examples of domains of $\mathbb{R}^d$ for which the Neumann eigenfunctions can be computed in closed form. However, in the general case of an arbitrary domain $E \subset \mathbb{R}^2$ it is still possible to find out whether the common maximizer property holds by inspecting the contour plots of the eigenfunctions; these can be computed, for a given bounded domain of $\mathbb{R}^2$, by the same procedure which was used to produce the plots in panel (b) of Figure \ref{fig:contour_annulus}.

This is illustrated in Figures \ref{fig:contour_ellipse_def} and \ref{fig:contour_pentagon_def}, which present the contour plots of the first eigenfunctions of two non-symmetric bounded regions of $\mathbb{R}^2$ with smooth boundary. As we can see, the eigenfunctions attain their maximum values at different points which lie either on the boundary or at the interior of the domain. Note also that the associated eigenvalues are simple, which is unsurprising since the domain has no symmetries (cf.\ comment after Corollary \ref{cor:fltc_prodform_simplspectr_iff}).

\begin{figure}[t]
 \centering
 \makebox[8cm][c]{
 \adjustbox{trim={0\width} {.07\height} {0\width} {0\height},clip}%
  {\includegraphics[width=\textwidth]{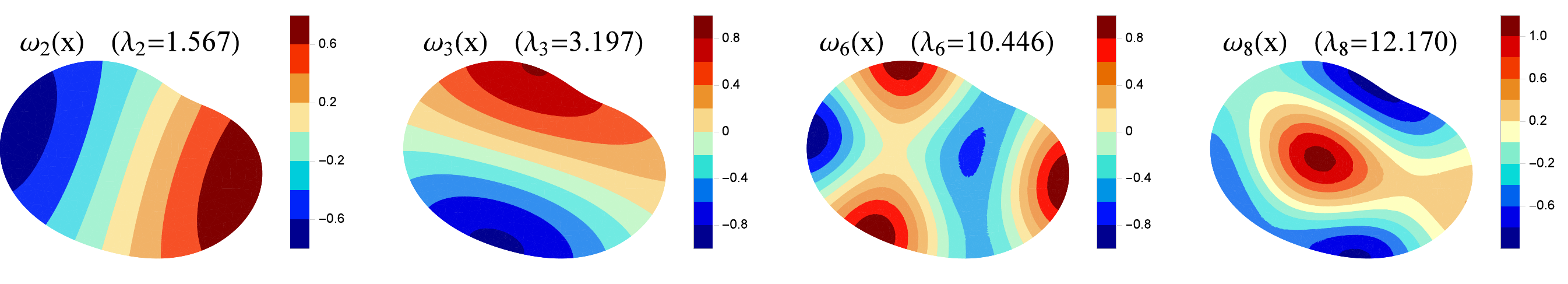}}
 }
 \caption[Contour plots of some eigenfunctions of a region obtained by a non-symmetric deformation of an ellipse]{Contour plots of some eigenfunctions of a region obtained by a non-symmetric deformation of an ellipse. (As above, we denote by $\omega_k$ the Neumann eigenfunction associated with the $k$-th largest eigenvalue $\lambda_k$, and the plots were produced using the \texttt{NDEigensystem} function of \emph{Wolfram Mathematica}.)}
 \label{fig:contour_ellipse_def}
\end{figure}

\begin{figure}[t]
 \centering
 \makebox[8cm][c]{
 \includegraphics[width=\textwidth]{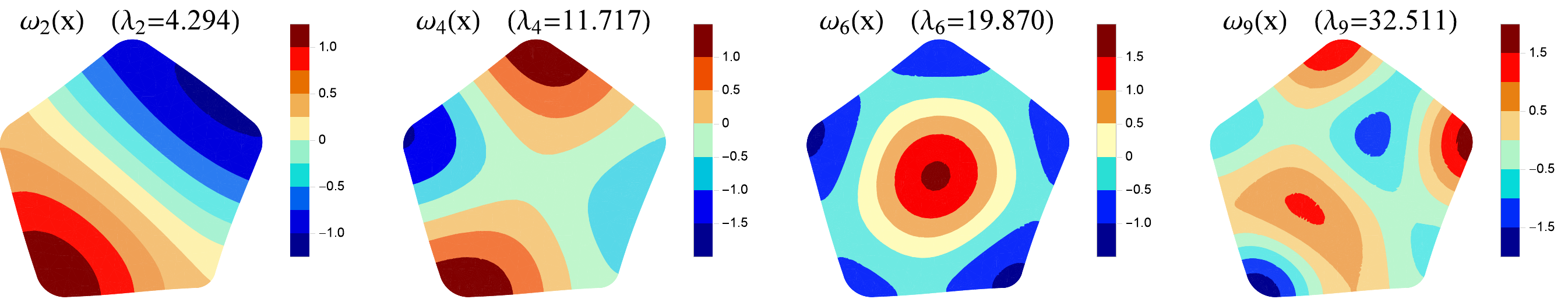}
 }
 \caption{Contour plots of the Neumann eigenfunctions of a region obtained by a non-symmetric deformation of a pentagon with smoothed corners.}
 \label{fig:contour_pentagon_def}
\end{figure}

\begin{remark}[Connection with the hot spots conjecture]
All the examples presented above have the property that \emph{if $\varphi_2$ is an eigenfunction associated with the smallest nonzero Neumann eigenvalue $\lambda_2$, then the maximum and minimum of $\varphi_2$ are attained at the boundary $\partial E$}.
This is the so-called \emph{hot spots conjecture} of J.\ Rauch, which asserts that this property should hold on any bounded domain of $\mathbb{R}^d$.
The physical intuition behind this conjecture is that, for large times, the hottest point on an insulated body with a given initial temperature distribution should converge towards the boundary of the body.

The hot spots conjecture has been extensively studied in the last two decades: it has been shown that the conjecture holds on convex planar domains with a line of symmetry \cite{banuelosburdzy1999,pascu2002}, on convex domains $E \subset \mathbb{R}^2$ with ${\mathrm{diam}(E)^2 \over \mathrm{Area}(E)} < 1.378$ \cite{miyamoto2009} and on any Euclidean triangle \cite{judgemondal2020} (for further positive results see \cite{judgemondal2020} and references therein). On the other hand, some counterexamples have also been found, namely certain domains with holes \cite{burdzy2005}.

The common maximizer property can be interpreted as an extended hot spots conjecture: instead of requiring that the maximum of (the absolute value of) the second Neumann eigenfunction is attained at the boundary, one requires that the maximum of all the eigenfunctions is attained at a common point of the boundary. The negative result of Corollary \ref{cor:fltc_nocommoncrit_smooth} below shows that the location of the hottest point in the limiting distribution (as time goes to infinity) of the temperature of an insulated body depends on the initial temperature distribution.

The common maximizer property and the hot spots conjecture are subtopics of the more general problem of understanding the topological and geometrical structure of Laplacian eigenfunctions, which is the subject of a huge amount of literature. We refer to \cite{grebenkovnguyen2013,jainsamajdar2017} for a survey of known facts, applications and related references.
\end{remark}

\subsection{Eigenfunction expansions, critical points and nonexistence theorems} \label{sub:eigenexp_critical_nonex}

We now proceed to discuss the (failure of the) common maximizer property for reflected Brownian motions on general bounded domains of $\mathbb{R}^d$\, ($d \geq 2$).
Our strategy for disproving the existence of common maximizers is based on two observations.
The first is quite obvious: if $a$ is a common maximizer for the eigenfunctions $\{\varphi_j\}_{j \in \mathbb{N}}$, then it is a common critical point, i.e.\ we have $(\nabla \varphi_j)(a) = 0$ for all $j$.
The second observation is that the usual eigenfunction expansion
\begin{equation} \label{eq:neumeig_usualexpansion}
f = \sum_{j=1}^\infty {1 \over \|\varphi_j\|_2^2} \langle f, \varphi_j\rangle \, \varphi_j
\end{equation}
suggests that the point $a$ will also be a critical point of any function $f$ which is sufficiently regular so that the expansion \eqref{eq:neumeig_usualexpansion} is convergent in the pointwise sense and can be differentiated term by term. Thus if we prove that such pointwise convergence and differentiation is admissible for a class of functions whose derivatives are not restricted to vanish at any given point, then the common maximizer property cannot hold. The next proposition and corollary make this rigorous.

\begin{proposition} \label{prop:fltc_unifconvggrad}
Let $E \subset \mathbb{R}^d$ be the closure of a bounded convex domain. Let $\{X_t\}$ be the reflected Brownian motion on $E$ (Example \ref{exam:fltc_feller_boundeddom}), let $\{\omega_j\}_{j \in \mathbb{N}}$ be an orthonormal basis of $L^2(E) \equiv L^2(E,dx)$ consisting of eigenfunctions of the Neumann Laplacian $-\mathcal{G}^{(2)} \equiv -\Delta_N: \mathcal{D}(\Delta_N) \longrightarrow L^2(E)$ and let $0 \leq \lambda_1 \leq \lambda_2 \leq \lambda_3 \leq \ldots$ be the associated eigenvalues. Let $m \in \mathbb{N}$, $m > {d \over 2} + 1$ and let $h \in H^m(E)$ be a function such that $\Delta^k h \in \mathcal{D}(\Delta_N)$ for $k = 0, 1, \ldots, m-1$. Then 
\begin{equation} \label{eq:fltc_unifconvggrad}
h(x) = \sum_{j=0}^\infty \langle h,\omega_j \rangle \, \omega_j(x) \quad \text{ and } \quad (\nabla h)(x) = \sum_{j=0}^\infty \langle h,\omega_j \rangle (\nabla \omega_j)(x) \quad \text{ for all } x \in E,
\end{equation}
where both series converge absolutely and uniformly on $E$. 
\end{proposition}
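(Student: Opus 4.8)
The plan is to reduce both identities in \eqref{eq:fltc_unifconvggrad} to two quantitative inputs: rapid decay of the coefficients $c_j := \langle h, \omega_j\rangle$, and polynomial upper bounds, \emph{uniform in} $x \in E$, for the local spectral sums $\sum_{\lambda_j \le \Lambda}|\omega_j(x)|^2$ and $\sum_{\lambda_j \le \Lambda}|\nabla\omega_j(x)|^2$. A dyadic Cauchy--Schwarz argument then sums the two series, and the limits are identified with $h$ and $\nabla h$ afterwards.

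First, the coefficient decay. The hypothesis $\Delta^k h \in \mathcal{D}(\Delta_N)$ for $k = 0,\ldots,m-1$ says precisely that $h \in \mathcal{D}\bigl((\Delta_N)^m\bigr)$. Since $(-\Delta_N)^m$ is self-adjoint and each eigenfunction satisfies $\omega_j \in \mathcal{D}((\Delta_N)^m)$ with $(-\Delta_N)^m\omega_j = \lambda_j^m \omega_j$, I would move the operator across the inner product to obtain $\lambda_j^m c_j = \langle (-\Delta_N)^m h,\, \omega_j\rangle =: b_j$. Bessel's inequality then gives $\sum_j |b_j|^2 = \|(-\Delta_N)^m h\|_2^2 < \infty$, so that $c_j = \lambda_j^{-m} b_j$ for $\lambda_j > 0$ with $(b_j) \in \ell^2$ (the finitely many $\lambda_j = 0$ contribute a harmless constant term). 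The additional assumption $h \in H^m(E)$ with $m > \tfrac d2 + 1$ is used separately, via the Sobolev embedding $H^m(E) \hookrightarrow \mathrm{C}^1(E)$, to guarantee that $\nabla h$ is a genuine continuous function so that the pointwise gradient identity is meaningful everywhere on $E$.

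Second, the spectral-function bounds, which is where convexity enters. On a bounded Lipschitz domain the Neumann heat semigroup is ultracontractive, so $p_t(x,x) \le C t^{-d/2}$ for $0 < t \le 1$; inserting $t = 1/\Lambda$ into $\sum_j e^{-\lambda_j t}|\omega_j(x)|^2 = p_t(x,x)$ yields $\sum_{\lambda_j\le\Lambda}|\omega_j(x)|^2 \le C'\,\Lambda^{d/2}$ uniformly in $x$. For the gradient version I would invoke the gradient estimate for the Neumann heat kernel valid on \emph{convex} domains, $|\nabla_x p_t(x,y)| \le C\, t^{-(d+1)/2} e^{-c|x-y|^2/t}$; squaring and integrating in $y$ gives
\[
\sum_j e^{-2\lambda_j t}|\nabla\omega_j(x)|^2 \;=\; \int_E |\nabla_x p_t(x,y)|^2\,dy \;\le\; C''\, t^{-(d+2)/2},
\]
whence, again with $t = 1/\Lambda$, $\sum_{\lambda_j\le\Lambda}|\nabla\omega_j(x)|^2 \le C'''\,\Lambda^{(d+2)/2}$ uniformly in $x$. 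Convexity is needed only to secure this last bound: it forces the second fundamental form of $\partial E$ to be nonnegative, which is exactly the curvature condition behind gradient estimates for the reflecting heat semigroup.

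Finally, the summation. Splitting the indices into dyadic blocks $A_k := \{j : 2^k \le \lambda_j < 2^{k+1}\}$ and applying Cauchy--Schwarz within each block,
\[
\sum_{j \in A_k}|c_j|\,|\omega_j(x)| \;\le\; 2^{-km}\Bigl(\sum_{j\in A_k}|b_j|^2\Bigr)^{\!1/2}\Bigl(\sum_{j\in A_k}|\omega_j(x)|^2\Bigr)^{\!1/2} \;\le\; C\, 2^{-k(m-d/4)}\,\beta_k^{1/2},
\]
where $\beta_k := \sum_{j\in A_k}|b_j|^2$ satisfies $\sum_k \beta_k \le \|(-\Delta_N)^m h\|_2^2$. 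A further Cauchy--Schwarz in $k$ bounds the total by a constant times $\bigl(\sum_k 4^{-k(m-d/4)}\bigr)^{1/2}\|(-\Delta_N)^m h\|_2$, which is finite since $m > \tfrac d2 + 1 > \tfrac d4$; the gradient series is handled identically with exponent $(d+2)/2$ in place of $d/2$, converging because $m > \tfrac d4 + \tfrac12$. As all bounds are uniform in $x$, both series converge absolutely and uniformly. The first uniform limit is continuous and also equals $h$ in $L^2$, hence equals $h$; the uniform convergence of the termwise-differentiated series then forces the second limit to be $\nabla h$. The hard part is the gradient spectral bound: establishing a heat-kernel gradient estimate up to the boundary of a merely convex, possibly cornered, domain. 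The on-diagonal and coefficient-decay estimates are soft, but the uniform control of $\nabla_x p_t(x,y)$ near corners is the delicate point, and it is precisely there that convexity is indispensable.
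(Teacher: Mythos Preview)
Your argument is correct and rests on the same two ingredients as the paper's proof: the coefficient decay coming from $h\in\mathcal{D}\bigl((-\Delta_N)^m\bigr)$, and the uniform pointwise control of $\sum_j|\omega_j(x)|^2/(\text{weight})$ and $\sum_j|\nabla\omega_j(x)|^2/(\text{weight})$ derived from the Neumann heat-kernel bound and the Wang--Yan gradient estimate on convex domains. The difference is in the packaging. The paper passes to the resolvent kernel $G_{\alpha,2m}(x,y)=\int_0^\infty e^{-\alpha t}t^{2m-1}p_t(x,y)\,dt/(2m-1)!$, shows from the heat estimates that $G_{\alpha,2m}$ and $\partial_{\vec{\bm v},x}\partial_{\vec{\bm v},y}G_{\alpha,2m}$ are continuous on $E\times E$, and then invokes Mercer's theorem to obtain the absolutely and uniformly convergent expansions $G_{\alpha,2m}(x,x)=\sum_j(\alpha+\lambda_j)^{-2m}|\omega_j(x)|^2$ and $\partial_{\vec{\bm v},x}\partial_{\vec{\bm v},y}G_{\alpha,2m}(x,x)=\sum_j(\alpha+\lambda_j)^{-2m}|\partial_{\vec{\bm v}}\omega_j(x)|^2$; a single Cauchy--Schwarz in $j$ (writing $h=\mathcal{R}_\alpha^m g$) then finishes. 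Your route bypasses Mercer and the resolvent by extracting local Weyl-type bounds $\sum_{\lambda_j\le\Lambda}|\omega_j(x)|^2\lesssim\Lambda^{d/2}$ and $\sum_{\lambda_j\le\Lambda}|\nabla\omega_j(x)|^2\lesssim\Lambda^{(d+2)/2}$ directly from the heat kernel, and then sums by a dyadic Cauchy--Schwarz. Your version is slightly more elementary (no Mercer) at the cost of a marginally longer summation step; the paper's single Cauchy--Schwarz against $G_{\alpha,2m}(x,x)$ is cleaner once Mercer is in hand. In both cases the only hard analytic input is exactly the one you flag: the gradient heat-kernel estimate up to the boundary, for which convexity is essential.
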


\begin{proof}
Let $\{T_t^{(2)}\}_{t \geq 0}$ and $\{\mathcal{R}_\eta^{(2)}\}_{\eta > 0}$ be, respectively, the strongly continuous semigroup and resolvent on $L^2(E)$ generated by the Neumann Laplacian and let $p_t(x,y)$ be the Neumann heat kernel, i.e.\ the transition density of the semigroup $\{T_t^{(2)}\}$. Using the Sobolev embedding theorem \cite[Corollary 6.1]{wloka1987}, one can prove (cf.\ \cite[proof of Theorem 5.2.1]{davies1989}) that the heat kernel is $\mathrm{C}^\infty$ jointly in the variables $(t,x,y) \in (0,\infty) \times E \times E$. Denote by $\partial_{\vec{\bm{v}},x}$ the directional derivative with respect to the variable $x \in \mathbb{R}^d$ in a given direction $\vec{\bm{v}} \in \mathbb{R}^d \setminus \{0\}$. Then there are constants $c_j$ such that the following estimates hold:
\begin{align}
\label{eq:fltc_unifconvggrad_pf1} p_t(x,y) & \leq c_1 t^{-d/2} \exp\biggl(-{|x-y|^2 \over c_2 t}\biggr) \\[4pt]
\label{eq:fltc_unifconvggrad_pf2} |\partial_{\vec{\bm{v}},y} p_t(x,y)| & \leq c_3 t^{-(d+1)/2} \exp\biggl(-{|x-y|^2 \over c_4 t}\biggr).
\end{align}
(The first of these estimates is a basic property of the Neumann heat kernel, see \cite[Theorem 3.1]{basshsu1991}. The second estimate was established in \cite[Lemma 3.1]{wangyan2013}.) Using the basic semigroup identity for the heat kernel, we obtain
\begin{equation} \label{eq:fltc_unifconvggrad_pf3}
|\partial_{\vec{\bm{v}},x} \partial_{\vec{\bm{v}},y} p_t(x,y)| \leq \int_E |\partial_{\vec{\bm{v}},x} p_{t/2}(x,\xi)\, \partial_{\vec{\bm{v}},y} p_{t/2}(\xi,y)| d\xi \leq c_5 t^{-(d+1)} \exp\biggl(-{|x-y|^2 \over 2c_4 t}\biggr).
\end{equation}

Next we recall that \cite[Problem 2.9]{davies1980} \vspace{3pt}
\[
(\mathcal{R}_\alpha^{(2)\!})^k h \equiv (\alpha - \Delta_N)^{-k} h = {1 \over (k-1)!} \int_0^\infty e^{-\alpha t} t^{k-1} \, T_t^{(2)} h\, dt \qquad (h \in L^2(E),\; \alpha > 0,\; k = 1,2,\ldots)
\]
and therefore the $k$-th power $(\mathcal{R}_\alpha^{(2)})^k$ of the resolvent is an integral operator with kernel
\begin{equation} \label{eq:fltc_unifconvggrad_pf4}
G_{\alpha,k}(x,y) = {1 \over (k-1)!} \int_0^\infty e^{-\alpha t} t^{k-1} p_t(x,y) \, dt.
\end{equation}
If $k=2m > d+1$, then using the estimate \eqref{eq:fltc_unifconvggrad_pf1} we see that $G_{\alpha,2m}(x,x) < \infty$ and, furthermore, $G_{\alpha,2m}$ is a continuous function of $(x,y) \in E \times E$. Since $E$ is compact and $(\mathcal{R}_\alpha^{(2)\!})^{2m}: L^2(E) \longrightarrow L^2(E)$ is nonnegative and has a continuous kernel, an application of Mercer's theorem  \cite[Theorem 3.11.9]{simon2015} yields that the kernel $G_{\alpha,2m}$ can be represented by the spectral expansion
\begin{equation} \label{eq:fltc_unifconvggrad_pf5}
G_{\alpha,2m}(x,y) = \sum_{j=1}^\infty {\omega_j(x) \omega_j(y) \over (\alpha + \lambda_j)^{2m}}
\end{equation}
where the series converges absolutely and uniformly in $(x,y) \in E \times E$. (Note that $\bigl((\alpha + \lambda_j)^{-2m}, \omega_j\bigr)$ are the eigenvalue-eigenfunction pairs for $\mathcal{R}_\alpha^{(2)\!}$.) In addition, it follows from \eqref{eq:fltc_unifconvggrad_pf4} and the estimates \eqref{eq:fltc_unifconvggrad_pf2}--\eqref{eq:fltc_unifconvggrad_pf3} that
\[
\partial_{\vec{\bm{v}},x} \partial_{\vec{\bm{v}},y} G_{\alpha,2m}(x,y) = {1 \over (2m-1)!}\int_0^\infty e^{-\alpha t} t^{2m-1} \partial_{\vec{\bm{v}},x} \partial_{\vec{\bm{v}},y} p_t(x,y) \, dt
\]
where the integral converges absolutely and uniformly and defines a continuous function of $(x,y) \in E \times E$. (The function $\partial_{\vec{\bm{v}},y} G_{\alpha,2m}$ is also continuous on $E \times E$.) Using standard arguments (cf.\ \cite[\S21.2, proof of Corollary 3]{naimark1968}), one can then deduce from \eqref{eq:fltc_unifconvggrad_pf5} that 
\begin{equation} \label{eq:fltc_unifconvggrad_pf6}
\partial_{\vec{\bm{v}},x} \partial_{\vec{\bm{v}},y} G_\alpha^{(2m)\mskip-0.8\thinmuskip}(x,y) = \sum_{j=1}^\infty {(\partial_{\vec{\bm{v}}}\omega_j)(x) (\partial_{\vec{\bm{v}}}\omega_j)(y) \over (\alpha + \lambda_j)^{2m}}
\end{equation}
again with absolute and uniform convergence in $(x,y) \in E \times E$.

Let $h \in H^m(E)$ be such that $\Delta^k h \in \mathcal{D}(\Delta_N)$ for $k = 0, 1, \ldots, m-1$, and write $h = \mathcal{R}_\alpha^m g$ where $g := (\alpha - \Delta_N)^m h \in L^2(E)$. Since $m > {d \over 2} + 1$, we have $h \in \mathrm{C}^1(E)$ by the Sobolev embedding theorem. We thus have
\begin{align*}
\sum_{j=0}^\infty \bigl| \langle h,\omega_j \rangle \, \omega_j(x) \bigr|
& = \sum_{j=0}^\infty {|\langle g,\omega_j \rangle| \over (\alpha + \lambda_j)^{m}} |\omega_j(x)| \\
& \leq \biggl(\sum_{j=0}^\infty |\langle g,\omega_j \rangle|^2 \biggr)^{\!{1 \over 2}} \ccdot \biggl(\sum_{j=0}^\infty {|\omega_j(x)|^2 \over (\alpha + \lambda_j)^{2m}} \biggr)^{\!{1 \over 2}} \\
& = \|g\| \ccdot \bigl(G_\alpha^{(2m)\mskip-0.8\thinmuskip}(x,x)\bigr)^{1/2} < \infty
\end{align*}
and similarly
\[
\sum_{j=0}^\infty \bigl| \langle h,\omega_j \rangle (\partial_{\vec{\bm{v}}} \omega_j)(x) \bigr| \leq  \|g\| \ccdot \bigl|\partial_{\vec{\bm{v}},x} \partial_{\vec{\bm{v}},y} G_\alpha^{(2m)\mskip-0.8\thinmuskip}(x,x) \bigr|^{1/2} < \infty.
\]
This shows that the series in the right-hand sides of \eqref{eq:fltc_unifconvggrad} converge absolutely and uniformly in $x \in E$, and the result immediately follows.
\end{proof}

\begin{corollary}[Nonexistence of common critical points] \label{cor:fltc_nocommoncrit_smooth}
Let $m \in \mathbb{N}$, $m > {d \over 2} + 1$ and let $E \subset \mathbb{R}^d$ ($d \geq 2$) be the closure of a bounded convex domain with $\mathrm{C}^{2m+2}$ boundary $\partial E$. Let $\{\omega_j\}_{j \in \mathbb{N}}$ be an orthonormal basis of $L^2(E)$ consisting of eigenfunctions of $-\Delta_N$. Then for each $x_0 \in E$ there exists $j \in \mathbb{N}$ such that $(\nabla \omega_j)(x_0) \neq 0$.
\end{corollary}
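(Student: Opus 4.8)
The plan is to argue by contradiction. Suppose there is a point $x_0 \in E$ at which every eigenfunction has vanishing gradient, i.e.\ $(\nabla\omega_j)(x_0) = 0$ for all $j$, and derive a contradiction from Proposition \ref{prop:fltc_unifconvggrad}. The guiding idea is that the term-by-term differentiated expansion \eqref{eq:fltc_unifconvggrad} would force $(\nabla h)(x_0) = 0$ for \emph{every} admissible $h$, whereas one can produce an admissible function whose directional derivative at $x_0$ is nonzero by a direct, kernel-based computation that does not pass through the eigenfunction expansion. It is precisely this intrinsic computation that breaks the apparent circularity (any construction routed through the $\omega_j$ would inherit the vanishing and give no contradiction).

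First I would fix a smooth $\phi\in\mathrm{C}^\infty(E)$ and a direction $v\in\mathbb{R}^d\setminus\{0\}$ with $(\partial_v\phi)(x_0)\neq0$; when $x_0\in\partial E$ I take $v$ tangent to $\partial E$, which is possible exactly because $d\geq2$ (this is where that hypothesis enters, mirroring the fact that in one dimension the endpoints genuinely \emph{are} common maximizers). For each $t>0$ set $h_t:=T_t^{(2)}\phi$. Since $\{T_t^{(2)}\}$ is the analytic Neumann heat semigroup, $h_t\in\mathcal{D}((\Delta_N)^m)$ for every $t>0$; by elliptic regularity on the $\mathrm{C}^{2m+2}$ domain this yields $h_t\in H^{2m}(E)\subset H^m(E)$ together with $\Delta^k h_t\in\mathcal{D}(\Delta_N)$ for $k=0,\ldots,m-1$, so $h_t$ satisfies the hypotheses of Proposition \ref{prop:fltc_unifconvggrad}. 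Applying \eqref{eq:fltc_unifconvggrad} and using $\langle h_t,\omega_j\rangle=e^{-\lambda_j t}\langle\phi,\omega_j\rangle$, the contradiction hypothesis gives
\[
(\partial_v h_t)(x_0) = \sum_{j=0}^\infty e^{-\lambda_j t}\langle\phi,\omega_j\rangle\,(\partial_v\omega_j)(x_0) = 0 \qquad\text{for every } t>0.
\]

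The crux is then to evaluate $\lim_{t\downarrow0}(\partial_v h_t)(x_0)$ \emph{directly}, using the heat-kernel bounds \eqref{eq:fltc_unifconvggrad_pf1}--\eqref{eq:fltc_unifconvggrad_pf2} instead of the spectral sum. Writing $(\partial_v h_t)(x_0)=\int_E\partial_{v,x}p_t(x_0,y)\,\phi(y)\,dy$ and exploiting conservativeness ($\int_E p_t(x_0,y)\,dy\equiv1$, whence $\int_E\partial_{v,x}p_t(x_0,y)\,dy=0$), one recenters the integrand at $\phi(x_0)$ and expands $\phi$ to first order. The Gaussian estimate \eqref{eq:fltc_unifconvggrad_pf1} shows the region $\{|y-x_0|>\delta\}$ contributes $O(e^{-c/t})$, so only a shrinking neighbourhood of $x_0$ matters; on that scale the reflecting kernel is, up to controllable errors, the half-space (or free) Gaussian, and a short moment computation gives $\lim_{t\downarrow0}(\partial_v h_t)(x_0)=(\partial_v\phi)(x_0)$ at interior points and the tangential derivative $(\partial_\tau\phi)(x_0)$ at boundary points (there the reflected image supplies exactly the factor compensating for integration over a half-neighbourhood). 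In either case the limit equals $(\partial_v\phi)(x_0)\neq0$, contradicting the vanishing displayed above; hence no such $x_0$ exists.

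I expect the boundary case of this limit to be the main obstacle. At an interior $x_0$ one may instead take $\phi\in\mathrm{C}_\mathrm{c}^\infty(\mathring E)$, which is itself admissible, and read off the contradiction $\nabla\phi(x_0)=\sum_j\langle\phi,\omega_j\rangle\nabla\omega_j(x_0)=0$ with no limit at all. But a compactly supported test function has vanishing gradient on $\partial E$, so for $x_0\in\partial E$ one is forced to smooth a non-admissible $\phi$ by the semigroup and to control the short-time behaviour of $(\partial_\tau T_t\phi)(x_0)$ up to the boundary. This is where the $\mathrm{C}^{2m+2}$ regularity (needed for \eqref{eq:fltc_unifconvggrad_pf2} to hold and for the kernel to be smooth up to $\partial E$) and the convexity of $E$ (under which \eqref{eq:fltc_unifconvggrad_pf2} was established) are essential, and where the non-circularity of the argument rests on the short-time limit being read off from the kernel alone, independently of the eigenfunctions.
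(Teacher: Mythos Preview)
Your interior case is exactly the paper's: pick $h\in\mathrm{C}_\mathrm{c}^\infty(\mathring E)$ with $(\nabla h)(x_0)\neq0$ and apply Proposition~\ref{prop:fltc_unifconvggrad} directly.

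For the boundary case you take a genuinely different route. The paper does \emph{not} regularize by the semigroup or compute a short-time limit; instead it invokes the \emph{inverse trace theorem} for Sobolev spaces (Wloka, Theorem~8.8): given $\varphi\in\mathrm{C}^\infty(\partial E)$ with nonzero tangential derivative at $x_0$, one obtains in one stroke an $h\in H^{2m}(E)$ with $h|_{\partial E}=\varphi$ and $\mathrm{Tr}_{\partial E}(\partial^j h/\partial\bm{n}^j)=0$ for $j=1,\ldots,2m-1$. These higher-order normal trace conditions force $h,\Delta h,\ldots,\Delta^{m-1}h\in\mathcal{D}(\Delta_N)$, so $h$ is admissible for Proposition~\ref{prop:fltc_unifconvggrad} \emph{as built}, and $(\nabla h)(x_0)\neq0$ follows from the prescribed boundary trace. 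No limit is needed; the contradiction is immediate.

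Your semigroup-regularization idea is sound in principle, and your half-space heuristic for $\lim_{t\downarrow0}(\partial_\tau T_t\phi)(x_0)=(\partial_\tau\phi)(x_0)$ is correct (a direct Gaussian moment computation confirms it on the model half-space). But turning that heuristic into a proof on a curved $\mathrm{C}^{2m+2}$ boundary requires a short-time parametrix comparison of the Neumann kernel with the half-space kernel, with error control in the \emph{gradient}; the bounds \eqref{eq:fltc_unifconvggrad_pf1}--\eqref{eq:fltc_unifconvggrad_pf2} alone do not give you the asymptotic identification, only uniform integrability. You flag this honestly as the main obstacle, and it is one: what you have is a plausible sketch, not a proof. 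The inverse trace theorem bypasses the whole issue and is the cleaner tool here; it is also where the $\mathrm{C}^{2m+2}$ regularity of $\partial E$ is actually spent in the paper's argument.
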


\begin{proof}
If $x_0 \in \mathring{E}$, it is clearly possible to choose $h \in \mathrm{C}_\mathrm{c}^\infty(E) \subset \{ u \in H^m(E) \mid u, \Delta u, \ldots, \Delta^{m-1}u \in \mathcal{D}(\Delta_N)\}$ such that $(\nabla h)(x_0) \neq 0$. If $x_0$ belongs to $\partial E$, let $\vec{\bm{v}} \in T_{\partial E} (x_0) \setminus \{0\}$ and choose $\varphi \in \mathrm{C}^\infty(\partial E)$ such that $d\varphi_{x_0}(\vec{\bm{v}}) \neq 0$. Combining the inverse trace theorem for Sobolev spaces \cite[Theorem 8.8]{wloka1987} with the Sobolev embedding theorem, we find that that there exists $h \in H^{2m}(E) \subset \mathrm{C}^1(E)$ such that
\[
h\restrict{\partial E} = \varphi \qquad \text{ and } \qquad \mathrm{Tr}_{\raisebox{-1pt}{\footnotesize$\partial E$}} \Bigl({\partial^j h \over \partial \bm{n}^j}\Bigr) = 0, \quad j=1,2,\ldots,2m-1
\]
where $\bm{n}$ denotes the unit outer normal vector orthogonal to $\partial E$. Consequently, $h$ is such that $(\nabla h)(x_0) \neq 0$ and $h, \Delta h, \ldots, \Delta^{m-1}h \in \mathcal{D}(\Delta_N) = \bigl\{u \in H^2(E) \bigm| \mathrm{Tr}_{\raisebox{-1pt}{\footnotesize$\partial E$}} \bigl({\partial h \over \partial \bm{n}}\bigr) = 0 \bigr\}$. (This characterization of $\mathcal{D}(\Delta_N)$ is well-known, see \cite[Section 10.6.2]{schmudgen2012}.) Therefore, given any $x_0 \in E$ we can apply Proposition \ref{prop:fltc_unifconvggrad} to the function $h$ defined above to conclude that 
\[
\sum_{j=0}^\infty \langle h,\omega_j \rangle (\nabla \omega_j)(x_0) = (\nabla h)(x_0) \neq 0,
\]
which implies that $(\nabla \omega_j)(x_0) \neq 0$ for at least one $j$.
\end{proof}

The conclusions of Proposition \ref{prop:fltc_unifconvggrad} and Corollary \ref{cor:fltc_nocommoncrit_smooth} are also valid for the eigenfunctions of the Laplace-Beltrami operator on a compact Riemannian manifold (Example \ref{exam:fltc_feller_riemannian}):

\begin{proposition}[Nonexistence of common critical points on compact Riemannian manifolds] \label{prop:fltc_nocommoncrit_riemannian}
Let $(E,g)$ be a compact Riemannian manifold (without boundary) of dimension $d$ and $\{\omega_j\}_{j \in \mathbb{N}}$ an orthonormal basis of $L^2(E,\mb{m})$ consisting of eigenfunctions of the Laplace-Beltrami operator on $(E,g)$. Then \eqref{eq:fltc_unifconvggrad} holds for all functions $h \in H^{2m}(E) := \{u \mid u, \Delta u, \ldots, \Delta^m u \in L^2(E,\mb{m})\}$\, ($m \in \mathbb{N},\, m > {d \over 4} + {1 \over 2}$), with the series converging absolutely and uniformly. Furthermore, for each $x_0 \in E$ there exists $j \in \mathbb{N}$ such that $(\nabla \omega_j)(x_0) \neq 0$.
\end{proposition}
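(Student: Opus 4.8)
The plan is to transcribe the architecture of the proof of Proposition~\ref{prop:fltc_unifconvggrad}, replacing the Neumann heat kernel on a domain by the heat kernel $p_t(x,y)$ of the Brownian motion on $(E,g)$ and exploiting the fact that, since $E$ has no boundary, no trace or boundary-condition bookkeeping is required. First I would record the analytic input on the heat kernel: on a compact manifold $p_t(x,y)$ is jointly smooth on $(0,\infty)\times E\times E$ and satisfies, for $0<t\le 1$, a Gaussian upper bound $p_t(x,y)\le c_1 t^{-d/2}\exp(-\mb{d}(x,y)^2/(c_2 t))$ together with a Li--Yau type gradient estimate $|\nabla_y p_t(x,y)|_g\le c_3 t^{-(d+1)/2}\exp(-\mb{d}(x,y)^2/(c_4 t))$; these are available because compactness forces a lower Ricci bound (for $t\ge 1$ the factor $e^{-\alpha t}$ appearing below absorbs everything). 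Exactly as in \eqref{eq:fltc_unifconvggrad_pf3}, combining the gradient estimate with the Chapman--Kolmogorov identity $\partial_{\vec{\bm v},x}\partial_{\vec{\bm v},y}p_t(x,y)=\int_E \partial_{\vec{\bm v},x}p_{t/2}(x,\xi)\,\partial_{\vec{\bm v},y}p_{t/2}(\xi,y)\,\mb{m}(d\xi)$ and integrating out the Gaussian (which contributes a factor $t^{d/2}$) yields the \emph{sharp} second-order bound $|\partial_{\vec{\bm v},x}\partial_{\vec{\bm v},y}p_t(x,y)|\le c_5\, t^{-(d/2+1)}\exp(-\mb{d}(x,y)^2/(2c_4 t))$.

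Second, I would introduce the $L^2$-resolvent $\mathcal R_\alpha^{(2)}=(\alpha-\Delta)^{-1}$ and the integral kernel $G_{\alpha,2m}(x,y)=\frac{1}{(2m-1)!}\int_0^\infty e^{-\alpha t}t^{2m-1}p_t(x,y)\,dt$ of its power $(\mathcal R_\alpha^{(2)})^{2m}$. The threshold $m>\frac d4+\frac12$, i.e.\ $2m>\frac d2+1$, is precisely what makes the diagonal integral $\int_0^\infty e^{-\alpha t}t^{2m-1}t^{-(d/2+1)}\,dt$ converge, so that both $G_{\alpha,2m}$ and $\partial_{\vec{\bm v},x}\partial_{\vec{\bm v},y}G_{\alpha,2m}$ are finite and continuous on $E\times E$. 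Since $(\mathcal R_\alpha^{(2)})^{2m}$ is a nonnegative operator with continuous kernel on the compact space $E$, Mercer's theorem \cite[Theorem~3.11.9]{simon2015} gives the absolutely and uniformly convergent expansion $G_{\alpha,2m}(x,y)=\sum_j(\alpha+\lambda_j)^{-2m}\omega_j(x)\omega_j(y)$; differentiating under the time integral (licensed by the sharp kernel bound) and then term by term (by the Naimark-type argument of \cite[\S21.2]{naimark1968}) produces $\partial_{\vec{\bm v},x}\partial_{\vec{\bm v},y}G_{\alpha,2m}(x,y)=\sum_j(\alpha+\lambda_j)^{-2m}(\partial_{\vec{\bm v}}\omega_j)(x)\,(\partial_{\vec{\bm v}}\omega_j)(y)$, again uniformly. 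Writing any $h\in H^{2m}(E)$ as $h=(\mathcal R_\alpha^{(2)})^{m}g$ with $g=(\alpha-\Delta)^m h\in L^2(E,\mb m)$, the Cauchy--Schwarz estimates $\sum_j|\langle h,\omega_j\rangle\,\omega_j(x)|\le\|g\|\,G_{\alpha,2m}(x,x)^{1/2}$ and $\sum_j|\langle h,\omega_j\rangle\,(\partial_{\vec{\bm v}}\omega_j)(x)|\le\|g\|\,|\partial_{\vec{\bm v},x}\partial_{\vec{\bm v},y}G_{\alpha,2m}(x,x)|^{1/2}$ give the absolute and uniform convergence of both series in \eqref{eq:fltc_unifconvggrad}; that their sums are $h$ and $\nabla h$ follows from the $\mathrm C^1$ regularity of $h$ (from $2m>\frac d2+1$ and Sobolev embedding) together with the classical theorem on term-by-term differentiation of uniformly convergent series.

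Finally, for the nonexistence of common critical points I would fix $x_0\in E$ and simply choose $h\in\mathrm C^\infty(E)$ with $(\nabla h)(x_0)\ne 0$. On a boundaryless compact manifold this is immediate (take a coordinate function times a bump supported near $x_0$), and any such $h$ lies in $H^{2m}(E)$ because all its powers $\Delta^k h$ are smooth, hence in $L^2(E,\mb m)$. Applying the gradient expansion established above gives $\sum_j\langle h,\omega_j\rangle(\nabla\omega_j)(x_0)=(\nabla h)(x_0)\ne 0$, so $(\nabla\omega_j)(x_0)\ne 0$ for at least one $j$, as claimed. The main obstacle is the analytic input of the first step: securing the sharp bound $t^{-(d/2+1)}$ for the mixed second derivative uniformly on all of $E\times E$ (rather than the cruder $t^{-(d+1)}$ used in the Euclidean argument), since it is exactly this sharpness that matches the threshold $m>\frac d4+\frac12$ in the statement. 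Once the kernel estimates are in hand the rest is a routine adaptation of the proof of Proposition~\ref{prop:fltc_unifconvggrad}, and is in fact cleaner, because the absence of a boundary dispenses with the inverse-trace construction used in Corollary~\ref{cor:fltc_nocommoncrit_smooth} and lets us test against arbitrary smooth functions.
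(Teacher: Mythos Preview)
Your proposal is correct and follows essentially the same architecture as the paper's proof: heat-kernel and gradient estimates, Laplace transform to the resolvent kernel $G_{\alpha,2m}$, Mercer expansion and term-by-term differentiation, Cauchy--Schwarz to obtain the uniform convergence of \eqref{eq:fltc_unifconvggrad}, and then testing against a smooth $h$ with $(\nabla h)(x_0)\neq 0$. In one respect you are more careful than the paper: you explicitly derive the \emph{sharp} mixed-derivative bound $|\partial_{\vec{\bm v},x}\partial_{\vec{\bm v},y}p_t|\lesssim t^{-(d/2+1)}$ by actually integrating the Gaussian convolution (gaining a factor $t^{d/2}$), whereas the paper simply writes ``arguing as in the proof of Proposition~\ref{prop:fltc_unifconvggrad}'' --- and the crude bound $t^{-(d+1)}$ displayed in \eqref{eq:fltc_unifconvggrad_pf3} would only yield $2m>d+1$, not the stated threshold $2m>\tfrac{d}{2}+1$; your observation is precisely what closes that gap.
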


\begin{proof}
We know from \cite[Theorem 5.2.1]{davies1989} that the heat kernel $p_t(x,y)$ for the Laplace-Beltrami operator is $\mathrm{C}^\infty$ jointly in the variables $(t,x,y) \in(0,\infty) \times E \times E$. In addition, the heat kernel $p_t(x,y)$ and its gradient satisfy, for $0 < t \leq 1$ and $x, y \in E$, the upper bounds
\[
p_t(x,y) \leq c_1 t^{-d/2} \exp\biggl( -{\mb{d}(x,y)^2 \over c_2 t} \biggr), \qquad |\nabla_y p_t(x,y)| \leq c_3 t^{-(d+1)/2} \exp\biggl( -{\mb{d}(x,y)^2 \over c_4 t} \biggr)
\]
where $\mb{d}$ is the Riemannian distance function. (For the proof see \cite{hsu1999} and \cite[Corollary 15.17]{grigoryan2009}.) Let $U \subset E$ be a coordinate neighbourhood. Arguing as in the proof of Proposition \ref{prop:fltc_unifconvggrad}, we find that for $x,y \in U$ the kernel of the $2m$-th power of the resolvent admits the spectral representation \eqref{eq:fltc_unifconvggrad_pf5} and can be differentiated term by term as in \eqref{eq:fltc_unifconvggrad_pf6}. (The directional derivatives are defined in local coordinates.) We know that for $m > {d \over 4} + {1 \over 2}$ the Sobolev embedding $H^{2m}(E) \subset \mathrm{C}^1(E)$ holds on the Riemannian manifold $E$ \cite[Theorem 7.1]{grigoryan2009}; therefore, the estimation carried out above yields that the expansions \eqref{eq:fltc_unifconvggrad} hold. We have $\partial E = \emptyset$, thus for each $x_0 \in E$ we can choose $h \in \mathrm{C}^\infty(E)$ such that $(\nabla h)(x_0) \neq 0$. As in the proof of Corollary \ref{cor:fltc_nocommoncrit_smooth} it follows that $(\nabla \omega_j)(x_0) \neq 0$ for at least one $j$.
\end{proof}

As noted above, the existence of a common critical point is a necessary condition for the common maximizer property to hold; in turn, this is (under the assumption that the spectrum is simple, cf.\ Corollary \ref{cor:fltc_prodform_simplspectr_iff}) a necessary condition for the existence of an FLTC. Therefore, the following nonexistence theorem is a direct consequence of the preceding results.

\begin{theorem} \label{thm:fltc_nonexistence}
Let $\{T_t\}_{t \geq 0}$ be either the Feller semigroup on a bounded domain $E \subset \mathbb{R}^d$ with $\mathrm{C}^{2m+2}$ boundary ($m > {d \over 2} + 1$) associated with the reflected Brownian motion on $E$ or the Feller semigroup associated with the Brownian motion on a compact Riemannian manifold. Assume that the operator $T_1^{(2)}$ has simple spectrum. Then there exists no FLTC for the semigroup $\{T_t\}$.
\end{theorem}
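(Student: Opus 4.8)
The plan is to argue by contradiction, combining the necessary ``common maximizer'' condition for an FLTC with the nonexistence of common critical points already established in Corollary~\ref{cor:fltc_nocommoncrit_smooth} and Proposition~\ref{prop:fltc_nocommoncrit_riemannian}. Thus the theorem should reduce to an assembly of earlier results, with essentially one short geometric observation left to supply.

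First I would verify that both semigroups fall under the hypotheses of part~(a) of Proposition~\ref{prop:fltc_trivTheta_L2basis}, and hence of Proposition~\ref{prop:fltc_prodform_kerncharact} and Corollary~\ref{cor:fltc_prodform_simplspectr_iff}. This is exactly the content of the remark following Proposition~\ref{prop:fltc_trivTheta_L2basis}: for the reflected Brownian motion on a bounded (convex) domain and for the Brownian motion on a compact Riemannian manifold one has $p_{t,x}(dy) = p_t(x,y)\,\mb{m}(dy)$ with $p_t$ bounded and symmetric, and the reference measure $\mb{m}$ (Lebesgue measure, resp.\ Riemannian volume) is finite because $E$ is compact. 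Since $T_1^{(2)}$ is assumed to have simple spectrum, Corollary~\ref{cor:fltc_prodform_simplspectr_iff} is available, with $\{(\lambda_j,\omega_j)\}_{j \in \mathbb{N}}$ the simple eigenvalue--eigenfunction pairs of the Neumann Laplacian (resp.\ Laplace--Beltrami operator).

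Next I would suppose, for contradiction, that an FLTC exists for $\{T_t\}$. By the implication (i)$\implies$(ii) of Corollary~\ref{cor:fltc_prodform_simplspectr_iff}, there is then a single point $a \in E$ with $|\omega_j(a)| = \|\omega_j\|_\infty$ for every $j$; that is, $a$ is a common maximizer of the moduli of all eigenfunctions. The one step requiring argument is to upgrade this to a \emph{common critical point}, $(\nabla \omega_j)(a) = 0$ for all $j$. After normalizing the sign so that $\omega_j(a) = \|\omega_j\|_\infty > 0$, the point $a$ is a global maximum of $\omega_j$ on $E$. If $a$ lies in the interior of $E$ (always the case on the manifold, which has no boundary) this is the standard first-order extremum condition. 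If $a \in \partial E$ in the domain case, the Neumann boundary condition forces $\partial \omega_j/\partial \bm{n}(a) = 0$, while maximality of $\omega_j$ over $E \supset \partial E$ forces the tangential gradient at $a$ to vanish as well; together these give $(\nabla \omega_j)(a) = 0$.

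This would contradict Corollary~\ref{cor:fltc_nocommoncrit_smooth} in the domain case and Proposition~\ref{prop:fltc_nocommoncrit_riemannian} in the manifold case: both assert that for every $x_0 \in E$ there is at least one index $j$ with $(\nabla \omega_j)(x_0) \neq 0$, and applying this at $x_0 = a$ is incompatible with $a$ being a common critical point. Hence no FLTC can exist. I expect the main obstacle to have been cleared already before reaching this theorem, namely the term-by-term differentiability of the eigenfunction expansion in Proposition~\ref{prop:fltc_unifconvggrad} (resting on the Gaussian heat-kernel gradient bounds) and the ensuing nonexistence of common critical points; within the present argument the only delicate point is the boundary case of the ``common maximizer $\implies$ common critical point'' implication, handled above via the Neumann condition.
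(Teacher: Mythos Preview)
Your proposal is correct and follows essentially the same route as the paper: the paper states that the theorem is a direct consequence of the chain ``FLTC $\implies$ common maximizer (via Corollary~\ref{cor:fltc_prodform_simplspectr_iff}, using simple spectrum) $\implies$ common critical point $\implies$ contradiction with Corollary~\ref{cor:fltc_nocommoncrit_smooth} / Proposition~\ref{prop:fltc_nocommoncrit_riemannian}''. You actually spell out the boundary case of the middle implication (Neumann condition kills the normal derivative, maximality along $\partial E$ kills the tangential gradient) more explicitly than the paper does.
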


This theorem is not applicable to regular polygons and other domains which are invariant under reflection or rotation (i.e.\ under the natural action of a dihedral group), as this invariance enforces the presence of eigenvalues with multiplicity greater than $1$. On the other hand, we know that the eigenspaces on such symmetric domains can be associated to the different symmetry subspaces of the irreducible representations of the dihedral group \cite{helfferetal2002}. In most cases, the multiplicity of all the eigenspaces corresponding to the one-dimensional irreducible representations is equal to $1$ \cite{marrocospereira2015}; therefore, an adaptation of the proofs presented above should allow us to establish the nonexistence of common critical points among the eigenfunctions associated to the one-dimensional eigenspaces.

The nonexistence theorem established above strongly depends on the discreteness of the spectrum of the generator of the Feller process. Extending Theorem \ref{thm:fltc_nonexistence} to Brownian motions on unbounded domains on $\mathbb{R}^d$ or on noncompact Riemannian manifolds is a challenging problem, as these diffusions generally have a nonempty continuous spectrum. We leave this topic for future research.

\section*{Acknowledgements}

The first and third authors were partially supported by CMUP, which is financed by national funds through FCT – Fundação para a Ciência e a Tecnologia, I.P., under the project with reference\linebreak UIDB/00144/2020. The first author was also supported by the grant PD/BD/135281/2017, under the FCT PhD Programme UC|UP MATH PhD Program. The second author was partially supported by the project CEMAPRE/REM – UIDB/05069/2020 – financed by FCT through national funds.

\linespread{1.125}
\renewcommand{\bibname}{References} 
\begin{small}

\end{small}

\end{document}